\numberwithin{equation}{section}
\numberwithin{figure}{section}
\newtheorem{theorem}{Theorem}[section]
\newtheorem{lemma}[theorem]{Lemma}
\newtheorem{proposition}[theorem]{Proposition}
\newtheorem{corollary}[theorem]{Corollary}
\newtheorem{remark}[theorem]{Remark}
\theoremstyle{definition}
\newtheorem{definition}[theorem]{Definition}
\newcommand{\C}{{\mathbb{C}}}
\newcommand{\Z}{{\mathbb{Z}}}
\newcommand{\Q}{{\mathbb{Q}}}
\newcommand{\QQ}{{\mathbb{Q}}}
\newcommand{\R}{{\mathbb{R}}}
\newcommand{\PP}{{\mathbb{P}}}
\newcommand{\sig}{\rm sig\mbox{ }}
\newcommand{\into}{\hookrightarrow}
\newcommand{\calL}{\mathcal{L}}
\newcommand{\CC}{{\mathbb C}}
\newcommand{\ZZ}{{\mathbb Z}}
\newcommand{\poly}{{\rm poly}}
\newcommand{\xxi}{\xi}
\newcommand{\zzeta}{\zeta}
\DeclareMathOperator{\Ker}{Ker}
\DeclareMathOperator{\pt}{pt}
\DeclareMathOperator{\Thom}{Thom}
\DeclareMathOperator{\Todd}{Todd}
\DeclareMathOperator{\im}{Im}
\newcommand{\St}{{\rm St}}
\DeclareMathOperator{\ch}{ch}
\newcommand{\chG}{\mathrm{ch}_G}
\newcommand{\chT}{\mathrm{ch}_T}
\newcommand{\hp}{{\pi}}
\begin{document}

\title{The product structure of the equivariant $K$-theory of the based loop group of $SU(2)$}

\author{Megumi Harada}
\address{Department of Mathematics and
Statistics\\ McMaster University\\ 
Hamilton, Ontario L8S4K1\\ Canada}
\email{Megumi.Harada@math.mcmaster.ca}
\urladdr{\url{http://www.math.mcmaster.ca/Megumi.Harada/}}
\thanks{All authors are partially supported by NSERC Discovery Grants. 
The first author is additionally partially supported by
an NSERC University Faculty Award, and an Ontario Ministry of Research
and Innovation Early Researcher Award.}

\author{Lisa C. Jeffrey}
\address{Department of Mathematics \\
University of Toronto \\ Toronto, Ontario \\ Canada}
\email{jeffrey@math.toronto.edu}
\urladdr{\url{http://www.math.toronto.edu/~jeffrey}} 

\author{Paul Selick} 
\address{Department of Mathematics\\ 
University of Toronto\\ Toronto, Ontario \\ Canada} 
\email{selick@math.toronto.edu} 
\urladdr{\url{http://www.math.toronto.edu/~selick}}

\keywords{equivariant $K$-theory, Lie group, loop group, based loop spaces} 
\subjclass[2000]{Primary: 55N15; Secondary: 22E67}

\date{\today}


\begin{abstract}
Let $G=SU(2)$ and let $\Omega G$ denote the space of continuous based
loops in $G$, equipped with the pointwise conjugation action of
$G$. It is a classical fact in topology that the ordinary cohomology
$H^*(\Omega G)$ is a divided polynomial algebra $\Gamma[x]$. 
The algebra $\Gamma[x]$ can be described as an inverse limit as $k \to
\infty$ of the symmetric subalgebra in $\Lambda(x_1, \ldots,x_k)$
where $\Lambda(x_1, \ldots, x_k)$ is the usual exterior algebra in the
variables $x_1, \ldots, x_k$. We compute the $R(G)$-algebra structure
of the $G$-equivariant $K$-theory $K^*_G(\Omega G)$ which naturally
generalizes the classical computation of $H^*(\Omega G)$ as
$\Gamma[x]$. Specifically, we prove that $K^*_G(\Omega G)$ is an
inverse limit of the symmetric ($S_{2r}$-invariant) subalgebra
$(K^*_G((\PP^1)^{2r}))^{S_{2r}}$ of $K^*_G((\PP^1)^{2r})$, where the
symmetric group $S_{2r}$ acts in the natural way on the factors of the
product $(\PP^1)^{2r}$ and $G$ acts diagonally via the standard action
on each factor. 
\end{abstract}

\maketitle

\setcounter{tocdepth}{1}
\tableofcontents

\section{Introduction}\label{sec:intro}

Let $G$ be a compact connected Lie group and consider the conjugation
action of $G$ on itself. Let $\Omega G$ denote the space of continuous based
loops in $G$, equipped with the pointwise conjugation action of
$G$. 
The ordinary and $G$-equivariant cohomology
rings $H^*(G)$, $H^*(\Omega G)$, $H_G^*(G)$, and $H_G^*(\Omega G)$
were computed decades ago (with contributions from many people), and
these results are by now considered classical; the same is true of the
computations of the ordinary $K$-theory rings $K^*(G)$ and~$K^*(\Omega
G)$. However, somewhat surprisingly, the $G$-equivariant $K$-theory
computation was not addressed in the literature until quite
recently. (For instance, it was only in 2000 that Brylinski and Zhang
computed $K^*_G(G)$ \cite{BryZha00}.) 

The main contribution of this manuscript is a concrete computation of
the $K^*_G$-algebra structure of $K^*_G(\Omega G)$ for
the specific case $G =SU(2)$. (We addressed the additive,
i.e. $K^*_G$-module, structure of $K^*_G(\Omega G)$ in our 
companion paper \cite{HJS12}.) 
More specifically, we describe this product structure in a concrete
manner which is a straightforward and pleasant generalization of the
classical fact 
that the ordinary 
cohomology ring $H^*(\Omega G)$ is a divided polynomial algebra
$\Gamma[x]$. To make this more precise, let us briefly recall several
descriptions (some quite familiar, and some perhaps not so) of
$\Gamma[x]$. 
Firstly, $\Gamma[x]$ can be described as the group additively
generated by elements labelled $\gamma_k(x)$ (with degree $k|x|=2k$)
satisfying the multiplicative relations 
\[
\gamma_i(x) \gamma_j(x) = \binom{i+j}{i} \gamma_{i+j}(x).
\]
Alternatively, $\Gamma[x]$ is the subring of $\Q[x]$ generated by the
elements $\{\frac{x^k}{k!}\}$ for varying $k$. A third description is
as the Hopf algebra dual of $\Z[x]$. Finally, yet one more description
is obtained by observing that 
\[
\Gamma[x]/\{\gamma_j(x): j > k\} \cong
\bigl(\Lambda(x_1, \ldots, x_k)\bigr)^{S_k}\cong
 \{ \textup{ symmetric polynomials
  in } \Lambda(x_1, \ldots, x_k) \}
\]
where $\Lambda(x_1,\ldots,x_k)$ is the exterior algebra in the
variables. The ring $\Gamma[x]$ can then be identified with the graded inverse limit
as $k\to \infty$.
Our computation of $K^*_G(\Omega G)$ is a suitable generalization
of this last description of $\Gamma[x]$. Thus, this manuscript proves what in some sense is a ``classical''
theorem in topology: its statement could have been made and 
understood by the topologists working decades ago, and the computation
itself fits naturally with the classical results. Nevertheless, since
the literature appears to have been silent on this issue, we have
taken this opportunity to provide the details. 

The following is our main result. 

\begin{theorem}\label{intro:main}
Let $G=SU(2)$ and let $\Omega G$ be the space of (continuous) based
loops in $G$, equipped with the natural $G$-action by pointwise
conjugation. Then 
$$K_G(\Omega G)
=\varprojlim_r\, 
\Bigl(K_G\bigl((\PP^1)^{2r}\bigr)\Bigr)^{S_{2r}}
=\varprojlim_r\, 
\{\mbox{symmetric polynomials in~$K_G\bigl((\PP^1)^{2r}\bigr)$}\}
$$
where 
$K_G\bigl((\PP^1)^{2r}\bigr)\cong 
R(G)[L_1, \dots, L_{2r}]/I,$
and $I$ is the ideal generated by $\{L_j^2-vL_j+1\}_{j=1}^n$.
Here $R(G)$ is the representation ring of $G$,
$v$ is the standard representation of $G = SU(2)$ on~${\CC}^2 $ and $L_j$ is
the pullback of either the canonical line bundle over the
$j$th factor of~$(\PP^1)^{2r}$ or its inverse, depending on~$j$ (see
Definition \ref{definition:L_j and barL_j}). 
The system maps in this inverse system are given by
$$i^* (s_j) = \begin{cases}
s'_0 &\mbox{if $j=0$};\cr
s'_1 +v s'_{0} &\mbox{if $j=1$};\cr
 s'_j +v s'_{j-1} + s'_{j-2}
& \mbox{if $1 < j \le 2r-2$};\cr
  v s'_{2r-2} + s'_{2r-3}
& \mbox{if $ j = 2r-1$};\cr
       s'_{2r-2}
& \mbox{if $    j =2r$},\cr
\end{cases}
$$
where $s_j$ and $s'_j$ are the $j$th elementary symmetric polynomials in
$\{L_1,\ldots, L_{2r}\}$ and  respectively 
$\{L'_1,\ldots, L'_{2r-1}\}$.
(See equation~\ref{eq:iota star of s_j}.)
\end{theorem}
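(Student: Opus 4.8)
The plan is to deduce the theorem from the $G$-equivariant ``geodesic'' filtration $X_0\subset X_1\subset X_2\subset\cdots$ of $\Omega G$ and the associated $G$-maps $\mu_n\colon(\PP^1)^n\to X_n$, both constructed in the companion paper \cite{HJS12}: here $\mu_n$ sends $(\ell_1,\dots,\ell_n)$ to the loop $\gamma_{\ell_1}\cdots\gamma_{\ell_n}$, where $\gamma_\ell$ rotates the line $\ell\subset\CC^2$ and counter-rotates $\ell^{\perp}$, so that $g\gamma_\ell g^{-1}=\gamma_{g\ell}$. From \cite{HJS12} we take as known that $\bigcup_n X_n=\Omega G$, that the relevant $\varprojlim^{1}$-term vanishes so that $K_G(\Omega G)=\varprojlim_n K_G(X_n)$ via restriction, and that each $K_G(X_n)$ is a free $R(G)$-module of rank $n+1$ whose skeletal filtration has rank-one graded pieces. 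It then suffices to: (i) identify the ring $K_G((\PP^1)^n)$ and its $S_n$-invariants; (ii) show that $\mu_n^*$ is an $R(G)$-algebra isomorphism onto these invariants; and (iii) compute the maps of the resulting inverse system.

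Step (i) is routine. Over $\PP^1=\PP(\CC^2)$ with $\CC^2$ the standard representation $v$, the Euler sequence $0\to L\to v\otimes\mathcal{O}\to L^{-1}\to 0$ (the quotient being $L^{-1}$ since $\det\CC^2$ is trivial for $SU(2)$) together with the projective bundle theorem gives $K_G(\PP^1)=R(G)[L]/(L^2-vL+1)$, so $L+L^{-1}=v$ and $L\cdot L^{-1}=1$. Since each $K_G(\PP^1)$ is $R(G)$-free, the equivariant K\"unneth theorem yields $K_G((\PP^1)^n)=R(G)[L_1,\dots,L_n]/(L_j^2-vL_j+1)_{j=1}^{n}$, free of rank $2^n$ on the squarefree monomials in the $L_j$. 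The group $S_n$ permutes this basis with orbit sums exactly the elementary symmetric polynomials $s_k=e_k(L_1,\dots,L_n)$, so $(K_G((\PP^1)^n))^{S_n}$ is $R(G)$-free of rank $n+1$ on $s_0,\dots,s_n$.

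Step (ii) is the crux. After a $G$-cellular approximation $\mu_n$ preserves skeletal filtrations and so induces a map of associated graded $R(G)$-modules. I will show the degree-$k$ piece carries the Bott class $\beta_k$ of the $2k$-cell of $X_k$ to $\sum_{|S|=k}\prod_{i\in S}(L_i-1)$, which is the leading term of $s_k$ and generates the degree-$k$ piece of $\mathrm{gr}\bigl((K_G((\PP^1)^n))^{S_n}\bigr)$; in particular $\mathrm{gr}(\mu_n^*)$ lands in the invariant part and is an isomorphism onto it, which (by finiteness of the filtrations) forces $\mu_n^*\colon K_G(X_n)\to (K_G((\PP^1)^n))^{S_n}$ to be an $R(G)$-algebra isomorphism, hence in particular injective — the point that pins down the ring structure, and also which automatically yields $\image\mu_n^*\subseteq$ invariants without appealing to any homotopy-commutativity. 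The required leading-term identity is a James-type property of $\mu_n$: restricting $\mu_n$ to the $k$-fold subproduct of $\PP^1$'s indexed by a $k$-set $S$ and following with the collapse $X_n\to X_n/X_{k-1}$ recovers, up to $G$-homotopy, the $k$-fold multiplication-and-collapse, which is a $K$-theory isomorphism on the top Bott class; summing over $S$ gives the claim. I expect verifying this in \emph{equivariant} $K$-theory — that $\mu_n^*\beta_k$ has the stated leading term with a \emph{unit} (not merely integer) coefficient — to be the main technical obstacle, though it runs closely parallel to the associated-graded computations already carried out in \cite{HJS12}.

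For step (iii), pass to the cofinal even-indexed subsystem $\{K_G(X_{2r})\}_r$ and realize its restriction maps geometrically. Inserting a pair $(\ell,\ell^{\perp})$ after the last surviving factor defines a $G$-map $\psi\colon(\PP^1)^{2r-2}\to(\PP^1)^{2r}$ with $\mu_{2r}\circ\psi$ $G$-homotopic to $\iota\circ\mu_{2r-2}$, where $\iota\colon X_{2r-2}\hookrightarrow X_{2r}$ is the inclusion, because $\gamma_\ell\gamma_{\ell^{\perp}}=\gamma_\ell\gamma_\ell^{-1}$ is $G$-equivariantly null-homotopic in $\Omega G$; the alternating choice of $L$ versus $L^{-1}$ on successive factors (cf.\ Definition~\ref{definition:L_j and barL_j}) is exactly the bookkeeping that aligns $\psi$ with these conventions. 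Transporting this square through the isomorphism of step (ii) shows the system map sends $s_j$ to $s_j$ with two of its arguments specialized to a tautological bundle $L'$ and its inverse; expanding and using $L'+(L')^{-1}=v$, $L'\cdot(L')^{-1}=1$ gives $s_j\mapsto s'_j+v\,s'_{j-1}+s'_{j-2}$, which is the displayed formula once one records which of $s'_j,s'_{j-1},s'_{j-2}$ vanish at the two ends of the range. Taking $\varprojlim_r$ then yields $K_G(\Omega G)=\varprojlim_r(K_G((\PP^1)^{2r}))^{S_{2r}}$, completing the proof.
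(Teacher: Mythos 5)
Your overall plan — deduce the theorem from the filtration of $\Omega G$ coming from \cite{HJS12}, compare $K_G$ of the filtration pieces to $K_G\bigl((\PP^1)^n\bigr)$ via a multiplication-type map, identify the image with the symmetric subring, and take inverse limits — coincides with the paper's strategy. Step (i) and the inverse-limit framing of step (iii) are in order (though note that the paper's restriction $i^*$ in equation~\eqref{eq:iota star of L_j} specializes the last two variables to the \emph{scalars} $b^{\pm 1}\in R(T)$ rather than to a duplicated tautological bundle, because the inclusion $(\PP^1)^{2r-2}\hookrightarrow(\PP^1)^{2r}$ used there is $T$-equivariant but not $G$-equivariant; your $G$-equivariant map $\psi$ would give a messier raw formula requiring reduction, even if the answer on symmetric elements is forced to agree). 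The substantive issues are in step (ii).

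First, the logical step ``$\mathrm{gr}(\mu_n^*)$ lands in $\mathrm{gr}$ of the invariants $\Rightarrow$ $\mathrm{im}\,\mu_n^*\subseteq$ invariants, with no appeal to homotopy-commutativity'' is false as stated. A filtered map whose associated graded lands in $\mathrm{gr}(C)$ for a filtered submodule $C\subset B$ need not land in $C$: take $B=\ZZ^2$ filtered by $F_1=\ZZ e_1\subset F_2=B$, $C=\ZZ(e_1+e_2)$, so $\mathrm{gr}_1(C)=0$, $\mathrm{gr}_2(C)=\ZZ e_2\subset\mathrm{gr}_2(B)$; the filtered map $A=\ZZ\to B$, $a\mapsto e_2$, has $\mathrm{gr}$ landing in $\mathrm{gr}(C)$ yet its image is not in $C$. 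So you still need the containment of the actual image in the symmetric subring, and the only natural way to get it is exactly the $S_{2r}$-homotopy-invariance argument you were trying to avoid; this is Lemma~\ref{lemma:inclusion} in the paper, which uses the ($G$-equivariant) homotopy-commutativity of $\Omega G$.

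Second, the heart of the matter is your asserted leading-term identity $\mu_n^*\beta_k=\sum_{|S|=k}\prod_{i\in S}(L_i-1)$ with a \emph{unit} coefficient, which you flag as ``the main technical obstacle'' but leave unproven. This is precisely where the paper does most of its work: rather than a James-type associated-graded calculation (which is delicate here because the $G$-equivariant filtration quotients $F_{2s}/F_{2s-2}\cong\Thom(\tau^{2s-1})$ are Thom spaces whose reduced $\tilde K^*_G$ has $R(G)$-rank $2$, not $1$, so there is no rank-one skeletal filtration available in $K^*_G$), the paper transfers the problem to equivariant cohomology via $\chT$, computes the relevant $2\times 2$ change-of-basis matrices explicitly (including the Chern character of the equivariant $K$-theory Thom class and a Todd-class calculation to identify the cohomology Thom class), and deduces $\det Q=1$ from a determinant identity (Lemmas~\ref{lemma:bar-U in terms of bar-x bar-a} and \ref{HimPhi}, equations~\eqref{eq:det M}--\eqref{eq:det N}). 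Your ``sum over $S$ of multiplication-and-collapse'' sketch does not address the rank-$2$ issue, and in particular does not produce the unit coefficient. So while the skeleton of your proof is the right one, both the containment $\mathrm{im}\,\mu_n^*\subseteq(\text{invariants})$ and the surjectivity onto the invariants remain to be established, and the mechanisms you propose for them either don't work (the $\mathrm{gr}$ argument for containment) or are not carried out (the leading-coefficient computation).
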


We now briefly sketch the outline of our proof. From the module
calculations in \cite{HJS12} we know that $K^*_G(\Omega G)$ is the
inverse limit of $K^*_G(F_{2r})$ as $r \to \infty$ for a certain
$G$-invariant filtration $F_0 \subseteq F_2 \subseteq \cdots \subseteq
F_{2r} \cdots$ of $\Omega G$. Thus, in the present manuscript, we
focus on the computation of the $K^*_G$-algebra structure of
$K^*_G(F_{2r})$. To accomplish this, we consider the $G=SU(2)$-space
$(\PP^1)^{2r}$ for each $r \geq 0$, where the $G$ acts diagonally on
each factor in the usual way, induced from the standard representation
of $SU(2)$ on $\C^2$. 
We define maps 
$\Phi_{2r}: (\PP^1)^{2r} \to F_{2r}$ for each $r\geq 0$ in
Section~\ref{sec:about PP^1^n} and then prove in 
Proposition~\ref{proposition:injective in KG} that the induced map
\[
\Phi_{2r}^*: K^*_G(F_{2r}) \to K^*_G((\PP^1)^{2r})
\]
is injective for all $r>0$. We also give an explicit and convenient presentation of
the right hand side via generators and relations in
Theorem~\ref{theorem:P1summary}. 
The main (and longest) technical argument
in this manuscript is a computation of the image of $\Phi_{2r}^*$ in
terms of the natural generators of $K^*_G((\PP^1)^{2r})$ in a
manner analogous to the description of $H^*(\Omega G)$ as a divided
polynomial algebra (Theorem~\ref{KF2r}). Taking the inverse limit of
this description yields $K^*_G(\Omega G)$. At various points in 
our argument, we find it necessary or useful to first prove the corresponding
statements in equivariant cohomology, and then use these to deduce the analogous
results in equivariant $K$-theory.

\bigskip

\noindent \textbf{Notation and standard facts.} 

\begin{itemize}
\item $T$ denotes the maximal torus of $G=SU(2)$ given by 
$$\left\{\begin{pmatrix}z&0\cr0&z^{-1}\end{pmatrix}
\mathop{\Big\vert} z\in S^1\right\}. $$
\item $W\cong S_2$ is the Weyl group of~$G$ and $w\in W$ is the nontrivial element.
\item $R(T) \cong \Z[b,b^{-1}]$ is the representation ring of~$T$, where $b$ is the
weight~$1$ (identity map) one-dimensional representation of~$T$,
and $w(b)=b^{-1}$.
\item $v \in R(G)$ is the standard (two-dimensional) representation of~$G$
on~$\C^2$. The restriction of $v$ to~$T$ is~$b\oplus b^{-1}$.
\item $R(G) \cong \Z[b,b^{-1}]^W = \Z[v]$ where $v = b+b^{-1}$.
\item $K^*_T(\pt) \cong R(T) \cong \Z[b,b^{-1}]$ and $K^*_G(\pt) \cong R(G) \cong \Z[v]$.
\item We let $H^\hp(X)$ denote the direct product $\prod_{i=0}^\infty H^i(X) $ and
$\tilde{H}^\hp(X)$ the direct product $\prod_{i=1}^\infty H^i(X)$. 
\item We will systematically use bars to denote elements in equivariant
cohomology analogous to the corresponding letter for equivariant
$K$-theory.

\end{itemize}

\bigskip
\noindent \textbf{Acknowledgements.} We thank Greg Landweber and 
Eckhard Meinrenken for
many useful discussions.

\section{Background}\label{sec:background}

In this section, we assemble for the convenience of the reader brief
accounts of various definitions and constructions used in later
sections.

\subsection{Polynomial loops and a filtration on $\Omega G$}
\label{subsec:polyloops}

Following \cite{PS86}, we define the space of \textbf{polynomial based loops
$\Omega_{\poly}U(n)$} as 
the set of maps $S^1 \to
U(n)$ based at the identity which can be expressed as polynomials in $z$ (here $z$ is 
the parameter on the circle $S^1$).
More precisely, we define 
\begin{equation} \label{polydef}
\Omega_{\poly,r}U(n) :=\left\{f: S^1 \to U(n)
\ \Bigg|\  f(1) = \mathbf{1}_{n \times n},\ f=\!\sum_{j=-r(n-1)}^ra_jz^j,
\space a_j \in M(n\times n,\C)
\right\}
\end{equation}
for a positive integer $r$, where $\mathbf{1}_{n \times n}$ denotes
the identity matrix and $M(n \times n,\C)$ is the space of $n\times n$
complex matrices. 
Note that the $a_j$ are constant matrices, and
$f(z)$ is required to be unitary (in particular invertible) for all
$z \in S^1 $.
An element in $\Omega_{\poly,r}U(n)$ may also be viewed as an
element of $\Omega_{\poly, r'}U(n)$ for any $r' > r$; via these
natural inclusions we now define
\[
\Omega_{\poly}U(n) := \bigcup_{r=0}^\infty \Omega_{\poly,r} U(n). 
\]
We define $\Omega_{\poly}H \subseteq \Omega_{\poly}U(n)$ for any subgroup $H \subseteq U(n)$ by
requiring the images $f(z)$ of an element $f \in \Omega_{\poly}U(n)$
to lie in $H$ for all $z \in S^1$.

By definition, the space $\Omega_{\poly} H$ comes equipped with a
filtration given by the successive subspaces $\Omega_{\poly,r}H$. 
For the case $H = SU(2)$, we write $F_{2r} $ for $\Omega_{\poly,r} SU(2)$; the motivation for this
notation comes from the fact that there exists a subspace of the
Grassmannian, denoted $F_{2r}$ in \cite{HJS12}, which is $SU(2)$-homotopy
equivalent to $\Omega_{\poly,r}SU(2)$
\cite[Thm 2.9(3), Equation (3.1)]{HJS12}. 

\begin{remark} The reason we index our filtration  by only 
the \emph{even} integers 
is related to the need to create a $SU(2)$-equivariant 
filtration, as discussed in detail in \cite{HJS12}. It is related to 
the fact that $\Omega_{\poly,r} SU(2)$ contains 
Laurent polynomials
whose degrees run from $-r$ to $r$. 
An analogous non-equivariant filtration indexed by 
\emph{all} integers is discussed by Pressley
and Segal in \cite{PS86}. The distinction will not be important for
the present paper, so we do not discuss this further. 
\end{remark}

Finally, note that matrix multiplication induces a map 
\begin{equation} \label{matmult}
F_{2j} \times F_{2k} \to 
F_{2(j+k)}.
\end{equation}
This map will be crucial to our constructions below.

\begin{remark}
In  \cite[Theorems 5.3-4]{HJS12} it is shown that 
 there is an $SU(2)$-homotopy equivalence 
between $\Omega_{\poly} SU(2)$ and $\Omega SU(2)$.
\end{remark}

\subsection{The equivariant Thom space and the equivariant Thom class
  in $K$-theory} \label{background:Atiyah}

Let $H$ be a compact Lie group, and let $\xi$ be an $n$-dimensional
complex vector bundle over a base space $X$ equipped with 
an action of $H$.
Let $p: E(\xi) \to X$ denote 
the bundle projection and let $\tilde{p}:=p\bigl(\PP(\xi)\bigr):
\PP(\xi) \to X$ denote the associated projective bundle with fiber
$\PP^{n-1}$. 
We let $\gamma_\xi$ denote the canonical line bundle
over~$\PP(\xi)$. We have the following \cite{Atiyah}.

\begin{lemma}\label{lemma:atiyah}
Let $H$ and $\xi$ be as above. Then there exists an $H$-bundle
$\beta_{\xi}$ such that 
$\tilde{p}^*(\xxi)\cong \gamma_\xi \oplus\beta_{\xi}$
as $H$-equivariant bundles.
\end{lemma}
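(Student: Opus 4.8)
The plan is to realize $\gamma_\xi$ as an honest $H$-equivariant subbundle of $\tilde p^*(\xi)$ and then split off an $H$-invariant complement using an averaged Hermitian metric. First I would recall the definition of the tautological line bundle concretely: a point of $\PP(\xi)$ lying over $x\in X$ is a complex line $\ell\subseteq E(\xi)_x$, and the fiber of $\tilde p^*(\xi)$ over that point is canonically $E(\xi)_x$; by definition the fiber of $\gamma_\xi$ over $\ell$ is the line $\ell$ itself. This exhibits $\gamma_\xi$ as a subbundle of $\tilde p^*(\xi)$ (local triviality is standard, following from local triviality of $\xi$ together with the usual affine charts on the projective fibers, which show that $\gamma_\xi$ is locally a direct summand). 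The $H$-action on $\PP(\xi)$ is the one induced fiberwise from the linear $H$-action on $E(\xi)$, so $h\cdot\ell = h(\ell)$; under the identification above the tautological inclusion $\gamma_\xi\hookrightarrow\tilde p^*(\xi)$ intertwines the $H$-actions, hence $\gamma_\xi$ is an $H$-equivariant subbundle of $\tilde p^*(\xi)$.

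Next, since $H$ is compact, I would average an arbitrary Hermitian metric on $\xi$ over $H$ with respect to normalized Haar measure to obtain an $H$-invariant Hermitian metric on $\xi$, and then pull it back along $\tilde p$ to get an $H$-invariant Hermitian metric on $\tilde p^*(\xi)$. Define $\beta_\xi := \gamma_\xi^{\perp}$, the fiberwise orthogonal complement of $\gamma_\xi$ inside $\tilde p^*(\xi)$ with respect to this metric. Because the metric is $H$-invariant and $\gamma_\xi$ is $H$-stable, its orthogonal complement is also $H$-stable; because $\gamma_\xi$ is locally a direct summand, orthogonal projection onto it is smooth, so $\beta_\xi$ is a genuine ($H$-equivariant) subbundle. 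The orthogonal direct sum decomposition then yields the desired $H$-equivariant isomorphism $\tilde p^*(\xi)\cong\gamma_\xi\oplus\beta_\xi$, with $\beta_\xi$ of rank $n-1$.

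There is no serious obstacle here; the statement is essentially formal once the right objects are in place. The only points requiring (routine) care are the verification that $\gamma_\xi$ is locally a direct summand of $\tilde p^*(\xi)$ — so that orthogonal complementation produces an actual bundle rather than a merely fiberwise-defined gadget — and the bookkeeping of $H$-equivariance at each stage. I expect the averaging construction of the invariant metric, which is exactly the step that uses compactness of $H$, to be the only ingredient that is not purely formal, and even that is entirely standard.
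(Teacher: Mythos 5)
Your argument is essentially identical to the paper's: both identify $\gamma_\xi$ as an $H$-equivariant subbundle of $\tilde p^*(\xi)$ and take $\beta_\xi$ to be the orthogonal complement with respect to an $H$-invariant metric. You simply spell out the averaging construction of the invariant metric and the local-direct-summand check, which the paper leaves implicit.
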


\begin{proof}
The bundle $\gamma_{\xi}$ is naturally an $H$-subbundle of
$\tilde{p}^*(\xi)$, so by using a choice of 
$H$-invariant Riemannian metric, we can define $\beta_{\xi}$ to be the
orthogonal complement of $\gamma_{\xi}$. This $\beta_{\xi}$ has the
required property. 
\end{proof}

We next recall a convenient description of the (equivariant) Thom space of a bundle, following
Atiyah \cite[p.100]{Atiyah}.
Although Atiyah does not explicitly say so, if $\xi$ is an $H$-bundle, then all the maps in 
his description are $H$-equivariant.

\begin{proposition} \label{p:atiyah}
Let $H$ be a compact Lie group and let $\xi$ be a finite-dimensional
complex vector bundle over a base space $X$ equipped with an action of
$H$. Let $\epsilon$ denote the trivial complex line bundle over $X$
equipped with the trivial action on the fibers. Then $\Thom(\xi) \cong_H
\PP(\xi \oplus \epsilon)/\PP(\xi) $.
\end{proposition}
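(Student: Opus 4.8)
The plan is to identify the Thom space $\Thom(\xi)$ with the disk bundle of $\xi$ modulo its sphere bundle, and then to replace this pair by a model living inside the projective bundle $\PP(\xi\oplus\epsilon)$. First I would recall that for an $H$-invariant Hermitian metric on $\xi$ the Thom space is $H$-homeomorphic to $D(\xi)/S(\xi)$, the disk bundle modulo the sphere bundle, and (equivalently, and more convenient here) to the one-point compactification of $E(\xi)$ fiberwise, all of which is manifestly $H$-equivariant since the metric can be chosen $H$-invariant. The key geometric observation is then that $\PP(\xi\oplus\epsilon)$ contains $\PP(\xi)$ as the "hyperplane at infinity" (the lines lying entirely in the $\xi$-summand), and the complement $\PP(\xi\oplus\epsilon)\setminus\PP(\xi)$ is $H$-equivariantly isomorphic to the total space $E(\xi)$: a line in $\xi_x\oplus\C$ not contained in $\xi_x$ is the graph of a unique linear map $\C\to\xi_x$, i.e. a unique vector in $\xi_x$, and this identification is fiberwise linear in the appropriate sense and natural, hence $H$-equivariant.

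The main step is then to promote this open-complement identification to a homeomorphism of quotients $\PP(\xi\oplus\epsilon)/\PP(\xi)\cong_H \Thom(\xi)$. Here I would argue that $\PP(\xi)$ is a closed $H$-subspace of the compact space $\PP(\xi\oplus\epsilon)$ (both are compact since $X$ is, as it must be for Atiyah's setup), so the quotient $\PP(\xi\oplus\epsilon)/\PP(\xi)$ is the one-point compactification of the open complement $E(\xi)$, with the collapsed point as the point at infinity. On the other side, $\Thom(\xi)=D(\xi)/S(\xi)$ is also the one-point compactification of $E(\xi)$ (via the standard radial homeomorphism $D(\xi)\setminus S(\xi)\to E(\xi)$, which can be taken fiberwise and $H$-equivariant using the invariant metric). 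Matching these two one-point compactifications of the same $H$-space $E(\xi)$ gives the desired $H$-homeomorphism; one only needs to check that the identification $\PP(\xi\oplus\epsilon)\setminus\PP(\xi)\xrightarrow{\sim} E(\xi)$ is a proper $H$-map so that it extends continuously to the compactifications, which follows from properness of the bundle projections and compactness of $X$.

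The step I expect to be the main obstacle is the careful verification that the "graph" identification $\PP(\xi\oplus\epsilon)\setminus\PP(\xi)\cong E(\xi)$ is genuinely $H$-equivariant and that it is proper (equivalently, that a sequence of lines in $\xi\oplus\epsilon$ escaping to infinity in $E(\xi)$ limits into $\PP(\xi)$, and conversely). Equivariance is essentially formal once one notes $H$ acts on $\xi\oplus\epsilon$ through the given action on $\xi$ and trivially on $\epsilon$, so it preserves the "line-not-in-$\xi$" condition and commutes with taking the graph; properness is a fiberwise statement about $\PP^{n}\setminus\PP^{n-1}\cong\C^n$ that holds uniformly because $X$ is compact. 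Everything else—choosing the invariant metric, the radial homeomorphism, recognizing quotients as one-point compactifications—is routine, and I would state these without belaboring the details, citing Atiyah for the non-equivariant skeleton and remarking, as the excerpt already does, that all constructions can be performed $H$-equivariantly.
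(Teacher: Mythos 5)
Your argument is correct but takes a genuinely different route from the paper's. The paper's proof produces an explicit $H$-equivariant map
\[
D(\xi)\longrightarrow \PP(\xi\oplus\epsilon),\qquad tv\mapsto [tv,\sqrt{1-t^2}\,],
\]
which restricts to $S(\xi)\to\PP(\xi)$, and then concludes by a pushout diagram comparing $D(\xi)/S(\xi)$ with $\PP(\xi\oplus\epsilon)/\PP(\xi)$. Your proof instead identifies both quotients as the one-point compactification of the common open $H$-space $E(\xi)$: $\PP(\xi\oplus\epsilon)\setminus\PP(\xi)\cong_H E(\xi)$ via the graph identification, $D(\xi)\setminus S(\xi)\cong_H E(\xi)$ via the radial map, and collapsing a closed subspace of a compact Hausdorff space is one-point compactification of the complement. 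The payoff of the paper's route is that the pushout argument works without assuming $X$ compact and hands you the explicit map $\PP(\xi\oplus\epsilon)\to\Thom(\xi)$ (used later, e.g.\ in diagram~\ref{maindiagram}); the payoff of yours is conceptual transparency and avoiding coordinate formulas, at the cost of explicitly needing $X$ compact (harmless here, since $X=\PP^1$ or $X=\pt$ throughout the paper). One small wrinkle: you flag properness of the graph identification as the main concern, but once you have argued it is an $H$-\emph{homeomorphism} of the two locally compact Hausdorff open complements, properness is automatic and the extension to one-point compactifications is formal; the genuine content is the homeomorphism itself and the observation that the collapsed point and the point at infinity are both $H$-fixed, so the extension is $H$-equivariant.
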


\begin{proof} 
By definition, $\Thom(\xi)$ is $D(\xi)/S(\xi)$ where $D(\xi)$ and
$S(\xi)$ denote the disk and sphere bundles associated to $\xi$
respectively. Note that we can identify the disk bundle as
$$D(\xi) \cong \{ tv\mid v \in S(\xi), t \in [0,1] \} $$
while we can identify 
$$\PP(\xi\oplus \epsilon) 
\cong \{ (u,w) \mid u \in D(\xi), w \in \CC, |u|^2 + |w|^2 = 1\}/\sim $$
where the equivalence relation $\sim$ is given by: $(\lambda u,
\lambda w) \sim (u,w)$ for $\lambda \in S^1 $. With these
identifications, consider 
the map $D(\xi) \to \PP(\xi \oplus \epsilon) $ given by 
$tv \mapsto [tv, \sqrt{1-t^2} ].$ 
This map is $H$-equivariant, and it restricts to an $H$-equivariant map 
$S(\xi) \to \PP(\xi)$ by taking $t = 1$. 
Therefore the induced map $\PP(\xi \oplus \epsilon) \to \Thom(\xi)$
in the pushout diagram
\begin{equation} \label{thomdiagram}
\begin{diagram}
S(\xi)  &\rTo&   \PP(\xi)   \cr
\dTo && \dTo_{j} \cr
D(\xi)  &\rTo&     \PP(\xi\oplus \epsilon)  \cr
\dTo && \dTo \cr
\Thom(\xi) &\rEqualto& \Thom(\xi) \cr
\end{diagram}
\end{equation}
is $H$-equivariant. 
\end{proof}

Under the assumption that $X$ and $BH$ have cells only in 
even degree (which will be satisfied by the spaces we study),
from Proposition~\ref{p:atiyah} it follows that we have short exact sequences
\begin{equation} \label{exseq} 0 \to \tilde{Y}^*\bigl(\Thom(\xi)\bigr) \to
\tilde{Y}^*\bigl(\PP(\xi \oplus \epsilon)\bigr) \rTo^{j^*}
\tilde{Y}^*\bigl(\PP(\xi )\bigr) \to 0 \end{equation}
where $Y^*$ here denotes any of the following generalized
(equivariant) cohomology theories: $K_H^*$; $H_H^*$; $K^*$ and $H^*$.
The following is immediate and we will henceforth always use this
identification.

\begin{lemma}\label{lemma:identify kernel j}
Let $H$ and $\xi$ be as above. Then $\tilde{Y}^*\bigl(\Thom(\xi)\bigr) \cong 
\Ker j^*$ via the sequence~\eqref{exseq}. 
\qed
\end{lemma}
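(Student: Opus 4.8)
The statement follows directly from the short exact sequence~\eqref{exseq}, so the only thing to verify is that this sequence is indeed exact under the stated hypotheses. First I would note that by Proposition~\ref{p:atiyah} we have an $H$-equivariant cofibration $\PP(\xi) \xrightarrow{j} \PP(\xi\oplus\epsilon)$ with cofiber $\Thom(\xi)$. Applying the reduced generalized (equivariant) cohomology theory $Y^*$ to this cofiber sequence yields the long exact sequence
\[
\cdots \to \tilde{Y}^{*}\bigl(\Thom(\xi)\bigr) \to \tilde{Y}^{*}\bigl(\PP(\xi\oplus\epsilon)\bigr) \xrightarrow{j^*} \tilde{Y}^{*}\bigl(\PP(\xi)\bigr) \to \tilde{Y}^{*+1}\bigl(\Thom(\xi)\bigr) \to \cdots,
\]
valid for each of $Y^* = K_H^*, H_H^*, K^*, H^*$ since all of these are generalized cohomology theories (in the equivariant cases, on $H$-CW pairs) that take cofiber sequences to long exact sequences. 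The point is then that the connecting maps vanish, which splits the long exact sequence into the short exact sequences~\eqref{exseq}; granting that, the identification $\tilde{Y}^*\bigl(\Thom(\xi)\bigr) \cong \Ker j^*$ is exactly the statement that $\tilde{Y}^*\bigl(\Thom(\xi)\bigr)$ injects and has image equal to $\Ker j^*$, i.e. the claim of the lemma.

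The key step, and the one requiring the evenness hypotheses, is the surjectivity of $j^*$ (equivalently, the vanishing of the connecting homomorphism). Here I would use that $X$ and $BH$ have cells only in even degrees: this forces $\PP(\xi)$ and $\PP(\xi\oplus\epsilon)$ to have ($H$-)cells only in even degrees as well, since projective bundles add cells in even degrees fiberwise (the fiber $\PP^{n-1}$ has a cell structure with one cell in each even dimension $0,2,\ldots,2(n-1)$), and likewise for the Borel construction. For the ordinary theories $H^*$ and $H_H^*$ this immediately gives that the cohomology is concentrated in even degrees, so any map raising degree by one out of an even-degree group into an even-degree group is forced to be zero — in particular the connecting map $\tilde{Y}^*(\PP(\xi)) \to \tilde{Y}^{*+1}(\Thom(\xi))$ vanishes on the even part, hence entirely. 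For $K^*$ and $K_H^*$ one argues via the Atiyah--Hirzebruch spectral sequence: even cell structure forces the AHSS to collapse, so these $K$-theory groups are likewise concentrated in even degree (i.e. $\tilde{K}^{\mathrm{odd}} = 0$), and the same degree-parity argument kills the connecting map. Alternatively, and perhaps more cleanly, I would invoke the known projective-bundle formula for all four theories: $Y^*(\PP(\xi))$ is a free $Y^*(X)$-module on $1, \gamma_\xi, \ldots, \gamma_\xi^{\,n-1}$ (using Lemma~\ref{lemma:atiyah} to make sense of the Euler/Thom class of $\beta_\xi$ equivariantly), and similarly for $\PP(\xi\oplus\epsilon)$ on $1,\ldots,\gamma^{\,n}$; the map $j^*$ sends the module generators of the source onto the first $n$ module generators of the target, so it is visibly surjective, with kernel the free submodule generated by the ``top'' class — which is precisely the image of $\tilde{Y}^*(\Thom(\xi))$, recovering the Thom isomorphism.

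The main obstacle is simply making the equivariant bookkeeping precise: one must be sure that the projective-bundle / even-cell argument goes through $H$-equivariantly rather than just for the underlying spaces. This is where the hypothesis that $BH$ (not just $X$) has even cells enters — it guarantees that the Borel constructions $EH\times_H \PP(\xi)$ etc.\ still have even cell structure, so that $H_H^*$ and $K_H^*$ remain concentrated in even degrees. With that in hand the splitting of the long exact sequence into~\eqref{exseq} is automatic, and Lemma~\ref{lemma:identify kernel j} is just the restatement ``injective with image the kernel of $j^*$''. I would keep the write-up short: cite the cofiber sequence from Proposition~\ref{p:atiyah}, invoke the even-degree vanishing (with a one-line AHSS remark for the $K$-theory cases), and conclude.
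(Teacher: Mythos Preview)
Your proposal is correct and follows essentially the same approach as the paper. The paper treats the lemma as immediate from the short exact sequence~\eqref{exseq} (indeed it is marked with \qed and introduced by ``The following is immediate''), and the exactness of~\eqref{exseq} itself is asserted rather than proved, with only the even-cell hypothesis on $X$ and $BH$ mentioned as justification; your write-up simply supplies the standard details (cofiber long exact sequence plus the parity/AHSS argument, or alternatively the projective-bundle formula) that the paper leaves implicit.
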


We now specialize the discussion to equivariant $K$-theory. In this
setting the equivariant Thom class $U_\xi \in
\tilde{K}^*_H(\Thom(\xi))$ of $\xi$ can be described as follows
\cite[p.102-103]{Atiyah}. We have 
\begin{equation}\label{def eqvt Thom class} 
U_\xi = \lambda\bigl(\gamma^*_{\xi \oplus \epsilon} \otimes \bar{p}^* (\xi)\bigr) \in
\Ker j^* \cong \tilde{K}^*_H\bigl(\Thom(\xi)\bigr ) 
\end{equation}
where $\bar{p}:\PP(\zeta \oplus \epsilon) \to X$
is the bundle projection, $\lambda$ is the exterior bundle and we identify
$\tilde{K}^*_H(\Thom(\xi)) \cong \Ker j^*$ as in the lemma above. 

Let $\zzeta$ be a rank $n$ complex vector bundle with base
space $X$, equipped with an action of~$H$. 
Consider the following diagram
\begin{equation}\label{maindiagram}
\begin{diagram}
\PP^{n-1}&\rTo&\PP(\zzeta)&\rTo^{\tilde{p}}&X\cr
\dTo&&\dTo_j&&\dEqualto\cr
\PP^n&\rTo^{k}&\PP(\zzeta\oplus\epsilon)&\rTo^{\bar{p}}&X\cr
&&\dTo&&\cr
&&\Thom(\zzeta)&&\cr
\end{diagram}
\end{equation}
where $\tilde{p}: \PP(\zzeta) \to X$ and $\bar{p}: \PP(\zzeta \oplus
\epsilon) \to X$ denote the bundle projections and $$j: \PP(\zzeta)
\into \PP(\zzeta \oplus \epsilon) \qquad 
\mbox{and}\qquad k: \PP^n \into \PP(\zzeta
\oplus \epsilon)$$ denote the natural inclusions.
 We will use this
diagram repeatedly. 

\begin{remark}
In order for the inclusion $k: \PP^1 \into \PP(\zzeta \oplus \epsilon)$ 
to be an $H$-equivariant map, the image $\bar{p}(k(\PP^1)) \in X$ must
be an $H$-fixed point.
In some of our applications of the above diagram, the space~$X$ will not have
an $H$-fixed point. In these cases, the map $k$ is only a map of topological
spaces, since the fiber has no $H$-action. 
\end{remark}

Applying the argument for Lemma~\ref{lemma:atiyah} to $\zzeta \oplus
\epsilon$ and using the fact that $\bar{p}^*(\zzeta \oplus \epsilon)
\cong \bar{p}^*(\zzeta) \oplus \epsilon$, we obtain the following. 

\begin{lemma}\label{lemma:beta}
Let $H$ and $\zzeta$ be as above. Then there exists a complex line
bundle $\beta_{\zzeta \oplus \epsilon}$ over $\PP(\zzeta \oplus
\epsilon)$ equipped with an action of $H$ such that 
\begin{equation}\label{basemainequation}
\bar{p}^*(\zzeta) \oplus\epsilon \cong 
\gamma_{\zzeta\oplus\epsilon}\oplus\beta_{\zzeta\oplus \epsilon}
\end{equation}
as $H$-vector bundles.
\qed
\end{lemma}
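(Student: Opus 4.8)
The plan is short, because this lemma is precisely Lemma~\ref{lemma:atiyah} applied to the bundle $\zzeta\oplus\epsilon$ in place of $\xxi$, with its conclusion rewritten in a slightly different shape. First I would invoke Lemma~\ref{lemma:atiyah} keeping the base space $X$ and taking the rank-$(n+1)$ $H$-bundle $\zzeta\oplus\epsilon$ as the ``$\xxi$'' there. This produces an $H$-bundle, which I would simply name $\beta_{\zzeta\oplus\epsilon}$ (it has rank $\rank\zzeta$, hence is a line bundle in the rank-one situation relevant to the later sections), together with an $H$-equivariant isomorphism
\[
\bar{p}^*(\zzeta\oplus\epsilon)\;\cong\;\gamma_{\zzeta\oplus\epsilon}\oplus\beta_{\zzeta\oplus\epsilon},
\]
where $\bar{p}\colon\PP(\zzeta\oplus\epsilon)\to X$ is exactly the projective-bundle projection appearing in diagram~\eqref{maindiagram} and $\gamma_{\zzeta\oplus\epsilon}$ is the tautological line bundle over $\PP(\zzeta\oplus\epsilon)$. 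Concretely, as in the proof of Lemma~\ref{lemma:atiyah}, $\beta_{\zzeta\oplus\epsilon}$ is the orthogonal complement of the tautological subbundle $\gamma_{\zzeta\oplus\epsilon}$ inside $\bar{p}^*(\zzeta\oplus\epsilon)$, taken with respect to a fixed $H$-invariant Hermitian metric; it is the invariance of this metric that makes the splitting $H$-equivariant.

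The remaining step is to put $\bar{p}^*(\zzeta\oplus\epsilon)$ into the form $\bar{p}^*(\zzeta)\oplus\epsilon$. For this I would use that pullback of vector bundles commutes with direct sums, giving an $H$-equivariant isomorphism $\bar{p}^*(\zzeta\oplus\epsilon)\cong\bar{p}^*(\zzeta)\oplus\bar{p}^*(\epsilon)$, together with the fact that the pullback of the trivial line bundle with trivial fiberwise $H$-action is again the trivial line bundle with trivial fiberwise $H$-action, i.e.\ $\bar{p}^*(\epsilon)\cong\epsilon$ as $H$-bundles. Substituting this into the displayed isomorphism yields precisely~\eqref{basemainequation}.

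I do not expect any real obstacle here: the content is word-for-word that of Lemma~\ref{lemma:atiyah}, and the lemma's role is essentially bookkeeping for the diagram~\eqref{maindiagram} used throughout the paper. The only point that deserves a remark is equivariance, i.e.\ that every bundle isomorphism above can be chosen $H$-equivariant; this holds because the orthogonal-complement splitting uses an $H$-invariant metric, and the identification $\bar{p}^*(\zzeta\oplus\epsilon)\cong\bar{p}^*(\zzeta)\oplus\epsilon$ is built from maps pulled back from $H$-equivariant maps over $X$, with the extra $\epsilon$-summand carried along with its trivial action. Thus no new idea is needed beyond Lemma~\ref{lemma:atiyah} and the observation that an $\epsilon$-summand survives pullback.
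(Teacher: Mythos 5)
Your proof is correct and is essentially identical to the paper's, which simply says: ``Applying the argument for Lemma~\ref{lemma:atiyah} to $\zzeta\oplus\epsilon$ and using the fact that $\bar{p}^*(\zzeta\oplus\epsilon)\cong\bar{p}^*(\zzeta)\oplus\epsilon$, we obtain the following.'' You also correctly flag that $\beta_{\zzeta\oplus\epsilon}$ has rank $\rank\zzeta$ in general and is a line bundle only when $\zzeta$ is rank one, which is the case used in the rest of the paper.
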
 

In what follows, we will often use the above results for the 
special case in which $\zzeta$  is a complex line bundle, i.e. $n=1$,
so we take a moment to discuss this case further. 
In this special case, notice that $\tilde{p}$ is the identity map, since $\PP^{n-1}=\PP^0$ is a
point.  This means the composition $\bar{p} \circ j$ can be identified
with the identity map on $X$, in turn implying that 
the map $j^*$ splits.  We conclude that there exists a direct sum decomposition
$$\tilde{K}^*_H\bigl(\PP(\zzeta\oplus\epsilon)\bigr)
\cong \tilde{K}^*_H\bigl(\Thom(\zzeta)\bigr)\oplus\tilde{K}^*_H(X
\cong \PP(\zzeta))$$
where again by Lemma~\ref{lemma:identify kernel j} we identify $\tilde{K}^*_H\bigl(\Thom(\zzeta)\bigr )\cong \Ker j^*$.

Using the description of the equivariant Thom class in~\eqref{def eqvt
  Thom class}, we now give a concrete computation of $U_\zzeta$ for an
$H$-equivariant line bundle $\zzeta$. 

\begin{lemma}\label{lemma:compute Uzzeta}
  For $H$ and $\zzeta$ as above, the $H$-equivariant $K$-theoretic
  Thom class $U_\zzeta$ is given by 
\[
U_\zzeta = 1 - \beta_{\zzeta \oplus \epsilon} \in \Ker j^* \cong
\tilde{K}^*_H(\Thom(\zzeta)).
\]
\end{lemma}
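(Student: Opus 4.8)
The plan is to unwind the definition of the equivariant $K$-theoretic Thom class from equation~\eqref{def eqvt Thom class} in the special case $n=1$, and then simplify using the line bundle decomposition of Lemma~\ref{lemma:beta}. First I would recall that for a line bundle $\zzeta$ over $X$, we have $U_\zzeta = \lambda\bigl(\gamma^*_{\zzeta \oplus \epsilon} \otimes \bar{p}^*(\zzeta)\bigr)$, where $\gamma^*_{\zzeta\oplus\epsilon}$ is the dual of the canonical line bundle over $\PP(\zzeta\oplus\epsilon)$, and $\bar p^*(\zzeta)$ is the pullback of $\zzeta$ itself (not $\zzeta\oplus\epsilon$). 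Since $\zzeta$ has rank one, the bundle $\gamma^*_{\zzeta\oplus\epsilon} \otimes \bar p^*(\zzeta)$ is a line bundle, and for a line bundle $L$ one has $\lambda(L) = \lambda^0(L) - \lambda^1(L) = 1 - L$ in $K$-theory (with the sign convention making $\lambda$ land in reduced $K$-theory / $\Ker j^*$). Thus $U_\zzeta = 1 - \gamma^*_{\zzeta\oplus\epsilon}\otimes\bar p^*(\zzeta)$, and the task reduces to identifying $\gamma^*_{\zzeta\oplus\epsilon}\otimes\bar p^*(\zzeta)$ with $\beta_{\zzeta\oplus\epsilon}$.

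To make that identification I would use Lemma~\ref{lemma:beta}, which gives $\bar p^*(\zzeta)\oplus\epsilon \cong \gamma_{\zzeta\oplus\epsilon}\oplus\beta_{\zzeta\oplus\epsilon}$ as $H$-bundles over $\PP(\zzeta\oplus\epsilon)$. Since everything here has rank one except the rank-two total, taking determinants (top exterior powers) of both sides gives $\bar p^*(\zzeta) \cong \gamma_{\zzeta\oplus\epsilon}\otimes\beta_{\zzeta\oplus\epsilon}$ as $H$-line bundles (using that $\det\epsilon$ is trivial). Tensoring both sides with $\gamma^*_{\zzeta\oplus\epsilon} = \gamma_{\zzeta\oplus\epsilon}^{-1}$ yields $\gamma^*_{\zzeta\oplus\epsilon}\otimes\bar p^*(\zzeta) \cong \beta_{\zzeta\oplus\epsilon}$, exactly as needed. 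Substituting back gives $U_\zzeta = 1 - \beta_{\zzeta\oplus\epsilon}$, and by Lemma~\ref{lemma:identify kernel j} this lives in $\Ker j^* \cong \tilde K^*_H(\Thom(\zzeta))$, completing the argument. I would also remark that the class $1 - \beta_{\zzeta\oplus\epsilon}$ indeed maps to zero under $j^*$: pulling back along $j:\PP(\zzeta)\into\PP(\zzeta\oplus\epsilon)$, the bundle $\gamma_{\zzeta\oplus\epsilon}$ restricts to the tautological bundle $\gamma_\zzeta$ of the rank-one $\zzeta$, i.e. to $\zzeta$ itself, while $j^*\bar p^*(\zzeta) = \zzeta$ (as $\bar p\circ j = \tilde p = \mathrm{id}_X$), so $j^*\beta_{\zzeta\oplus\epsilon} = \zzeta^{-1}\otimes\zzeta = \epsilon$ is trivial and $j^*(1-\beta_{\zzeta\oplus\epsilon}) = 1 - 1 = 0$; this is a consistency check rather than a logically needed step.

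The only mild subtlety — and the step I expect to require the most care — is the sign/normalization convention in the exterior-algebra operation $\lambda$ used to define the Thom class in \eqref{def eqvt Thom class}, and correspondingly why $\lambda$ applied to a line bundle equals $1 - (\text{that line bundle})$ rather than $1 + (\text{that line bundle})$; this is just Atiyah's convention for the $K$-theory Euler/Thom class $\lambda_{-1}$, but I would state it explicitly to avoid ambiguity. Everything else is a routine determinant-of-a-rank-two-bundle computation together with the rank-one reductions ($\tilde p = \mathrm{id}$, $\PP^{n-1} = \PP^0 = \pt$) already noted in the paragraph preceding the lemma. The whole proof should be three or four lines once these conventions are pinned down.
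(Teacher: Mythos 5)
Your proof is correct and follows the same overall strategy as the paper's: unwind \eqref{def eqvt Thom class}, identify $\gamma^*_{\zzeta\oplus\epsilon}\otimes\bar p^*(\zzeta)$ with $\beta_{\zzeta\oplus\epsilon}$, and use $\lambda(\text{line bundle }L)=1-L$. The one place where you genuinely deviate is in how you make that identification. The paper applies $c_1^H$ to the decomposition~\eqref{basemainequation} to get $c_1^H(\bar p^*\zzeta)=c_1^H(\gamma_{\zzeta\oplus\epsilon})+c_1^H(\beta_{\zzeta\oplus\epsilon})$ and then invokes the classification of $H$-equivariant line bundles by their equivariant first Chern class to conclude $\beta_{\zzeta\oplus\epsilon}\cong\gamma^*_{\zzeta\oplus\epsilon}\otimes\bar p^*(\zzeta)$. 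You instead take the top exterior power of both sides of the same rank-two isomorphism, which gives $\bar p^*(\zzeta)\cong\gamma_{\zzeta\oplus\epsilon}\otimes\beta_{\zzeta\oplus\epsilon}$ directly as $H$-bundles, and then tensor by $\gamma^{-1}_{\zzeta\oplus\epsilon}$. Your route is slightly cleaner: it produces an actual bundle isomorphism without passing through cohomology and without appealing to the (true, but not entirely trivial) fact that equivariant line bundles are classified by $c_1^H$. The concern you raise about the sign convention in $\lambda=\lambda_{-1}$ is real but is already fixed by \eqref{def eqvt Thom class} itself, and you resolve it the same way the paper does. Your closing observation that $1-\beta_{\zzeta\oplus\epsilon}\in\Ker j^*$ is exactly the content of the Remark immediately following the lemma in the paper, so this is a correct consistency check.
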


\begin{proof}
  By~\eqref{def eqvt Thom class} we have that 
\[
U_\zzeta = \lambda(\gamma^*_{\zzeta \oplus \epsilon} \otimes
\bar{p}^*(\zzeta)).
\]
Recall that $H$-equivariant line bundles are classified by their
$H$-equivariant first Chern class. The
definition~\eqref{basemainequation} of $\beta_{\zzeta \oplus
  \epsilon}$ implies that 
\[
c_1^H(\bar{p}^*(\zzeta)) = c_1^H(\gamma_{\zzeta \oplus \epsilon}) +
c_1^H(\beta_{\zzeta \oplus \epsilon})
\]
which in turn implies $\beta_{\zzeta \oplus \epsilon} \cong
\gamma^*_{\zzeta \oplus \epsilon} \otimes \bar{p}^*(\zzeta)$. Thus
$U_\zzeta = \lambda(\beta_{\zzeta \oplus \epsilon})$, and since
$\beta_{\zzeta \oplus \epsilon}$ is a line bundle, its exterior power
is simply $1 - \beta_{\zzeta \oplus \epsilon}$, as desired. 
\end{proof}

\begin{remark}
  Note that $1 - \beta_{\zzeta \oplus \epsilon}$ can be checked to lie
  in $\Ker j^*$ by the following direct argument. By definition of the
  tautological line bundle, it is straightforward to see that
  $j^*(\gamma_{\zzeta \oplus \epsilon})=\gamma_\zeta \cong \zzeta$
where the final isomorphism follows because $\bar{p}\circ j = \tilde{p}$
can be identified with the identity map.
The defining equation~\eqref{basemainequation} then implies 
  \begin{equation}
    \begin{split}
      \zzeta \oplus \epsilon & \cong j^* \bar{p}^*(\zzeta) \oplus
      \epsilon \\
 & \cong j^*(\gamma_{\zzeta \oplus \epsilon}) \oplus j^*(\beta_{\zzeta
   \oplus \epsilon}) \\
 & \cong \zeta \oplus j^*(\beta_{\zzeta \oplus \epsilon}) \\
    \end{split}
  \end{equation}
which in turn implies $j^*(1 - \beta_{\zzeta \oplus \epsilon}) = 0 \in
  \tilde{K}^*_H(\PP(\zzeta))$. 
\end{remark}

\subsection{The bundle $\tau$}\label{subsec:tau}

We now briefly recall the definition and some of the properties of a
bundle $\tau$ discussed in \cite{HJS12}. 
The main reason for its appearance in this paper is
Proposition~\ref{prop:filtration quotient as Thom space}.

Let $G = SU(2)$. Let $\gamma$ denote the tautological bundle over $\PP^1$ and let
  $\perp$ denote the orthogonal complement with respect to the
  standard metric in $\C^2$.
We have the following \cite[Definition 3.1]{HJS12}. 

\begin{definition}\label{definition:tau}
We define $\tau$ to be the $G$-equivariant
complex line bundle over~$\PP^1$ with total space
$$E(\tau)=
\bigl\{(u,v)\mid u\in S^3\subset\C^2, v\in (u^\perp)\bigr\}
/\mathord{\sim}$$
where the equivalence relation is given by $(u,v)\sim(\zeta u,\zeta
v)$ for $\zeta\in S^1$ and the bundle projection to $\PP^1$ is given by 
taking the first factor. 
\end{definition}

The bundle $\tau$ can in fact be identified as the tangent bundle to
$\PP^1$ \cite[Proposition 3.2]{HJS12}. 

\begin{proposition}\label{prop:tau}
The bundle $\tau$ of Definition~\ref{definition:tau} is
$G$-equivariantly isomorphic to
the tangent bundle $T\PP^1$ of~$\PP^1$.
\qed
\end{proposition}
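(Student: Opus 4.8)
The plan is to exhibit an explicit $G$-equivariant bundle isomorphism between $\tau$ and $T\PP^1$ by writing down convenient models for both bundles and a map between their total spaces. First I would recall the standard description of $T\PP^1$ in terms of homogeneous coordinates: for a line $\ell = [u] \in \PP^1$ (with $u \in S^3 \subset \C^2$), the fiber $T_\ell\PP^1$ is canonically $\Hom(\ell, \C^2/\ell) \cong \Hom(\ell, \ell^\perp)$, and using the metric this is identified with pairs $(u,v)$ with $v \in u^\perp$, modulo the scaling $(u,v)\sim(\zeta u, \zeta v)$ for $\zeta \in S^1$ coming from the $S^1$-freedom in choosing $u$ over $\ell$ — but one must track how $v$ transforms. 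The key point is that an element of $\Hom(\ell, \ell^\perp)$ evaluated on the basis vector $u$ gives a vector in $\ell^\perp = u^\perp$, and changing $u$ to $\zeta u$ changes that output to $\zeta v$ (since the map is $\C$-linear, rescaling the input rescales the output). This is precisely the equivalence relation in Definition~\ref{definition:tau}, so the total space $E(\tau)$ as written is literally a model for $E(T\PP^1)$.

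The concrete steps I would carry out are: (1) define the map $\Psi: E(\tau) \to E(T\PP^1)$ on representatives by sending the class of $(u,v)$ to the homomorphism $\ell = \C u \to u^\perp$ determined by $u \mapsto v$ (extended $\C$-linearly), and check it is well-defined on equivalence classes using the transformation law just described; (2) check $\Psi$ is a bundle map over $\PP^1$, i.e. it covers the identity and is linear on fibers — linearity on fibers is immediate since scaling $v$ scales the homomorphism; (3) check $\Psi$ is a fiberwise isomorphism (both fibers are one-dimensional complex vector spaces and $\Psi$ is clearly injective), hence a bundle isomorphism; (4) verify $G = SU(2)$-equivariance: $G$ acts on $E(\tau)$ by $g\cdot[(u,v)] = [(gu, gv)]$ (this is well-defined since $g$ is unitary, so $gu \in S^3$ and $gv \in (gu)^\perp$), and $G$ acts on $E(T\PP^1)$ via the differential of the standard action on $\PP^1$, which on $\Hom(\ell,\ell^\perp)$ is conjugation $\phi \mapsto g \circ \phi \circ g^{-1}$; a direct check shows $\Psi(g\cdot[(u,v)])$ sends $gu \mapsto gv$, which matches $g\circ \Psi([(u,v)])\circ g^{-1}$ applied to $gu$.

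The main obstacle — really the only subtlety — is pinning down the correct identification of $T_\ell\PP^1$ with pairs modulo the given $S^1$-action and making sure the equivalence relation and the $G$-action are compatible with the homomorphism picture. In particular one should be careful that the metric identification $\C^2/\ell \cong \ell^\perp$ is $G$-equivariant (it is, since $G$ acts by unitaries preserving the Hermitian form) so that the whole chain of identifications $T\PP^1 \cong \Hom(\gamma, \gamma^\perp) \cong \Hom(\gamma, \C^2/\gamma)$ respects the $G$-structure. Once these compatibilities are in place, the isomorphism $\tau \cong T\PP^1$ is essentially a matter of unwinding definitions. Since this is already carried out in \cite[Proposition 3.2]{HJS12} as cited, the proof here can reasonably be brief, indicating the isomorphism $\Psi$ and referring to the companion paper for the routine verifications.
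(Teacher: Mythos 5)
The paper does not actually give a proof of this proposition; the \verb|\qed| after the statement indicates the argument is deferred to the cited companion paper \cite[Proposition~3.2]{HJS12}. Your sketch is correct and is the natural argument: the identification $T_\ell\PP^1\cong\Hom(\ell,\C^2/\ell)\cong\Hom(\ell,\ell^\perp)$, parametrized by $(u,\phi(u))$ for a unit representative $u$ of $\ell$, reproduces exactly the $S^1$-equivalence relation in Definition~\ref{definition:tau}, and $G$-equivariance follows because $SU(2)$ acts unitarily, so the metric splitting $\C^2=\ell\oplus\ell^\perp$ is preserved. You also correctly flag the one genuine subtlety (tracking how the pair $(u,v)$ transforms under both the $S^1$-rescaling and the $G$-action) and handle it accurately.
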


The following is immediate. 

\begin{corollary}\label{cor:Cherntau}
The $G$-equivariant first Chern class of $\tau$ is given by
$c_1^G(\tau)=-2c_1^G(\gamma)$.
In particular, $\tau \cong_G \gamma^{-2} \cong_G (\gamma^*)^2. $

\end{corollary}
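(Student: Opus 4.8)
The statement to prove is that $c_1^G(\tau) = -2c_1^G(\gamma)$, and consequently that $\tau \cong_G \gamma^{-2} \cong_G (\gamma^*)^2$ as $G$-equivariant line bundles. The plan is to invoke Proposition \ref{prop:tau}, which identifies $\tau$ $G$-equivariantly with the tangent bundle $T\PP^1$, and then compute the $G$-equivariant first Chern class of $T\PP^1$ in terms of that of the tautological bundle $\gamma$.

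First I would recall the (non-equivariant) Euler sequence on $\PP^1$: there is a short exact sequence $0 \to \calO \to \calO(1)^{\oplus 2} \to T\PP^1 \to 0$, or in the notation of the present paper, $0 \to \epsilon \to (\gamma^*)^{\oplus 2} \to T\PP^1 \to 0$. The key point is that when $\PP^1 = \PP(\C^2)$ with $\C^2$ carrying the standard $SU(2)$-representation $v$, this sequence is naturally $G$-equivariant: the middle term is $\Hom(\gamma, \underline{\C^2}) = \gamma^* \otimes \underline{v}$, the sub-bundle $\epsilon$ is the span of the identity inclusion $\gamma \hookrightarrow \underline{\C^2}$ (which is $G$-invariant since $G$ acts by bundle automorphisms fixing the identity map), and the quotient is $T\PP^1$. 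Taking $G$-equivariant first Chern classes and using additivity on short exact sequences (valid since all spaces here have only even cells, so equivariant Chern classes behave well), we get $c_1^G(T\PP^1) = c_1^G(\gamma^* \otimes \underline{v}) - c_1^G(\epsilon) = c_1^G(\gamma^* \otimes \underline{v})$. Since $v|_T = b \oplus b^{-1}$ and $c_1^G$ of a rank-two bundle that splits equivariantly is the sum of the $c_1^G$ of the factors, $c_1^G(\gamma^* \otimes \underline{v}) = 2c_1^G(\gamma^*) + c_1^G(\underline{b}) + c_1^G(\underline{b^{-1}})$; the two weight contributions cancel, leaving $c_1^G(T\PP^1) = 2c_1^G(\gamma^*) = -2c_1^G(\gamma)$. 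Combining with Proposition \ref{prop:tau} gives $c_1^G(\tau) = -2c_1^G(\gamma)$. Since $G$-equivariant complex line bundles over $\PP^1$ are classified by their $G$-equivariant first Chern class (as noted in the proof of Lemma \ref{lemma:compute Uzzeta}), the equality of Chern classes upgrades to the isomorphisms $\tau \cong_G \gamma^{-2} \cong_G (\gamma^*)^2$.

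The main obstacle is making sure the Euler sequence is genuinely $G$-equivariant and that equivariant additivity of $c_1$ applies — i.e. checking the trivial sub-bundle $\epsilon$ really does carry the trivial $G$-action on fibers (it does, because a unitary $g \in SU(2)$ sends the identity inclusion $\gamma_{[u]} \hookrightarrow \C^2$ to the identity inclusion $\gamma_{[gu]} \hookrightarrow \C^2$, with no scalar), and that the relevant cohomology-theory formalism (even cells on $\PP^1$ and $BG$) licenses the Whitney-sum formula equivariantly. Alternatively — and this may be the cleanest route, avoiding the Euler sequence altogether — I could argue directly from Definition \ref{definition:tau}: the fiber of $\tau$ over $[u]$ is $u^\perp \subset \C^2$, so $\gamma \oplus \tau \cong \underline{\C^2} = \underline{v}$ as $G$-bundles, whence $c_1^G(\tau) = c_1^G(\underline{v}) - c_1^G(\gamma) = -c_1^G(\gamma)$... but this gives the wrong answer, which flags that $\tau$ as defined is \emph{not} $\gamma^\perp$ but rather $\gamma^* \otimes \gamma^\perp$ (note the equivalence relation $(u,v) \sim (\zeta u, \zeta v)$ twists by $\gamma^{-1}$). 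Carrying that twist through: $\tau \cong_G \gamma^{-1} \otimes \gamma^\perp$, so $c_1^G(\tau) = -c_1^G(\gamma) + c_1^G(\gamma^\perp) = -c_1^G(\gamma) + \bigl(c_1^G(\underline v) - c_1^G(\gamma)\bigr) = -2c_1^G(\gamma)$, again using $c_1^G(\underline v) = 0$. This confirms the result and is probably the argument to present, since it stays closest to the definitions already set up in the paper.
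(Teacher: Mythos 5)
Your proposal is correct, but it takes a genuinely different route from the paper. The paper's proof is a one-line citation: it invokes Milnor--Stasheff \cite[Theorem 14.10]{Mil74} for the \emph{non-equivariant} total Chern class of $T\PP^1$, combines this with Proposition~\ref{prop:tau} (the equivariant identification $\tau \cong_G T\PP^1$), and then lifts the non-equivariant equality to an equivariant one via the isomorphism $H^2_G(X) \cong H^2(X)$, which holds because $G=SU(2)$ is simply connected. Your recommended second argument is instead a self-contained equivariant computation working directly from Definition~\ref{definition:tau}: you correctly observe that the equivalence relation $(u,v)\sim(\zeta u,\zeta v)$ produces a twist, so that $\tau \cong_G \gamma^{-1}\otimes\gamma^\perp$ rather than $\gamma^\perp$, and then apply the Whitney sum formula to $\gamma\oplus\gamma^\perp\cong\underline{v}$ together with $c_1^G(\underline{v})=0$ (which holds since $\underline{v}$ is pulled back from $H^2_G(\pt)=H^2(BSU(2))=0$). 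Your first argument (the equivariant Euler sequence) is in effect the equivariant unwinding of the proof inside Milnor--Stasheff's Theorem 14.10, so it proves the same thing without the detour through the $H^2_G\cong H^2$ lift. What you gain over the paper's version is that the computation stays entirely inside the paper's own notation and doesn't rely on matching conventions against an external reference; what you lose is brevity. The self-correction you flag (that naively computing with $\gamma\oplus\tau\cong\underline{v}$ gives the wrong answer because $\tau\not\cong\gamma^\perp$) is worth keeping, as it is exactly the pitfall the definition of $\tau$ invites. The final upgrade from equality of equivariant first Chern classes to a $G$-isomorphism of line bundles is the same fact the paper itself uses (cited in the proof of Lemma~\ref{lemma:compute Uzzeta}).
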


\begin{proof}
The corresponding statement for $T\PP^1$ is \cite[Theorem~14.10] {Mil74},
where the $a$ in the statement of that theorem is identified in its proof
as~$c_1(\gamma)$. (Here we used the fact that $H_G^2(X) \cong H^2(X)$ since $G=SU(2)$ is
simply connected. See the discussion below Remark \ref{rk1}.)
\end{proof}

\begin{remark}\label{rem:Milnor}
The proof of \cite[Theorem~14.10] {Mil74}, to which we refer in the
above proof, contains the assertion that
$\tau\oplus\epsilon\cong 2\gamma^*$ as topological bundles, where $\gamma^*$
denotes the dual bundle to~$\gamma$.
The corresponding $G$-equivariant statement is as follows. Consider
the standard action of $G=SU(2)$ on $\PP^1$ and $\C^2$
respectively. Equip 
$\PP^1 \times \C$ with the diagonal
$SU(2)$-action. Projection to the first factor $p: \PP^1 \times \C^2
\to \PP^1$ makes this a $G$-equivariant bundle which is topologically
trivial but $G$-equivariantly non-trivial. We denote this rank $2$
$G$-bundle by $v$, and by slight abuse of language, call it the
standard $2$-dimensional representation of $G$. The $G$-equivariant
analogue of the non-equivariant statement $\tau \oplus \epsilon \cong
2 \gamma^*$ above is then 
\begin{equation}
  \label{eq:tau plus epsilon is v times gammadual}
  \tau \oplus \epsilon \cong v \otimes \gamma^*.
\end{equation}
\end{remark}

The bundle $\tau$ is an essential ingredient 
in our description of the filtration quotient 
$\frac{F_{2r}}{F_{2(r-1)}},$ where the $F_{2r}$'s are the subspaces of
$\Omega_{\poly}G$ introduced in Section~\ref{subsec:polyloops}. 
The following is \cite[Proposition 3.4]{HJS12}. 

\begin{proposition}\label{prop:filtration quotient as Thom space}. 
Let \(r \in \Z\) and \(r \geq 0.\) The quotient space $F_{2r}/F_{2r-2}$ is
$G$-equivariantly homeomorphic to  $\Thom(\tau^{2r-1})$. In
particular, $F_2 \cong \Thom(\tau)$. 
\end{proposition}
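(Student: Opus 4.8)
The plan is to identify the open stratum $F_{2r}\setminus F_{2r-2}$ $G$-equivariantly with the total space of $\tau^{2r-1}:=\tau^{\oplus(2r-1)}$ over $\PP^1$, and then to deduce the statement about the quotient from a soft point-set argument. First I would record the following general fact: if $B$ is compact Hausdorff, $\xi$ a vector bundle over $B$, $X$ a compact Hausdorff $G$-space, $A\subseteq X$ a closed $G$-subspace, and there is a $G$-homeomorphism $X\setminus A\cong_G E(\xi)$, then $X/A\cong_G\Thom(\xi)=D(\xi)/S(\xi)$. Indeed the map $X\to\Thom(\xi)$ that agrees with the given homeomorphism on $X\setminus A\cong E(\xi)=\Thom(\xi)\setminus\{*\}$ and sends $A$ to the basepoint $*$ is $G$-equivariant and bijective, is continuous (using compactness of $\Thom(\xi)$), and hence is a homeomorphism since $X/A$ is compact and $\Thom(\xi)$ is Hausdorff. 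Since $F_{2r}=\Omega_{\poly,r}SU(2)$ is a compact $G$-space and $F_{2r-2}$ a closed $G$-subspace, the proposition reduces (for $r\ge1$) to producing a $G$-homeomorphism $F_{2r}\setminus F_{2r-2}\cong_G E(\tau^{\oplus(2r-1)})$.

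To build this homeomorphism I would use the leading Fourier coefficient. Writing a based polynomial loop as $\gamma(z)=\sum_{j=-r}^rA_jz^j$ with $A_j$ constant $2\times2$ matrices, the identity $\det\gamma(z)\equiv1$ forces $\det A_r=0$, while $\gamma\in F_{2r}\setminus F_{2r-2}$ forces $A_r\ne0$, so $A_r$ has rank exactly one; set $\ell(\gamma):=\image A_r\in\PP^1$. Pointwise conjugation gives $A_r(g\gamma g^{-1})=gA_rg^{-1}$, so $\ell\colon F_{2r}\setminus F_{2r-2}\to\PP^1$ is $G$-equivariant and surjective, and since $\PP^1=G/T$ is $G$-homogeneous, $\ell$ is automatically a fibre bundle with $F_{2r}\setminus F_{2r-2}\cong_G G\times_T\ell^{-1}([e_1])$. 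Everything then rests on the claim that the fibre $\ell^{-1}([e_1])$, whose stabiliser in $G$ is the maximal torus $T$, is $T$-equivariantly homeomorphic to $\C^{2r-1}$ with $T$ acting through the single character $b^{-2}$: for then, since $\tau\cong_G(\gamma^*)^2=G\times_T\C_{b^{-2}}$ by Corollary~\ref{cor:Cherntau}, the associated bundle is $G\times_T\C^{2r-1}_{b^{-2}}=\tau^{\oplus(2r-1)}$, and $F_{2r}\setminus F_{2r-2}\cong_G E(\tau^{\oplus(2r-1)})$.

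The fibre computation is the technical heart. Parametrising $\gamma\in SU(2)$ by its first column $(a,c)$, a pair of Laurent polynomials, the condition $\image A_r=\C e_1$ pins down the $z$-supports of $a$ and $c$, and the unitarity relation $a(z)\overline{a(z)}+c(z)\overline{c(z)}\equiv1$ --- together with the normalisations $a(1)=1$, $c(1)=0$ and the requirement that $\gamma$ genuinely have degree $r$ --- cuts out a bounded contractible semialgebraic set. Analysing the Fej\'er--Riesz factorisation of $1-c\bar c$ that recovers $a$ from $c$ (up to finitely many choices) one checks this set is homeomorphic to $\C^{2r-1}$; and since conjugation by $\mathrm{diag}(t,t^{-1})$ fixes $a$ and multiplies $c$ by $t^{-2}$, the induced $T$-action on the fibre is through the character $b^{-2}$, as claimed. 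For $r=1$ the fibre is literally an open $2$-disc whose boundary circle degenerates to the constant loop, and since $F_0=\{*\}$ this already gives $F_2\cong_G\Thom(\tau)$.

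I expect the main obstacle to be exactly this fibre identification: one must push the Fej\'er--Riesz normal form far enough to see that precisely $2r-1$ complex parameters survive, that the resulting semialgebraic set is a genuine cell (not merely homology-equivalent to one), and that the residual $T$-action is the untwisted weight-$b^{-2}$ action --- so as to pin the bundle down as $\tau^{\oplus(2r-1)}$ rather than some other $G$-bundle of the same rank over $\PP^1$. As an alternative one could run the same argument in the Grassmannian model of $\Omega_{\poly}SU(2)$ from \cite[Theorem 2.9]{HJS12}, where the filtration pieces are cut out by the linear-algebra conditions $z^rH_+\subseteq W\subseteq z^{-r}H_+$ and a $G$-equivariant affine paving of the relevant finite-dimensional Grassmannian should exhibit the disc-bundle structure more directly.
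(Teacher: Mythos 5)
The paper itself does not give a proof of Proposition~\ref{prop:filtration quotient as Thom space}; it simply cites \cite[Proposition~3.4]{HJS12}, so a line-by-line comparison is not possible. Your proposal takes a genuinely loop-theoretic route, working directly with Fourier coefficients, whereas \cite{HJS12} (as the paper's surrounding text and your own closing remark both suggest) works in the Grassmannian model. The outline of your argument is sound: the reduction via one-point compactification is correct, as is the observation that the leading coefficient $A_r$ of a polynomial loop $\gamma(z)=\sum_{j=-r}^{r}A_jz^j$ in $SU(2)$ satisfies $\det A_r=0$ and is nonzero precisely when $\gamma\in F_{2r}\setminus F_{2r-2}$, so that $\ell(\gamma)=\image A_r$ is a well-defined $G$-equivariant map to $\PP^1$, and the resulting $G$-homeomorphism $F_{2r}\setminus F_{2r-2}\cong_G G\times_T\ell^{-1}([e_1])$ is standard.

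The gap, which you correctly anticipate, is the identification of the fibre $\ell^{-1}([e_1])$ with $\C^{2r-1}$ carrying the weight-$b^{-2}$ action. Two issues deserve emphasis. First, the weight observation is only infinitesimal: writing the first column of $\gamma$ as $(a(z),c(z))$, conjugation by $\mathrm{diag}(w,w^{-1})$ fixes the coefficients of $a$ and multiplies those of $c$ by $w^{-2}$, which indeed suggests the weight is $b^{-2}$; but the fibre is a genuinely nonlinear subset cut out by the unitarity condition $|a|^2+|c|^2\equiv1$, so one must exhibit an actual $T$-equivariant homeomorphism onto a linear model, not merely read off a tangent weight at the fixed point. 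Second, already for $r=1$ the fibre is not literally a disc in $c$-coefficient space: the constraints $a_0+a_1=1$ and $a_0\bar a_1=|c_{-1}|^2$ cut out the real $2$-sphere $\{(a_1,c_{-1}):a_1(1-a_1)=|c_{-1}|^2\}$, and one obtains $\R^2$ only after deleting the excluded point $(a_1,c_{-1})=(0,0)$. Precisely because the Fej\'er--Riesz factorisation recovers $a$ from $c$ only up to a finite set of choices, the fibre for general $r$ will again consist of several branches glued along loci where the choices collide, and showing the result is a $T$-cell with a linear action is the real content of the claim. As written, the proposal is therefore incomplete at its central step. Your closing alternative --- arguing in the Grassmannian picture, where the filtration condition $z^rH_+\subseteq W\subseteq z^{-r}H_+$ makes the affine cell decomposition transparent --- is very likely the route taken in \cite{HJS12} and the easier one to push through.
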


\begin{remark}\label{remark:F2}
  It also follows  from the proof of
  \cite[Proposition 3.4]{HJS12} that, under the identification $F_2
  \cong \Thom(\tau)$, the `point at infinity' of the Thom space gets
  identified with the subspace~$F_0$ which is a single point consisting of
the constant map at the identity in $F_2 = \Omega_{\poly,1}SU(2)$. 
\end{remark}

\subsection{Equivariant Bott periodicity}\label{subsection:BottP}

Here we set some notation and recall 
a special case of the equivariant Bott periodicity theorem, which will be used in Section~\ref{sec:about PP^1^n}.
A reference for this section is ~\cite{AtiyahSegal:1965}.

Recall that the 
classical (non-equivariant) Bott periodicity theorem relates the $K$-theory of
the Cartesian product $X \times S^2 \cong X \times \PP^1$ with that of~$X$.
In the equivariant
case, the analogous theorem relates the equivariant
$K$-theory of $\PP(\xi \oplus \epsilon)$ with that of $X$, where $\xi$ is
an equivariant bundle over $X$. 
More specifically, let $H$ be a compact Lie group. Let $\xi$ be
a complex vector bundle 
over a base space $X$ equipped with an
action of~$H$.
When no confusion will arise,
we denote by $\xxi$ the class $[\xxi]$ in~$K^*_H(X)$.
In fact, we will consider the special case
in which $\xxi=\zzeta\oplus\epsilon$, where
$\zzeta$ is a complex line bundle equipped with an action of $H$. 
We already saw in Section~\ref{background:Atiyah} that, in this
situation, 
\[
\gamma_{\zzeta \oplus \epsilon}^* \otimes \bar{p}^*(\zzeta) \cong
\beta_{\zzeta \oplus \epsilon}
\]
or equivalently 
\[
\gamma_{\zzeta \oplus \epsilon} \cong \bar{p}^*(\zzeta) \otimes
\beta^*_{\zzeta \oplus \epsilon}
\]
where $\beta_{\zzeta \oplus \epsilon}$ is the (line) bundle found in
Lemma~\ref{lemma:beta}. Recalling that $\beta^*_{\zzeta \oplus
  \epsilon} \cong \beta^{-1}_{\zzeta \oplus \epsilon}$ for line
bundles, the defining equation~\eqref{basemainequation} becomes 
\begin{equation}\label{mainequationmod}
\bar{p}^*(\zzeta) \oplus \epsilon \cong (\bar{p}^*(\zzeta) \otimes
\beta^{-1}_{\zzeta \oplus \epsilon}) \oplus \beta_{\zzeta \oplus
  \epsilon}.
\end{equation}
This implies the equality 
\[
\bar{p}^*(\zzeta)+1 = \bar{p}^*(\zzeta)\beta^{-1}_{\zzeta \oplus
  \epsilon} + \beta_{\zzeta \oplus \epsilon}
\]
in ${K}^*_H(\PP(\zeta\oplus \epsilon))$. Some simple
manipulation then yields the relation 
\[
(\beta_{\zeta \oplus \epsilon} - \bar{p}^*(\zzeta))(\beta_{\zzeta
  \oplus \epsilon} - 1) = 0
\]
in ${K}^*_H(\PP(\zeta \oplus \epsilon))$, or equivalently, 
\[
(\gamma_{\zzeta \oplus \epsilon}-1)(\gamma_{\zzeta \oplus \epsilon} -
\bar{p}^*(\zzeta)) = 0. 
\]
We have just explicitly derived a relation in $K^*_H(\PP(\zzeta \oplus
\epsilon))$, but in fact, the equivariant Bott periodicity theorem
(cf. \cite[Theorem 2.7.1]{AtiyahSegal:1965} or \cite[Theorem 3.2]{Greenlees}),
applied to this case, states that this is in fact the only one. 
More precisely, we have the following. 
\begin{theorem}\label{Bott}
Let $H$ be a compact Lie group and let $\zzeta$ be an $H$-equivariant
complex line bundle over a base space $X$. Let $\gamma_{\zzeta \oplus
  \epsilon}, \beta_{\zzeta \oplus \epsilon}$ be as above. Then 
\[
K^*_H\bigl(\PP(\zzeta\oplus\epsilon)\bigr)\cong
\frac{K^*_H(X)[\gamma_{{\zzeta}\oplus\epsilon}]}
{(\gamma_{{\zzeta}\oplus\epsilon}-1)
(\gamma_{{\zzeta}\oplus\epsilon}-\bar{p}^*(\zzeta))}
\cong
\frac{K^*_H(X)[\beta_{\zzeta \oplus \epsilon}]}{(\beta_{\zzeta \oplus \epsilon}-\bar{p}^*(\zzeta))(\beta_{\zzeta \oplus \epsilon}-1)}
\]
as $R(H)$~algebras.
\end{theorem}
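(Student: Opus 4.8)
The plan is to deduce Theorem~\ref{Bott} from the general equivariant Bott periodicity statement for projective bundles, of which I would use the form given in \cite[Theorem 2.7.1]{AtiyahSegal:1965}: for an $H$-equivariant vector bundle $\eta$ of rank $m$ over $X$, the ring $K^*_H(\PP(\eta))$ is a free $K^*_H(X)$-module with basis $1, \gamma_\eta, \dots, \gamma_\eta^{m-1}$, and the single relation is $\sum_{i=0}^{m}(-1)^i [\Lambda^i \eta]\,\gamma_\eta^{i}=0$, where $\gamma_\eta$ denotes the canonical line bundle. First I would specialize this to $\eta = \zzeta\oplus\epsilon$ with $\zzeta$ a line bundle (so $m=2$), whereupon the relation becomes $1 - ([\zzeta] + [\epsilon])\,\gamma_{\zzeta\oplus\epsilon} + [\Lambda^2(\zzeta\oplus\epsilon)]\,\gamma_{\zzeta\oplus\epsilon}^2 = 0$; using $[\epsilon]=1$ and $[\Lambda^2(\zzeta\oplus\epsilon)] = [\zzeta]$, and writing $\bar p^*(\zzeta)$ for the pullback class, this is exactly $1 - (1+\bar p^*(\zzeta))\gamma_{\zzeta\oplus\epsilon} + \bar p^*(\zzeta)\gamma_{\zzeta\oplus\epsilon}^2 = 0$, which factors as $(\gamma_{\zzeta\oplus\epsilon}-1)(\gamma_{\zzeta\oplus\epsilon} - \bar p^*(\zzeta))=0$ after dividing by the unit $\bar p^*(\zzeta)^{-1}$ (note $\bar p^*(\zzeta)$ is a line bundle, hence a unit in $K$-theory). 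This gives the first isomorphism in the statement.

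Second, I would establish the second isomorphism by the change of generators $\beta_{\zzeta\oplus\epsilon} = \bar p^*(\zzeta)\otimes\gamma^*_{\zzeta\oplus\epsilon}$, equivalently $\gamma_{\zzeta\oplus\epsilon} = \bar p^*(\zzeta)\otimes\beta^*_{\zzeta\oplus\epsilon} = \bar p^*(\zzeta)\,\beta^{-1}_{\zzeta\oplus\epsilon}$, which is exactly the relation derived from Lemma~\ref{lemma:beta} in the discussion preceding the theorem. Since $\bar p^*(\zzeta)$ is a unit, this substitution is an isomorphism of $K^*_H(X)$-algebras carrying $K^*_H(X)[\gamma_{\zzeta\oplus\epsilon}]$ to $K^*_H(X)[\beta_{\zzeta\oplus\epsilon}]$, and I would simply check that it carries the relation $(\gamma_{\zzeta\oplus\epsilon}-1)(\gamma_{\zzeta\oplus\epsilon}-\bar p^*(\zzeta))=0$ to $(\beta_{\zzeta\oplus\epsilon}-\bar p^*(\zzeta))(\beta_{\zzeta\oplus\epsilon}-1)=0$: substituting $\gamma = \bar p^*(\zzeta)\beta^{-1}$ gives $(\bar p^*(\zzeta)\beta^{-1}-1)(\bar p^*(\zzeta)\beta^{-1}-\bar p^*(\zzeta)) = \bar p^*(\zzeta)\beta^{-2}(\bar p^*(\zzeta) - \beta)(1-\beta)\cdot(\pm 1)$, and since the prefactor is a unit the vanishing of this product is equivalent to $(\beta_{\zzeta\oplus\epsilon}-\bar p^*(\zzeta))(\beta_{\zzeta\oplus\epsilon}-1)=0$, as claimed.

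The only genuine subtlety — really the main point to be careful about — is matching conventions: the cited form of equivariant Bott periodicity is most often stated for $\PP(\eta)$ with the $\lambda_{-1}$-type relation $\sum (-1)^i[\Lambda^i\eta]\gamma^i=0$, and one must be sure whether $\gamma_\eta$ there denotes the tautological subbundle or its dual, since that choice swaps the roles of $\gamma$ and $\beta$ throughout. I would therefore pin down the convention so that it is consistent with Section~\ref{background:Atiyah} (where $\gamma_{\zzeta\oplus\epsilon}$ is the canonical/tautological line subbundle of $\bar p^*(\zzeta\oplus\epsilon)$ and $\beta_{\zzeta\oplus\epsilon}$ is its orthogonal complement from Lemma~\ref{lemma:beta}), and note that with that convention the self-consistency of the relation already derived before the theorem statement (namely $(\gamma_{\zzeta\oplus\epsilon}-1)(\gamma_{\zzeta\oplus\epsilon}-\bar p^*(\zzeta))=0$) serves as a sanity check that no sign or duality error has crept in. Beyond this bookkeeping the argument is routine: all of it is a specialization of a known theorem to rank $2$ together with the elementary identity $[\Lambda^2(\zzeta\oplus\epsilon)]=[\zzeta]$ and an invertible change of generators.
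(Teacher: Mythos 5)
Your overall strategy coincides with the paper's: both you and the authors appeal to the general equivariant projective bundle theorem (Atiyah--Segal / Greenlees) for the freeness and completeness of the relation, and both arrive at the factored quadratic $(\gamma_{\zzeta\oplus\epsilon}-1)(\gamma_{\zzeta\oplus\epsilon}-\bar p^*(\zzeta))=0$, with the second presentation following from the unit change of variables $\gamma_{\zzeta\oplus\epsilon}=\bar p^*(\zzeta)\,\beta^{-1}_{\zzeta\oplus\epsilon}$; that second half of your argument is fine. However, the displayed algebra in your first step is not internally consistent and hits exactly the convention trap you flag at the end. The relation you write down, $1-(1+\bar p^*(\zzeta))\gamma+\bar p^*(\zzeta)\gamma^2=0$, factors as $(\gamma-1)(\bar p^*(\zzeta)\gamma-1)=0$, which after dividing out the unit $\bar p^*(\zzeta)$ is $(\gamma-1)(\gamma-\bar p^*(\zzeta)^{-1})=0$ --- \emph{not} $(\gamma-1)(\gamma-\bar p^*(\zzeta))=0$ as claimed. (Indeed $(\gamma-1)(\gamma-\bar p^*(\zzeta))=\gamma^2-(1+\bar p^*(\zzeta))\gamma+\bar p^*(\zzeta)$, whose constant and leading coefficients differ from yours.) The source of the discrepancy is that the $\lambda_{-1}$-type relation in the projective bundle theorem is $\sum_i(-1)^i[\Lambda^i\eta]\,(\gamma^*_\eta)^i=0$ in terms of the \emph{dual} of the tautological subbundle, not $\gamma_\eta$ itself (since it comes from $\lambda_{-1}(\gamma^*_\eta\otimes\bar p^*\eta)=0$, using $\gamma_\eta\subset\bar p^*\eta$). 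With $\gamma^*$ in place of $\gamma$ your specialization and factoring do produce $(\gamma^*-1)(\gamma^*-\bar p^*(\zzeta)^{-1})=0$, and passing to the tautological subbundle via $\gamma=(\gamma^*)^{-1}$ recovers $(\gamma-1)(\gamma-\bar p^*(\zzeta))=0$ as required. You anticipated this pitfall, but the written computation itself commits the error rather than resolving it.

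For comparison, the paper's derivation of the relation is convention-free and thus avoids the issue: it starts from the $H$-bundle splitting $\bar p^*(\zzeta)\oplus\epsilon\cong\gamma_{\zzeta\oplus\epsilon}\oplus\beta_{\zzeta\oplus\epsilon}$ of Lemma~\ref{lemma:beta}, identifies $\beta_{\zzeta\oplus\epsilon}\cong\gamma^*_{\zzeta\oplus\epsilon}\otimes\bar p^*(\zzeta)$ via Chern classes, reads off the $K$-theoretic identity $\bar p^*(\zzeta)+1=\bar p^*(\zzeta)\beta^{-1}_{\zzeta\oplus\epsilon}+\beta_{\zzeta\oplus\epsilon}$, and manipulates this directly into $(\beta_{\zzeta\oplus\epsilon}-\bar p^*(\zzeta))(\beta_{\zzeta\oplus\epsilon}-1)=0$; only then does it invoke the cited theorem to assert that this is the sole relation. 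If you want your route to be airtight, replace the $\Lambda$-formula step by this geometric derivation, or fix the exponent on the canonical generator in the cited relation; either way the rest of your write-up (the computation of $\Lambda^2(\zzeta\oplus\epsilon)=\zzeta$ and the change of generators to $\beta_{\zzeta\oplus\epsilon}$) goes through.
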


We will also use the following (cf. \cite[Theorem
6.1.4]{AtiyahSegal:1965} or \cite[Theorem 3.1]{Greenlees}), which is a
special case of the equivariant Thom isomorphism theorem. 

\begin{theorem} \label{thomisom} 
Let $H$ be a compact Lie group and let $\zzeta$ be an $H$-equivariant
complex line bundle over a base space $X$. Let $U = U_\zzeta$ denote
the equivariant Thom class associated to $\zzeta$ as in~\eqref{def
  eqvt Thom class}. Then 
the multiplication map $x\mapsto U\cdot \bar{p}^*(x)$ defines an isomorphism
$$K_H(X)\cong U\cdot {\bar{p}}^*\bigl(K_H(X)\bigr)
\cong \Ker j^*\cong\tilde{K}_H\bigl(\Thom(\zzeta)\bigr).$$
\end{theorem}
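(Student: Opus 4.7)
The plan is to deduce this special case of the Thom isomorphism directly from the Bott periodicity presentation of $K^*_H(\PP(\zzeta\oplus\epsilon))$ given in Theorem~\ref{Bott} together with the explicit formula $U_\zzeta=1-\beta_{\zzeta\oplus\epsilon}$ from Lemma~\ref{lemma:compute Uzzeta}. Throughout I will abbreviate $\beta=\beta_{\zzeta\oplus\epsilon}$ and $\gamma=\gamma_{\zzeta\oplus\epsilon}$ and write elements of $K^*_H(X)$ inside $K^*_H(\PP(\zzeta\oplus\epsilon))$ via $\bar{p}^*$ without notational comment.

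First I would rewrite the Bott presentation in the $\beta$-form given in Theorem~\ref{Bott}. The relation $(\beta-\bar{p}^*(\zzeta))(\beta-1)=0$ together with the fact that the classes $1$ and $\beta$ generate shows that $K^*_H(\PP(\zzeta\oplus\epsilon))$ is a free $K^*_H(X)$-module of rank $2$ with basis $\{1,\beta\}$. Next I would compute $j^*(\beta)$. Since $\zzeta$ is a line bundle, $\tilde{p}=\bar{p}\circ j$ is the identity on $X=\PP(\zzeta)$, so $j^*(\gamma)\cong\gamma_\zzeta\cong\zzeta$ as noted in the remark following Lemma~\ref{lemma:compute Uzzeta}. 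Combined with $\gamma\cong\bar{p}^*(\zzeta)\otimes\beta^{-1}$ (the dualized form of the identity leading to \eqref{mainequationmod}), this forces $j^*(\beta)=1$. Consequently $j^*$ sends $a\cdot 1+b\cdot\beta\mapsto a+b$ in $K^*_H(X)$, so
\[
\Ker j^*=\{\,a+b\beta : a+b=0\,\}=K^*_H(X)\cdot(\beta-1)=K^*_H(X)\cdot U_\zzeta,
\]
where the last equality uses $U_\zzeta=1-\beta=-(\beta-1)$.

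With this in hand, the Thom isomorphism follows formally. The multiplication map $\mu:K^*_H(X)\to\Ker j^*$, $x\mapsto U_\zzeta\cdot\bar{p}^*(x)$, is surjective by the displayed identification of $\Ker j^*$ as a cyclic $K^*_H(X)$-module generated by $U_\zzeta$. For injectivity, if $U_\zzeta\cdot\bar{p}^*(x)=\bar{p}^*(x)\cdot 1-\bar{p}^*(x)\cdot\beta=0$, then because $\{1,\beta\}$ is a free $K^*_H(X)$-basis, one gets $\bar{p}^*(x)=0$, and $\bar{p}^*$ is injective (it is a section of a retraction, being split by the composition $j^*\circ(\text{multiplication by }1)$, or equally by the basis decomposition itself). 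Hence $x=0$, and $\mu$ is an isomorphism. Combining with the identification $\tilde{K}_H(\Thom(\zzeta))\cong\Ker j^*$ from Lemma~\ref{lemma:identify kernel j} gives the theorem.

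I do not foresee any genuine obstacle here: the main work has already been absorbed into Bott periodicity (Theorem~\ref{Bott}) and the explicit identification of $U_\zzeta$ (Lemma~\ref{lemma:compute Uzzeta}). The only point requiring a small amount of care is the computation $j^*(\beta)=1$, which relies on chasing the relation $\gamma\cong\bar{p}^*(\zzeta)\otimes\beta^{-1}$ through $j^*$ and using $\tilde{p}=\mathrm{id}$; once that is verified, the rank-two freeness from Bott periodicity immediately pins down $\Ker j^*$ as the free cyclic submodule generated by the Thom class, and both surjectivity and injectivity of multiplication by $U_\zzeta$ are then transparent.
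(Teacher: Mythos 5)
Your proof is correct, but note that the paper itself supplies no proof of Theorem~\ref{thomisom}: it is stated as a citation (to Atiyah--Segal and Greenlees) of the general equivariant Thom isomorphism specialized to a line bundle. Your derivation from Theorem~\ref{Bott} together with Lemma~\ref{lemma:compute Uzzeta} is essentially the standard route those references take, and every step checks out against facts already established in Section~\ref{background:Atiyah} and Section~\ref{subsection:BottP}: the rank-two freeness of $K^*_H\bigl(\PP(\zzeta\oplus\epsilon)\bigr)$ over $K^*_H(X)$ on the basis $\{1,\beta\}$ is immediate from the monic degree-two relation in Theorem~\ref{Bott}; the computation $j^*(\beta)=1$ you outline is precisely what the remark after Lemma~\ref{lemma:compute Uzzeta} records when it shows $j^*(1-\beta_{\zzeta\oplus\epsilon})=0$; the identification of $\Ker j^*$ as the free cyclic $K^*_H(X)$-module on $U_\zzeta=1-\beta$ then follows by the linear-algebra argument you give; and split-injectivity of $\bar{p}^*$ comes from $j^*\circ\bar{p}^*=\tilde{p}^*=\mathrm{id}$ (since $\tilde{p}$ is the identity in the line bundle case), though, as you also observe, freeness on the basis $\{1,\beta\}$ already gives it directly. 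There is no circularity, since Bott periodicity (the projective bundle theorem) is proved independently of the Thom isomorphism in the cited sources. One small stylistic remark: your phrase about $\bar{p}^*$ being ``split by the composition $j^*\circ(\text{multiplication by }1)$'' would be cleaner stated simply as $j^*\circ\bar{p}^*=\mathrm{id}_{K^*_H(X)}$.
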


\section{The equivariant cohomology and equivariant $K$-theory 
of~$(\PP^1)^n$}\label{sec:about PP^1^n}

The main purpose of this section is two-fold. First, we define a
$G$-equivariant continuous map $\Phi_{2r}: (\PP^1)^{2r} \to F_{2r}$.
Second, we give convenient presentations of both $K^*_T((\PP^1)^n)$
(respectively $K^*_G((\PP^1)^n)$) and $H^*_T((\PP^1)^n;\Q)$ (respectively
$H^*_G((\PP^1)^n;\Q)$) via generators and relations. In later sections,
we show that the pullback maps 
\[
\Phi_{2r}^*: H^*_T(F_{2r};\Q) \to H^*_T((\PP^1)^{2r};\Q)
\]
(respectively from $H^*_G(F_{2r};\Q)$ to $H^*_G((\PP^1)^{2r};\Q)$) and 
\[
\Phi_{2r}^*: K^*_T(F_{2r}) \to K^*_T((\PP^1)^{2r})
\]
(respectively from $K^*_G(F_{2r})$ to $K^*_G((\PP^1)^{2r})$) are
injective, and use the concrete presentations of the codomains given
here, in order to give explicit computations of $H^*_G(F_{2r};\Q)$ and,
most importantly, 
$K^*_G(F_{2r})$. This is a key step towards our main goal, which is
the computation of the algebra structure of $K^*_G(\Omega G)$.

We begin with the construction of the maps $\Phi_{2r}$. 
Let $\tau$ be the $G=SU(2)$-equivariant bundle of 
Definition~\ref{definition:tau}. 
We begin with an equivariant identification of the $G$-spaces
$\PP(\tau~\oplus~\epsilon)$ and $\PP^1 \times \PP^1$, where $G$ acts
on $\PP^1 \times \PP^1$ via the diagonal action. (The
non-equivariant version of the statement below is of course standard.
We choose to record the equivariant version since we did not find a
reference in the literature.) We prove the lemma by constructing an
explicit homeomorphism which can easily be seen to be $G$-equivariant.

\begin{lemma}\label{lemma:PtauplusEpsilon-prodP1s}
There exists a $G$-equivariant homeomorphism
$\Theta: \PP(\tau \oplus \epsilon) \to \PP^1 \times \PP^1 $.
Under this homeomorphism, the subspace $\PP(\tau)$ corresponds to the diagonal
in~$\PP^1\times\PP^1$, and the bundle projection
$\PP(\tau\oplus\epsilon) \to \PP^1$
corresponds to the projection of $\PP^1\times \PP^1$ onto the first factor.
\end{lemma}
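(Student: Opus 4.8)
The plan is to exhibit an explicit homeomorphism and then verify its properties directly. Recall that $\tau$ has total space $E(\tau) = \{(u,v) \mid u \in S^3 \subset \C^2, v \in u^\perp\}/\sim$ with $(u,v)\sim(\zeta u,\zeta v)$, and by Remark~\ref{rem:Milnor} (equation~\eqref{eq:tau plus epsilon is v times gammadual}) we have $\tau \oplus \epsilon \cong v\otimes \gamma^*$. Working from the description of $\PP(\xxi\oplus\epsilon)$ used in the proof of Proposition~\ref{p:atiyah}, a point of $\PP(\tau\oplus\epsilon)$ over $[u]\in\PP^1$ is represented by a pair $(v,w)$ with $v\in u^\perp$, $w\in\C$, not both zero, modulo the diagonal $S^1$-action and scaling. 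First I would write down the map $\Theta: \PP(\tau\oplus\epsilon)\to\PP^1\times\PP^1$ sending the class of $\bigl([u],(v,w)\bigr)$ to $\bigl([u],[wu + v]\bigr)$, where $wu+v\in\C^2$ is nonzero because $u$ and $v$ are orthogonal and $(v,w)\neq(0,0)$. One checks that $wu+v$ is well-defined up to scalar under the equivalence relations, so $\Theta$ is a well-defined continuous map.

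Next I would construct the inverse. Given $\bigl([u],[y]\bigr)\in\PP^1\times\PP^1$ with $u\in S^3$, decompose $y = w u + v$ with $w = \langle y,u\rangle$ (so $w$ is the component along $u$) and $v = y - wu \in u^\perp$; then $\bigl([u],(v,w)\bigr)$ gives a point of $\PP(\tau\oplus\epsilon)$, and I would check this is independent of the choices of representatives $u\in S^3$ over $[u]$ and $y$ over $[y]$, and that it is continuous and inverse to $\Theta$. This establishes that $\Theta$ is a homeomorphism.

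Then there are three properties to verify, all by inspection of the formula. For $G$-equivariance: $g\in SU(2)$ acts by $u\mapsto gu$, $v\mapsto gv$ on $E(\tau)$ (hence diagonally on $\tau\oplus\epsilon$, trivially on the $\epsilon$ factor), and on $\PP^1\times\PP^1$ diagonally; since $g(wu+v) = w(gu)+(gv)$ and $g$ preserves orthogonality and the Hermitian metric, $\Theta$ intertwines the two actions. For the diagonal: $\PP(\tau)$ consists of points with $w=0$, i.e. classes $\bigl([u],(v,0)\bigr)$, which map to $\bigl([u],[v]\bigr)$ — but wait, that is not the diagonal. Here I must be careful: $\PP(\tau)\subset\PP(\tau\oplus\epsilon)$ is actually the locus $w = 0$... no — reconsidering, $\PP(\tau)$ is the image of $\tau$ under $j$, which in the $(v,w)$ coordinates is $\{w=0\}$, mapping to $\bigl([u],[v]\bigr)$ with $v\perp u$; that is the \emph{anti-diagonal}, so I would instead use the map $\bigl([u],(v,w)\bigr)\mapsto\bigl([u],[\bar w u + \lambda v]\bigr)$ or simply compose with the automorphism of the second $\PP^1$; the cleanest fix is to send $\bigl([u],(v,w)\bigr)\mapsto\bigl([u],[v + w u']\bigr)$ where... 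The honest statement is that the section at infinity $w=\infty$ (i.e. $v=0$) maps to the diagonal $\bigl([u],[u]\bigr)$, and I would either relabel which subspace is called $\PP(\tau)$ per the conventions of Proposition~\ref{p:atiyah} and Remark~\ref{remark:F2}, or precompose with the involution swapping the section and zero section; the key obstacle in writing this up cleanly is pinning down these identifications so that ``$\PP(\tau)$ corresponds to the diagonal'' is literally true rather than true up to an evident automorphism. Finally, the statement about bundle projections is immediate since both sides send a point to $[u]$, so $\Theta$ commutes with projection to the first $\PP^1$ factor by construction.

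The main obstacle, then, is not the existence of the homeomorphism — that is a routine chart computation — but the bookkeeping of orthogonal-complement versus diagonal conventions so that the two auxiliary claims (about $\PP(\tau)$ and about the projection) come out exactly as stated; once the correct normalization of $\Theta$ is fixed, equivariance is transparent from the formula.
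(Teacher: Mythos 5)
Your explicit chart-based construction is a genuine alternative to what the paper does (the paper factors the $SU(2)$-action through $SO(3)$, identifies $\tau$ with $TS^2$, and uses stereographic projection $(p,q)\mapsto(p,\St_p(q))$ so that the diagonal is geometrically forced to be the locus at infinity). Your map $\Theta\colon\bigl([u],(v,w)\bigr)\mapsto\bigl([u],[wu+v]\bigr)$ is well-defined (both equivalence relations are respected, as one checks directly), continuous, $G$-equivariant, compatible with the projections to the first factor, and your inverse via $w=\langle y,u\rangle$, $v=y-wu$ works. So the "routine chart computation" part is fine.

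But the proposal has a genuine unresolved gap, and it is exactly the one you flag. With the conventions of Proposition~\ref{p:atiyah}, the subspace $\PP(\tau)\subset\PP(\tau\oplus\epsilon)$ is the locus $\{w=0\}$, and your formula sends $\bigl([u],(v,0)\bigr)$ to $\bigl([u],[v]\bigr)$ with $v\perp u$ --- the \emph{anti}-diagonal $\{([u],[u^\perp])\}$, not the diagonal. This is not a cosmetic ambiguity: Lemma~\ref{PhiL} later uses $\Theta\circ j$ being the \emph{diagonal} inclusion to conclude $(\Theta\circ j)^*(L_1\otimes L_2)=\gamma^{-1}\otimes\gamma=1$ and hence $m=n$. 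With the anti-diagonal one would instead get $(\Theta\circ j)^*(L_1\otimes L_2)=\gamma^{-2}$, breaking that argument. So "relabel which subspace is called $\PP(\tau)$" is not a legitimate fix, and "compose with an automorphism of the second $\PP^1$" is only a fix if you identify which automorphism and verify equivariance --- which you do not. The repair is available and essentially unique: post-compose $\Theta$ with $\mathrm{id}\times a$ where $a\colon\PP^1\to\PP^1$ is the antipodal involution $[y]\mapsto[y^\perp]$ (in coordinates $[z_1:z_2]\mapsto[-\bar z_2:\bar z_1]$). This $a$ is $G$-equivariant (for $g\in SU(2)$, $g$ preserves Hermitian orthogonality, so $[(gy)^\perp]=[g(y^\perp)]$), it is in fact the only nontrivial $G$-equivariant self-homeomorphism of $\PP^1$, and it swaps the diagonal and anti-diagonal; the corrected map $\Theta'=(\mathrm{id}\times a)\circ\Theta$ then sends $\{w=0\}$ to the diagonal and $\{v=0\}$ to the anti-diagonal as required. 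Since you did not supply this (or any) concrete correction and instead signed off with "the key obstacle is pinning down these identifications," the proof as written is incomplete precisely at the one point where the lemma's statement has content beyond "some homeomorphism exists."

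Note also that $a$ is not holomorphic, but the lemma only asserts a $G$-equivariant homeomorphism, so this costs nothing; the paper's stereographic route sidesteps the whole normalization issue because in that picture the diagonal is automatically the set of removed points, but your approach would be equally clean once $\Theta'$ is used in place of $\Theta$.
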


\begin{proof} 
Recall that the center $\{\pm 1\} \cong \Z/2\Z$ of $SU(2)$ acts trivially on both
$\PP(\tau \oplus \epsilon)$ and $\PP^1 \times \PP^1$, hence 
the $SU(2)$ action factors
through an $SO(3)$-action. It is
well-known that the tangent bundle $\tau$ to $\PP^1$ is
$SO(3)$-equivariantly homeomorphic to the tangent bundle $TS^2$
of the
unit sphere $S^2$ in $\R^3$. Here we think of $\R^3$ as the Lie
algebra of $SU(2)$ equipped with an invariant metric and the action
of $SO(3)$ on $TS^2$ is induced from the standard action of $SO(3)$ 
on~$\R^3$. Similarly $\PP^1 \times \PP^1$ is equivariantly homeomorphic
to $S^2 \times S^2$ equipped with the standard diagonal action of
$SO(3)$. 

Thus, to prove the claim it suffices to
exhibit an $SO(3)$-equivariant homeomorphism between 
$\PP(TS^2 \oplus \epsilon)$ and $S^2 \times S^2$. Denote by $\Delta$
the diagonal in $S^2 \times S^2$ and recall that the space
$\PP(TS^2 \oplus \epsilon) \smallsetminus \PP(TS^2)$ can be equivariantly
identified with 
(the total space of) $TS^2$
(see diagram (\ref{thomdiagram}). For $p \in S^2$ denote by $\St_p: S^2
\smallsetminus \{p\} \to \langle p \rangle^{\perp}$ the usual stereographic
projection from $p$ to the real $2$-plane orthogonal to $p$. Consider the map 
\begin{equation}\label{eq:stereographic}
(S^2 \times S^2) \smallsetminus \Delta \to TS^2, \quad (p,q) \mapsto \bigl(p, \St_p(q)\bigr)
\end{equation}
where we think of $TS^2$ as pairs $(x,y)$ in $\R^3$ such that $x \in
S^2$ and $y$ lies in the plane orthogonal to $x$. 
For $g \in SO(3)$, note that the usual stereographic projection
satisfies $\St_{gp}(gq) = g \St_p(q)$, so the map~\eqref{eq:stereographic} is
$SO(3)$-equivariant, and since each $\St_p$ is a homeomorphism, it
follows that~\eqref{eq:stereographic} is also a homeomorphism.

The space $\PP(TS^2 \oplus \epsilon)$ is obtained from the bundle $TS^2$ by
taking the one-point compactification of each fiber. Denote by
$\infty_p$ the point at infinity in the fiber of $\PP(TS^2 \oplus
\epsilon)$ over~$p$. The $SO(3)$ action  fixes each $\infty_p$,
and it is straightforward to 
check that the map~\eqref{eq:stereographic} can be equivariantly and
continuously extended to a map
\[
S^2 \times S^2 \to \PP(TS^2 \oplus \epsilon) 
\]
by defining $(p,p) \mapsto (p, \infty_p)$ for each $p \in S^2$. This
extension is evidently a bijection. A continuous bijection from a
compact space to a Hausdorff space is a homeomorphism, so we have constructed
the desired $G$-homeomorphism.
The two  properties of this homeomorphism specified in the last
sentence of the statement of the lemma follow from
the construction. 
\end{proof} 

We now define a $G$-equivariant continuous map $\Phi_{2r}:
(\PP^1)^{2r} \to F_{2r}$ as follows. First, recall that from 
Proposition~\ref{prop:filtration quotient as Thom space} we know that $F_2/F_0 \cong F_2$ is 
$G$-equivariantly homeomorphic to the Thom space
$\Thom\bigl(\tau\bigr)$. Second, we know $\PP(\tau \oplus
\epsilon)/\PP(\tau) \cong_G \Thom(\tau)$ by
Proposition~\ref{p:atiyah}. Putting these together with
the equivariant homeomorphism $\PP^1 \times \PP^1 \cong_G \PP(\tau
\oplus \epsilon)$ given in Lemma~\ref{lemma:PtauplusEpsilon-prodP1s}
we obtain the map $\Phi_{2r}$. More precisely, we have the following. 

\begin{definition}\label{definition:Phi 2r}
Let $\Phi_{2r}: (\PP^1)^{2r} \cong (\PP^1 \times \PP^1)^r \to F_{2r}$ denote the composition
\begin{equation}\label{eq:definition Phi 2r}
(\PP^1 \times \PP^1)^{r} \rTo
\bigl(\PP(\tau\oplus\epsilon)\bigr)^r\rTo \Thom(\tau)^r \rTo
  (F_2)^r\rTo F_{2r}
\end{equation}
where the first map is (the $r$-fold product of) the map $\Theta^{-1}$ from Lemma~\ref{lemma:PtauplusEpsilon-prodP1s},
the second is the ($r$-fold product of) the quotient map $\PP(\tau
\oplus \epsilon) \to \PP(\tau \oplus \epsilon)/\PP(\tau)$ together
with the identification in Proposition~\ref{p:atiyah}, the third is
the ($r$-fold product of the) map from Proposition~\ref{prop:filtration quotient as Thom space},
and the final arrow is induced from matrix multiplication as in~\eqref{matmult}. 
\end{definition} 

Since all the maps in the definition are continuous, the composition
$\Phi_{2r}$ is also continuous. The last arrow is $G$-equivariant
since the action of $G$ on $\Omega_{\poly}G$ is by conjugation and the
map is induced by multiplication. All the other maps are already known
to be equivariant, so the composition $\Phi_{2r}$ is also
$G$-equivariant.

We now turn our attention to a computation of $K^*_T((\PP^1)^n)$
(respectively $K^*_G((\PP^1)^n)$) and $H^*_T((\PP^1)^n;\Q)$ (respectively
$H^*_G((\PP^1)^n;\Q)$) for any positive integer $n$. We accomplish this
by an inductive argument that computes $K^*_T(X \times \PP^1)$
(respectively $H^*_T(X \times \PP^1;\Q)$) in terms of~$K^*_T(X)$
(respectively $H^*_T(X;\Q)$) for a general $G$-space $X$, using
equivariant Bott periodicity. Computations of $K^*_G$ and $H^*_G$ then
follow by taking Weyl invariants. 

Let $X$ be a $G$-space. By restricting the action, $X$ is then also a
$T$-space. 
Recall that $b \in R(T) \cong K_T(\pt)$ denotes the
standard $1$-dimensional representation of $T=S^1$ of weight $1$ and
that 
$v$ denotes the standard representation of $G=SU(2)$ on $\C^2$. As a
$T$-representation, $v \cong b \oplus b^{-1}$. 
Recall also that 
any $T$-representation
$\rho$ may also be viewed as an $T$-equivariant bundle over
a point~$\pt$. Let $\rho_X$ denote the pullback of $\rho$ to $X$ via
the constant map $X \to \pt$. This is a topologically trivial, but
equivariantly non-trivial, bundle over $X$. (Indeed, it is this map
$R(T) \to K^*_T(X), \rho \mapsto \rho_X$, which defines the $R(T)$-module
structure on $K^*_T(X)$. By slight abuse of notation, when considering
$\rho_X$ as an element in $K^*_T(X)$, we will sometimes denote it
simply by $\rho$.) 
In particular, the total space $E(\rho_X)$ of $\rho_X$ is
simply $X \times E(\rho)$ equipped with the diagonal $T$-action. Similarly, 
$\PP(\rho_X)\cong_T X \times \PP(\rho)$. 
In the case $\rho=b\oplus b^{-1}$, we identify 
$\PP\bigl((b \oplus b^{-1})_X\bigr)$ as a $T$-space
with $X \times \PP^1$ where the action of $T$ on $\PP^1$ is the standard one. 
Since $[zx:z^{-1}y]=[z^{k+1}x:z^{k-1}y]\in\PP^1$ for any $z \in T$, we have 
\begin{equation}\label{eq:Pb2 plus epsilon is P1}
\PP(b^2 \oplus \epsilon)\cong\PP(b\oplus b^{-1})\cong\PP^1
\end{equation}
as $T$-spaces.

We now apply the equivariant Bott periodicity results discussed in
Section~\ref{subsection:BottP} 
to the case where $H=T$ and  
$\zzeta$ is the bundle ~$b^2_X=b_X \otimes b_X$ over $X$, making use of the fact that
$X\times\PP^1\cong_T \PP(b_X^2\oplus\epsilon)$.
In the following,
 we denote by $b$ the image of $b \in R(T)$ under the map $R(T)
\to K^*_T\bigl(\PP(b_{\PP(b^2_X \oplus \epsilon)}^2 \oplus
\epsilon)\bigr)$. 
For the discussion it is useful to define the bundle 
\begin{equation}\label{def:calL}
\calL_X:=b^{-1}\otimes
\beta_{b^2_X \oplus \epsilon}
\end{equation}
over $\PP(b^2_X \oplus \epsilon)$.

We begin with the following.

\begin{lemma} 
The bundle $\calL_X$ of~\eqref{def:calL} has the property
\begin{equation}\label{Lequationmod}
\calL_X^2\oplus\epsilon\cong (b\otimes\calL_X)\oplus (b^{-1}\otimes\calL_X).
\end{equation}
In $K^*_T\bigl(\PP(b_X^2 \oplus \epsilon)\bigr)$, the (equivariant
$K$-theory class of the) bundle $\calL_X$ satisfies the relation 
\begin{equation}\label{Lequation}
(\calL_X -b)(\calL_X -b^{-1}) = 0
\end{equation}
\end{lemma}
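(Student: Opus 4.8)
The plan is to derive both statements directly from the defining equation~\eqref{basemainequation} for $\beta_{\zzeta \oplus \epsilon}$ applied to the specific line bundle $\zzeta = b_X^2$, together with the definition $\calL_X = b^{-1} \otimes \beta_{b_X^2 \oplus \epsilon}$. The first step is to record what~\eqref{basemainequation} says in this case: since $\bar{p}^*(b_X^2)$ is the pullback to $\PP(b_X^2 \oplus \epsilon)$ of the equivariantly-trivial-but-not-trivial bundle $b^2$ over $X$ (which is just $b^2$ as an element of $K^*_T$), we have $\gamma_{b_X^2 \oplus \epsilon} \oplus \beta_{b_X^2 \oplus \epsilon} \cong b^2 \oplus \epsilon$ (here identifying $\bar p^*$ of a pulled-back representation bundle with that representation, as the text does). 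Tensoring both sides by $b^{-2}$ and using $\calL_X^2 = b^{-2} \otimes \beta_{b_X^2 \oplus \epsilon}^{\otimes 2}$ is not quite the right move; instead I would first establish that $\gamma_{b_X^2 \oplus \epsilon} \cong b^2 \otimes \beta_{b_X^2 \oplus \epsilon}^*$ — which is exactly the consequence of Lemma~\ref{lemma:compute Uzzeta}'s proof, namely $\beta_{\zzeta \oplus \epsilon} \cong \gamma^*_{\zzeta \oplus \epsilon} \otimes \bar p^*(\zzeta)$ — so that the two line bundles $\gamma_{b_X^2 \oplus \epsilon}$ and $\beta_{b_X^2 \oplus \epsilon}$ multiply to $b^2$.

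For the bundle-level statement~\eqref{Lequationmod}, I would tensor the identity $\beta_{b_X^2 \oplus \epsilon} \oplus \gamma_{b_X^2 \oplus \epsilon} \cong b^2 \oplus \epsilon$ by $b^{-1} \otimes \beta_{b_X^2 \oplus \epsilon}^* \cong b^{-1} \otimes \gamma_{b_X^2 \oplus \epsilon} \otimes b^{-2} = b^{-3} \otimes \gamma_{b_X^2 \oplus \epsilon}$; more cleanly, since $\gamma \otimes \beta \cong b^2$ (suppressing subscripts), we have $\beta^* \cong b^{-2} \otimes \gamma$, hence $\calL_X^* = b \otimes \beta^* \cong b^{-1} \otimes \gamma$, and thus $\calL_X \otimes \calL_X^* \cong \epsilon$ gives $\calL_X^{-1} \cong b^{-1}\otimes \gamma$, i.e. $b \otimes \calL_X^{-1} \cong \gamma$ and $b^{-1}\otimes \calL_X \cong \beta \cdot b^{-2} \cdot b \cdot \ldots$ — at this point I would simply tensor $\gamma \oplus \beta \cong b^2 \oplus \epsilon$ by $\calL_X^{-1} = b^{-1}\otimes \gamma^{-1}\cdot$ (care with which is which) to land on $\epsilon \oplus (\beta \otimes b^{-2}\otimes \gamma) \cong \ldots$; the bookkeeping here is the routine part. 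The upshot I am aiming for is precisely $\calL_X^2 \oplus \epsilon \cong (b \otimes \calL_X) \oplus (b^{-1}\otimes \calL_X)$, which one can also read off by tensoring~\eqref{mainequationmod} (stated for general $\zzeta$ in the Bott periodicity subsection) by $b^{-2}$ and substituting $\calL_X = b^{-1}\otimes \beta$.

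The relation~\eqref{Lequation} then follows from~\eqref{Lequationmod} by passing to $K$-theory: the isomorphism of bundles gives the equality $\calL_X^2 + 1 = b\calL_X + b^{-1}\calL_X$ in $K^*_T(\PP(b_X^2 \oplus \epsilon))$, which rearranges to $\calL_X^2 - (b + b^{-1})\calL_X + 1 = 0$, i.e. $(\calL_X - b)(\calL_X - b^{-1}) = 0$ since $b \cdot b^{-1} = 1$ in $R(T)$. I do not anticipate a genuine obstacle here; the only point requiring a little care is the consistent identification of $\bar p^*$ of the pulled-back representation bundles $b^{\pm 2}$ with the corresponding elements of $R(T) \subseteq K^*_T(\PP(b_X^2 \oplus \epsilon))$, and tracking the three mutually dual line bundles $\gamma_{b_X^2 \oplus \epsilon}$, $\beta_{b_X^2 \oplus \epsilon}$, $\calL_X$ and their twists by powers of $b$ — essentially a finite computation with first Chern classes (equivalently, with the classification of equivariant line bundles) of the kind already carried out for general $\zzeta$ in Section~\ref{subsection:BottP}, specialized to $\zzeta = b_X^2$.
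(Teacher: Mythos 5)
Your proposal is correct and follows essentially the same route as the paper: derive the bundle isomorphism~\eqref{Lequationmod} by twisting~\eqref{mainequationmod} (specialized to $\zzeta = b^2_X$) by an appropriate line bundle, then read off the quadratic relation~\eqref{Lequation} in $K^*_T$ by rearranging, using $b\cdot b^{-1}=1$. One bookkeeping slip worth noting: tensoring~\eqref{mainequationmod} by $b^{-2}$ alone and substituting $\calL_X = b^{-1}\otimes\beta_{b^2_X\oplus\epsilon}$ lands you on a relation in $\calL_X^{-1}$, namely $\epsilon\oplus b^{-2}\cong (b^{-1}\otimes\calL_X^{-1})\oplus(b^{-1}\otimes\calL_X)$; the paper instead tensors by $b^{-2}\otimes\beta_{b^2_X\oplus\epsilon}$, which gives~\eqref{Lequationmod} directly (equivalently, your version needs a further twist by $b\otimes\calL_X$). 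Your derivation of~\eqref{Lequation} directly from the $K$-theory class of the bundle isomorphism~\eqref{Lequationmod} is marginally more self-contained than the paper's appeal to Theorem~\ref{Bott}, though both rest on the same underlying computation.
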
 

\begin{proof} 
Applying~\eqref{mainequationmod} to our situation yields
$b^2                              \oplus \epsilon \cong (b^2
 \otimes \beta_{b^2_X \oplus \epsilon}^{-1})
\oplus \beta_{b^2_X \oplus \epsilon}$. Tensoring with
$b^{-2} \otimes \beta_{b^2_X \oplus
  \epsilon}$ yields~\eqref{Lequationmod}.
Theorem~\ref{Bott} also states that 
$$(\beta_{b^2_X \oplus \epsilon} - b)(\beta_{b^2_X \oplus \epsilon} -1) =0$$
in $K_T\bigl(\PP(b_X^2 \oplus \epsilon)\bigr)$.
Dividing by~$b^2$ gives~\eqref{Lequation}.
\end{proof}

\begin{remark}
Dividing~\eqref{Lequation}
by $\calL_X^2$ shows that $\calL_X^{-1}$ satisfies the same relation.
\end{remark}

We next prove that the bundle $\calL_X$, considered there as a
$T$-bundle, is in fact naturally a $G$-bundle. In particular, the
$T$-equivariant $K$-theory class of $\calL_X$ is in the image of the
forgetful map $K^*_G(\PP(b^2_X \oplus \epsilon)) \to K^*_T(\PP(b^2_X
\oplus \epsilon))$. This observation will be useful when we compute
$K^*_G(\PP(b^2_X \oplus \epsilon))$ using the results for
$K^*_T(\PP(b^2_X \oplus \epsilon))$ (see Lemma~\ref{lemma:equivariant Bott} below).

\begin{lemma}\label{relatingLtogamma}
Let $\pi: \PP(b^2_X \oplus \epsilon) \to \PP(b^2 \oplus \epsilon) 
\cong \PP^1$ denote the natural $G$-equivariant projection and let
$\gamma$ denote the tautological line bundle over $\PP^1$. Then
$\calL_X \cong \pi^*(\gamma^{-1})$. In particular, since $\gamma$ (and
hence $\gamma^{-1}$) is a $G$-bundle, $\calL_X$ is also a $G$-bundle. 
\end{lemma}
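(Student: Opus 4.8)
The plan is to identify $\calL_X$ with $\pi^*(\gamma^{-1})$ by tracking the bundles $\beta_{b^2_X\oplus\epsilon}$ and $\gamma_{b^2_X\oplus\epsilon}$ through the projection $\pi$, using the naturality of the Atiyah construction (Lemma~\ref{lemma:beta}) and the fact that $G$-equivariant line bundles are classified by their $G$-equivariant first Chern class. First I would observe that the projection $\pi:\PP(b^2_X\oplus\epsilon)\to\PP(b^2\oplus\epsilon)$ is the map on projectivizations induced by the bundle map $b^2_X\oplus\epsilon\to b^2\oplus\epsilon$ covering the constant map $X\to\pt$; since taking $\PP(\xi\oplus\epsilon)$ and the associated tautological/complementary bundles is natural in $\xi$, we get $\pi^*(\gamma_{b^2\oplus\epsilon})\cong\gamma_{b^2_X\oplus\epsilon}$ and $\pi^*(\beta_{b^2\oplus\epsilon})\cong\beta_{b^2_X\oplus\epsilon}$ as $G$-equivariant bundles. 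Here I would also use the identification $\PP(b^2\oplus\epsilon)\cong\PP(b\oplus b^{-1})\cong\PP^1$ of~\eqref{eq:Pb2 plus epsilon is P1}, under which $\gamma_{b^2\oplus\epsilon}$ becomes (up to twisting by a character, which must be handled carefully) the tautological bundle $\gamma$ over $\PP^1$.

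Next I would compute $\beta_{b^2\oplus\epsilon}$ directly over $\PP^1$. Applying Lemma~\ref{lemma:beta} to $\zzeta=b^2$ over a point gives $b^2\oplus\epsilon\cong\gamma_{b^2\oplus\epsilon}\oplus\beta_{b^2\oplus\epsilon}$ as $G$-bundles over $\PP^1$, so by comparing $G$-equivariant first Chern classes (recalling $c_1^G$ is additive on direct sums and classifies $G$-line bundles) one gets $c_1^G(\beta_{b^2\oplus\epsilon})=c_1^G(b^2)-c_1^G(\gamma_{b^2\oplus\epsilon})$, where $c_1^G(b^2)$ is the pullback of the equivariant class $2b\in R(T)$ viewed in $H^*_G(\PP^1)$ and $c_1^G(\gamma_{b^2\oplus\epsilon})$ is the equivariant tautological class. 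The point is to arrange the bookkeeping so that $b^{-1}\otimes\beta_{b^2\oplus\epsilon}\cong\gamma^{-1}$ over $\PP^1$; this is exactly the $G$-equivariant refinement of the standard non-equivariant fact that the complementary line bundle to $\calO(-1)$ in $\calO(-1)\oplus\calO(1)$ over $\PP^1$ is $\calO(1)\cong\gamma^{-1}$, with the character twist $b^{-1}$ accounting for the difference between the splitting $b^2\oplus\epsilon$ and the symmetric splitting $b\oplus b^{-1}$ used to identify the base with $\PP^1$.

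Combining these: $\calL_X=b^{-1}\otimes\beta_{b^2_X\oplus\epsilon}\cong b^{-1}\otimes\pi^*(\beta_{b^2\oplus\epsilon})\cong\pi^*(b^{-1}\otimes\beta_{b^2\oplus\epsilon})\cong\pi^*(\gamma^{-1})$, where in the second-to-last step I use that $b^{-1}$, being pulled back from a point, commutes with $\pi^*$. Since $\gamma^{-1}$ is visibly a $G$-bundle over $\PP^1$ (the $G=SU(2)$-action on $\PP^1$ lifts to $\C^2$ and hence to $\gamma$ and its dual), so is $\calL_X$. The main obstacle I anticipate is the careful handling of the character twist and the identification $\PP(b^2\oplus\epsilon)\cong\PP^1$: one must check that the homeomorphism $[x:y]\mapsto[zx:z^{-1}y]$-type reparametrization used in~\eqref{eq:Pb2 plus epsilon is P1} intertwines the tautological bundle $\gamma_{b^2\oplus\epsilon}$ with $b^{-1}\otimes\gamma$ (or some such twist) rather than with $\gamma$ on the nose, since getting this character wrong would shift the answer by a power of $b$. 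An alternative route that sidesteps the compactness-and-Chern-class argument is to write down the isomorphism $\calL_X\cong\pi^*(\gamma^{-1})$ explicitly on total spaces using the concrete model $E(\beta_{b^2_X\oplus\epsilon})=\{(\ell,v):\ell\in\PP(b^2_X\oplus\epsilon),\,v\in\ell^\perp\subseteq b^2_X\oplus\epsilon\}$ and the analogous model for $\gamma$ over $\PP^1$; this makes $G$-equivariance manifest but requires more explicit coordinates.
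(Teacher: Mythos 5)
Your overall strategy — reduce to the case $X = \pt$ via the naturality of the Atiyah construction under $\pi$, then identify $\calL_{\pt}$ over $\PP^1$ by a Chern class argument — is the same one the paper uses, and the naturality step ($\pi^*\gamma_{b^2\oplus\epsilon}\cong\gamma_{b^2_X\oplus\epsilon}$, $\pi^*\beta_{b^2\oplus\epsilon}\cong\beta_{b^2_X\oplus\epsilon}$, hence $\calL_X\cong\pi^*\calL_{\pt}$) is correct and matches the final line of the paper's proof.

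However, there is a genuine gap in the middle step, and it is exactly the point you flag as ``the main obstacle.'' You write that Lemma~\ref{lemma:beta} applied to $\zzeta = b^2$ gives $b^2\oplus\epsilon\cong\gamma_{b^2\oplus\epsilon}\oplus\beta_{b^2\oplus\epsilon}$ \emph{as $G$-bundles}, and you then compare $G$-equivariant first Chern classes $c_1^G(\beta_{b^2\oplus\epsilon}) = c_1^G(b^2) - c_1^G(\gamma_{b^2\oplus\epsilon})$. But $b^2$ is a $T$-representation, not a $G$-representation, so $b^2\oplus\epsilon$ is only a $T$-bundle over a point, $\PP(b^2\oplus\epsilon)$ is a priori only a $T$-space, and $\gamma_{b^2\oplus\epsilon}$, $\beta_{b^2\oplus\epsilon}$ are only $T$-bundles. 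The expressions $c_1^G(b^2)$ and $c_1^G(\gamma_{b^2\oplus\epsilon})$ are not defined, and the whole point of the lemma is that it is $\calL_X = b^{-1}\otimes\beta_{b^2_X\oplus\epsilon}$ — \emph{after} the twist by $b^{-1}$ — that turns out to carry a $G$-structure, even though $\beta_{b^2_X\oplus\epsilon}$ by itself does not. Running the Chern class computation in $c_1^G$ from the outset assumes the conclusion you are trying to reach.

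The paper closes this gap with a structural observation rather than a bare Chern class calculation: tensoring the $T$-equivariant Atiyah splitting $b^2\oplus\epsilon\cong\gamma_{b^2\oplus\epsilon}\oplus\beta_{b^2\oplus\epsilon}$ by $b^{-1}$ gives
\begin{equation*}
b\oplus b^{-1}\;\cong\;\bigl(b^{-1}\otimes\gamma_{b^2\oplus\epsilon}\bigr)\oplus\calL_{\pt},
\end{equation*}
and since $b\oplus b^{-1} = v|_T$ and $\PP(b^2\oplus\epsilon)\cong\PP(v)$ as $T$-spaces, this is precisely the restriction to $T$ of the $G$-equivariant Atiyah splitting $v_{\PP(v)}\cong\gamma_v\oplus\beta_v$. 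This identifies $\calL_{\pt}\cong\beta_v$, which \emph{is} manifestly a $G$-bundle, and only \emph{then} is a Chern class computation (now legitimately in $c_1^G$) used to conclude $\beta_v\cong\gamma^{-1}$. If you want to keep your formulation, you should carry out the comparison with $c_1^T$ throughout, show $b^{-1}\otimes\gamma_{b^2\oplus\epsilon}\cong\gamma$ as $T$-bundles (hence $\calL_{\pt}\cong\gamma^{-1}$ as $T$-bundles), and then separately argue why this $T$-isomorphism promotes to a $G$-isomorphism — for instance via the uniqueness of the orthogonal complement in a $G$-invariant metric, which is essentially the paper's identification with $\beta_v$.
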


\begin{proof}
From the defining equation~\eqref{basemainequation} of $\beta_{b^2_X
    \oplus \epsilon}$ together with the definition~\eqref{def:calL} of
  $\calL_X$, it can be deduced that 
  \begin{equation}
    \label{eq:b and gamma and calLX}
    b \oplus b^{-1}
       \cong \left(b^{-1} \otimes
    \gamma_{b^2_X \oplus \epsilon}\right) \oplus \calL_X.
  \end{equation}
Consider the special case $X=\pt$ where $\PP(b^2_X \oplus \epsilon) =
\PP(b^2 \oplus \epsilon) \cong \PP(b \oplus b^{-1}) \cong \PP^1$ as in~\eqref{eq:Pb2 plus epsilon
  is P1}. In addition, since the $G$-representation $v$ restricts to
the $T$-representation $b \oplus b^{-1}$, we may identify $\PP(b^2
\oplus \epsilon)$ with $\PP(v)$. In particular, it is a $G$-space.
Moreover, in this setting the $T$-bundle isomorphism~\eqref{eq:b and
  gamma and calLX} is the restriction to $T$ of the $G$-bundle
isomorphism 
\[
v_{\PP(v)} \cong \gamma_{v} \oplus \beta_{v}
\]
where the notation is as in Lemma~\ref{lemma:atiyah}
and $\gamma_{v} \cong \gamma$ (under the identification
$\PP(v) \cong \PP^1$). From this it follows that the bundle
$\calL_{\pt}$ is equivariantly isomorphic to the $G$-bundle
$\beta_{v}$. A Chern class computation shows that
$\beta_{v} \cong \gamma^{-1}$, so $\calL_{\pt} \cong
\gamma^{-1}$. This proves the result for the case $X=\pt$. On the
other hand, the bundle $\calL_X$ for a general space $X$ is $\calL_X =
\pi^*(\calL_{\pt}) \cong \pi^*(\gamma^{-1})$, as desired. 
\end{proof}

Motivated by the above lemma, we formulate below a version of
Theorem~\ref{Bott} for our situation, using the variable $\calL_X$
instead of $\beta_{b^2_X \oplus \epsilon}$. 

\begin{lemma}\label{lemma:equivariant BottT}
Let $X$ be a $G=SU(2)$-space (hence also a $T$-space). Let $T$ act on
$\PP^1$ in the standard way. Then 
\begin{equation}\label{BottT}
K^*_T\bigl(X\times\PP^1\bigr)
\cong \frac{K^*_T(X)[\calL_X]}{(\calL_X-b)(\calL_X-b^{-1})} \cong
\frac{K^*_T(X)[\calL_X]}{(\calL^2_X - (b+b^{-1})\calL_X +1)}.
\end{equation}
\end{lemma}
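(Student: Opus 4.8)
The plan is to obtain this as a direct consequence of equivariant Bott periodicity (Theorem~\ref{Bott}), applied to a judicious choice of line bundle over $X$, followed by an invertible change of variable. First I would record the $T$-equivariant identification $X\times\PP^1\cong_T\PP(b_X^2\oplus\epsilon)$: by~\eqref{eq:Pb2 plus epsilon is P1} we have $\PP(b^2\oplus\epsilon)\cong\PP^1$ as $T$-spaces, and pulling this back along the constant map $X\to\pt$ (equivalently, taking the Cartesian product with $X$) gives $\PP(b_X^2\oplus\epsilon)\cong_T X\times\PP^1$ with the standard $T$-action on the $\PP^1$ factor. This is exactly the situation in which the discussion preceding the lemma was carried out, so in particular the bundle $\calL_X$ of~\eqref{def:calL} is defined over this space.

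Next I would invoke Theorem~\ref{Bott} with $H=T$ and $\zzeta=b_X^2=b_X\otimes b_X$, the line bundle over $X$ pulled back from the representation $b^2\in R(T)$. Since $\bar{p}\colon\PP(b_X^2\oplus\epsilon)\to X$ is a pullback of the constant map, $\bar{p}^*(b_X^2)$ is simply the class $b^2\in R(T)\subseteq K^*_T(\PP(b_X^2\oplus\epsilon))$ rather than a genuinely twisted class. Hence Theorem~\ref{Bott} gives
\[
K^*_T(X\times\PP^1)\cong\frac{K^*_T(X)[\beta_{b_X^2\oplus\epsilon}]}{(\beta_{b_X^2\oplus\epsilon}-b^2)(\beta_{b_X^2\oplus\epsilon}-1)}
\]
as $R(H)$-algebras, where I use the second of the two presentations in that theorem.

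The final step is the change of variable $\beta_{b_X^2\oplus\epsilon}=b\cdot\calL_X$ dictated by~\eqref{def:calL}. Because $b\in R(T)$ is a unit, the $R(T)$-algebra homomorphism $K^*_T(X)[\calL_X]\to K^*_T(X)[\beta_{b_X^2\oplus\epsilon}]$ sending $\calL_X\mapsto b^{-1}\beta_{b_X^2\oplus\epsilon}$ is an isomorphism of polynomial rings; this is the point that needs to be checked with (minor) care, since it is what allows the ``only one relation'' conclusion of Bott periodicity to transport verbatim to the $\calL_X$-presentation. Under this substitution the relator factors as
\[
(b\calL_X-b^2)(b\calL_X-1)=b(\calL_X-b)\cdot b(\calL_X-b^{-1})=b^2(\calL_X-b)(\calL_X-b^{-1}),
\]
and since $b^2$ is a unit, the ideal it generates equals the ideal generated by $(\calL_X-b)(\calL_X-b^{-1})$. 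Expanding $(\calL_X-b)(\calL_X-b^{-1})=\calL_X^2-(b+b^{-1})\calL_X+1$ gives the second form in~\eqref{BottT} and recovers the previously derived relation~\eqref{Lequation}, completing the proof. I do not expect any serious obstacle here: the content is entirely in the two identifications above (the $T$-homeomorphism $X\times\PP^1\cong_T\PP(b_X^2\oplus\epsilon)$ and the fact that $\bar{p}^*(\zzeta)=b^2$), together with the bookkeeping that multiplication by the unit $b$ is an isomorphism of the relevant polynomial rings.
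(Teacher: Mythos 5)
Your proof is correct and follows exactly the route the paper intends: the paper states this lemma immediately after the sentence ``we formulate below a version of Theorem~\ref{Bott} for our situation, using the variable $\calL_X$ instead of $\beta_{b^2_X\oplus\epsilon}$'' and supplies no further proof, so the content is precisely the change of variable $\beta_{b_X^2\oplus\epsilon}=b\,\calL_X$ by the unit $b\in R(T)$ that you spell out, together with the identification $X\times\PP^1\cong_T\PP(b_X^2\oplus\epsilon)$ and the observation that $\bar p^*(b_X^2)=b^2$. Your version is if anything slightly more careful than the text (which, in the proof of the preceding unnumbered lemma, writes $(\beta_{b^2_X\oplus\epsilon}-b)(\beta_{b^2_X\oplus\epsilon}-1)=0$ where it should be $(\beta_{b^2_X\oplus\epsilon}-b^2)(\beta_{b^2_X\oplus\epsilon}-1)=0$, as your factorization shows).
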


Lemma~\ref{lemma:equivariant BottT} computes $K^*_T(X \times \PP^1)$
in terms of $K^*_T(X)$, but we also need an analogous computation for
$K^*_G(X \times \PP^1)$. Recall that the forgetful
functor $F: K^*_G(Y) \to K^*_T(Y)$ for any $G$-space $Y$ has image
contained in the Weyl-invariants $K^*_T(Y)^W$, but $F$ need not be
surjective onto $K^*_T(Y)^W$ in general (i.e., it need not be the case
that $K^*_G(Y) \cong K^*_T(Y)^W$). See \cite[Example
4.8]{HarLanSja09}. Nevertheless, we have the following. 

\begin{lemma}\label{lemma:equivariant Bott}
  Let $X$ be a $G=SU(2)$-space. Let $G$ act on $\PP^1$ in the standard
  way. Assume that $K^*_G(X) \cong K^*_T(X)^W$. Then $K^*_G(X \times
  \PP^1) \cong K^*_T(X \times \PP^1)$, and 
\[
K^*_G(X \times \PP^1) \cong \frac{K^*_G(X)[\calL_X]}{(\calL^2_X - v
  \calL_X + 1)}.
\]
\end{lemma}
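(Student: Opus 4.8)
The plan is to bootstrap from the $T$-equivariant statement in Lemma~\ref{lemma:equivariant BottT}, using the hypothesis $K^*_G(X) \cong K^*_T(X)^W$ and the key observation (Lemma~\ref{relatingLtogamma}) that $\calL_X$ is already a $G$-bundle. First I would show that the $W$-action on $K^*_T(X \times \PP^1)$, expressed via the presentation $K^*_T(X)[\calL_X]/(\calL_X^2 - (b+b^{-1})\calL_X + 1)$, fixes the generator $\calL_X$: this is exactly because $\calL_X \cong \pi^*(\gamma^{-1})$ descends from a $G$-bundle, so its class lies in the image of the forgetful map and is therefore $W$-invariant. Meanwhile $b + b^{-1} = v$ is $W$-invariant in $R(T)$. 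Hence the nontrivial Weyl element $w$ acts on $K^*_T(X \times \PP^1) = \bigoplus_{i} K^*_T(X)\cdot \calL_X^i$ (a free $K^*_T(X)$-module of rank $2$, by the quadratic relation) simply by acting on the coefficients $K^*_T(X)$. Taking $W$-invariants then gives
\[
K^*_T(X \times \PP^1)^W \cong \frac{K^*_T(X)^W[\calL_X]}{(\calL_X^2 - v\calL_X + 1)} \cong \frac{K^*_G(X)[\calL_X]}{(\calL_X^2 - v\calL_X + 1)},
\]
where the last isomorphism uses the hypothesis.

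Next I would show $K^*_G(X \times \PP^1) \cong K^*_T(X \times \PP^1)^W$, i.e.\ that the forgetful map is surjective onto Weyl invariants in this case (which, as the paper notes, is not automatic). The cleanest route is to build a candidate $G$-equivariant description directly: the bundle $\calL_X = \pi^*(\gamma^{-1})$ is a genuine $G$-bundle over $X \times \PP^1 \cong_G \PP(b_X^2 \oplus \epsilon)$, so it defines a class in $K^*_G(X \times \PP^1)$ satisfying the relation $\calL_X^2 - v\calL_X + 1 = 0$ (this is the $G$-equivariant refinement of~\eqref{Lequation}, obtained by running the argument of the preceding lemma $G$-equivariantly, which is legitimate since every bundle in sight — $\gamma_{v}$, $\beta_{v}$, and $v$ itself — is a $G$-bundle). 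This yields a well-defined $R(G)$-algebra map
\[
\frac{K^*_G(X)[\calL_X]}{(\calL_X^2 - v\calL_X + 1)} \longrightarrow K^*_G(X \times \PP^1).
\]
I would then show this map is an isomorphism by comparing with the $T$-equivariant statement: forgetting to $T$ takes it to the isomorphism of Lemma~\ref{lemma:equivariant BottT} precomposed with $K^*_G(X) \hookrightarrow K^*_T(X)$. Since the source is a free $K^*_G(X)$-module of rank $2$ and its image under the (injective, as $X$ and $BT$ have cells only in even degrees) forgetful map $K^*_G(X\times\PP^1)\to K^*_T(X\times\PP^1)$ lands isomorphically onto $K^*_T(X\times\PP^1)^W$ (by the previous paragraph), the map is injective; surjectivity then follows because the forgetful map $K^*_G(X \times \PP^1) \to K^*_T(X \times \PP^1)^W$ has image containing a full rank-$2$ free summand and cannot be larger, forcing both the forgetful map to be an isomorphism onto $W$-invariants and our algebra map to be surjective.

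The main obstacle is precisely this surjectivity of the forgetful map onto Weyl invariants — the paper explicitly flags (citing \cite[Example 4.8]{HarLanSja09}) that $K^*_G(Y) \to K^*_T(Y)^W$ is not surjective in general, so one cannot simply invoke it. The resolution above sidesteps the general question by exhibiting enough honest $G$-classes (powers of the $G$-bundle $\calL_X$, with coefficients pulled back from $K^*_G(X)$) to account for the entire $W$-invariant subring, using the rank-$2$ freeness forced by equivariant Bott periodicity together with the even-cell hypothesis (which guarantees the relevant spectral sequences collapse and the forgetful maps are injective). A secondary point requiring care is verifying the $G$-equivariant quadratic relation $\calL_X^2 - v\calL_X + 1 = 0$: I would obtain it by running Lemma~\ref{relatingLtogamma}'s argument at the bundle level $G$-equivariantly, namely from $v_{\PP(v)} \cong \gamma_v \oplus \beta_v$ and $\beta_v \cong \gamma^{-1}$, pulled back along $\pi$, which gives $\calL_X \oplus (v \otimes \calL_X^{-1}\text{-type term})$ decompositions yielding the relation after the same manipulation used to derive~\eqref{Lequation}.
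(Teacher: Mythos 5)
Your proposal is correct and follows essentially the same route as the paper: take Weyl invariants of Lemma~\ref{lemma:equivariant BottT}, and resolve the surjectivity question for the forgetful map by invoking Lemma~\ref{relatingLtogamma} to see that $\calL_X$ is a genuine $G$-bundle (so, together with $K^*_G(X)$, it generates all of $K^*_T(X\times\PP^1)^W$ from inside the image of restriction). Your extra step of building the explicit $R(G)$-algebra map from $K^*_G(X)[\calL_X]/(\calL_X^2-v\calL_X+1)$ and verifying the $G$-equivariant quadratic relation is a slightly more elaborate packaging of the same argument, and you are somewhat more careful than the paper in making the injectivity of the forgetful map explicit.
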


\begin{proof}
  The equivariant Thom isomorphism (Lemma~\ref{thomisom}) implies that
  $K^*_G(X)$ injects into $K^*_G(X \times \PP^1)$. We showed in
  Lemma~\ref{relatingLtogamma} that $\calL_X$ lies in $K^*_G(X \times
  \PP^1 \cong \PP(b^2_X \oplus \epsilon))$. 
If we make the assumption that 
$K^*_G(X) \cong K^*_T(X)^W$, 
taking Weyl invariants
  in~\eqref{BottT} gives
\[
K^*_T(X \times \PP^1)^W \cong \frac{K^*_G(X)[\calL_X]}{(\calL_X^2 -
  (b+b^{-1})\calL_X + 1)}
\]
since $\calL_X$ and $(b+b^{-1}) \cong v$ are $W$-invariant.
 Hence $K^*_T(X \times
\PP^1)^W$ is generated by $K^*_G(X)$ and $\calL_X$, i.e., the
restriction map $$K^*_G(X \times \PP^1) \to K^*_T(X \times \PP^1)$$ 
is surjective and $K^*_G(X \times \PP^1) \cong K^*_T(X \times \PP^1)^W$.
The result follows. 
\end{proof}

Next we record the analogous computations in cohomology. Since the
ideas are similar we keep exposition brief. 
Let $\bar{b}=c_1^T(b)\in H_T^2(\pt)=H^2(BT;\Z)$ denote the equivariant Chern
class of~$b \in R(T)$.
Then $H_T^*(\pt;\Z)=\ZZ[\bar{b}]$ and $c_1^T(b^{-1})=-\bar{b}$.
The nontrivial element $w$ of the Weyl group $W \cong S_2 \cong \Z/2\Z$ of $G$ acts
by $w(\bar{b})=-\bar{b}$
and $$H_G^*(\pt;\Z)=H^*(BG;\Z)=\bigl(H^*(BT;\Z)\bigr)^W=\Z[\bar{t}],$$
where 
\begin{equation}\label{def bart}
\bar{t}:=\bar{b}^2. 
\end{equation}
As in the case of $K$-theory, there is a canonical ring homomorphism
$$H^*_T(\pt;\Z) = H^*(BT;\Z) \to H^*_T(X;\Z)$$ (respectively $H^*_G(\pt;\Z) \to
H^*_G(X;\Z)$) for any $T$-space (respectively $G$-space)~$X$, and by
slight abuse of notation we will use the same symbol to denote an
element in $H^*_T(\pt;\Z)$ (respectively $H^*_G(\pt;\Z)$) to denote its
image in $H^*_T(X;\Z)$ (respectively $H^*_G(X;\Z)$). 

Let $\bar{\calL}_X$ denote the $T$-equivariant first 
Chern class of $\calL_X$, i.e.,
$\bar{\calL}_X := c_1^T(b^{-1}_{\PP(b^2_X \oplus \epsilon)} \otimes
\beta_{b^2_X \oplus \epsilon})$ in
$H_T^2\bigl(\PP(b_X^2\oplus\epsilon);\Q\bigr)$.

\begin{lemma}\label{lemma:barcalL squared} 
Let $\bar{\calL}_X$ and $\bar{t}$ be as above. Then $\bar{\calL}_X^2 = \bar{t}$. 
\end{lemma}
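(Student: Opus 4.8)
The plan is to derive $\bar{\calL}_X^2=\bar t$ by applying equivariant Chern classes to the bundle isomorphism~\eqref{Lequationmod}, in direct parallel with the way the $K$-theoretic relation~\eqref{Lequation} was obtained from~\eqref{Lequationmod} via exterior powers. Recall that~\eqref{Lequationmod} asserts
\[
\calL_X^2\oplus\epsilon\cong(b\otimes\calL_X)\oplus(b^{-1}\otimes\calL_X)
\]
as $T$-equivariant vector bundles over $\PP(b_X^2\oplus\epsilon)$, where $\calL_X^2$ denotes the tensor square $\calL_X\otimes\calL_X$, a line bundle whose $T$-equivariant first Chern class is $2\bar{\calL}_X$.

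The first step is to apply the $T$-equivariant total Chern class $c^T$ to both sides, using the Whitney sum formula together with the identity $c_1^T(L\otimes M)=c_1^T(L)+c_1^T(M)$ for equivariant line bundles and the facts $c_1^T(b)=\bar b$, $c_1^T(b^{-1})=-\bar b$. The left-hand side gives $c^T(\calL_X^2\oplus\epsilon)=1+2\bar{\calL}_X$, while the right-hand side gives
\[
\bigl(1+\bar b+\bar{\calL}_X\bigr)\bigl(1-\bar b+\bar{\calL}_X\bigr)=(1+\bar{\calL}_X)^2-\bar b^2=1+2\bar{\calL}_X+\bar{\calL}_X^2-\bar b^2 .
\]
Comparing the two expressions (say, in degree $4$) yields $\bar{\calL}_X^2=\bar b^2$, which equals $\bar t$ by the definition~\eqref{def bart}. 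Alternatively, one may argue through Lemma~\ref{relatingLtogamma}: since $\calL_X\cong\pi^*(\gamma^{-1})$ one has $\bar{\calL}_X=-\pi^*c_1^T(\gamma)$, and the projective bundle relation for $\PP(v)$ with $v\cong b\oplus b^{-1}$ gives $c_1^T(\gamma)^2=\bar b^2=\bar t$ in $H^*_T(\PP^1;\Q)$; pulling back along $\pi$ and noting that $\pi^*$ fixes $\bar t$ (a class pulled back from a point) gives the same conclusion.

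I do not anticipate a genuine obstacle here: once~\eqref{Lequationmod} (equivalently Lemma~\ref{relatingLtogamma}) is available, the statement is a purely formal Chern-class bookkeeping computation. The only point requiring care is to read $\calL_X^2$ as the tensor square of the line bundle $\calL_X$, so that its first Chern class is $2\bar{\calL}_X$ rather than $\bar{\calL}_X^2$; the rest is immediate.
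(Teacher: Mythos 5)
Your proposal is correct and uses essentially the same argument as the paper: both apply the $T$-equivariant Chern class to the isomorphism~\eqref{Lequationmod}, observe that the left-hand side has vanishing second Chern class (being a line bundle plus a trivial summand), and expand the right-hand side to read off $\bar{\calL}_X^2 - \bar{b}^2 = 0$. The paper goes directly to $c_2^T$ rather than the full total Chern class, but the computation is identical; your alternative route via Lemma~\ref{relatingLtogamma} is also valid but not what the paper does.
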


\begin{proof} 
The isomorphism~\eqref{Lequationmod} implies that $\calL_X^2 \oplus
\epsilon$ is a sum of a line bundle
with a trivial bundle. Hence its $2$nd equivariant Chern class is $0$
and 
we have 
\begin{equation}\label{Lsquared}
\begin{split}
0 & =c_2^T\bigl((b_{\PP(b^2_X \oplus \epsilon)} \oplus
b^{-1}_{\PP(b^2_X \oplus \epsilon)}) \otimes \calL_X\bigr) \\
 & = c_1^T(b_{\PP(b^2_X \oplus \epsilon)} \otimes
 \calL_X)c_1^T(b^{-1}_{\PP(b^2_X \oplus \epsilon)} \otimes \calL_X) \\
 & = (\bar{\calL}_X+\bar{b})(\bar{\calL}_X-\bar{b}) \\
 & =\bar{\calL}_X^2-\bar{b}^2 \\
 & =\bar{\calL}_X^2-\bar{t}
\end{split}
\end{equation}
as desired. 
\end{proof} 

\begin{remark} 
Lemma~\ref{lemma:barcalL squared}
is the cohomology analogue of~\eqref{Lequation}.
\end{remark} 

The equivariant Thom isomorphism in cohomology implies 
\begin{equation}\label{HTPequation}
\tilde{H}_T^*\bigl(\PP(b_X^2\oplus\epsilon);\Q\bigr)\cong
\bar{p}^*\bigl(H_T^*(X;\Q)\bigr)\oplus
\bar{U}\cdot\bar{p}^*\bigl(H_T^*(X;\Q)\bigr)
\end{equation}
as $H_T^*(\pt;\Q)$-modules,
where $\bar{U}:=\bar{U}_{b^2_X}:=c_1^T(U_{b^2_X})\in H_T^2\bigl(\PP(\tilde{b}^2\oplus\epsilon;\Q)\bigr)$
is the equivariant Thom class in cohomology. Recalling from
Lemma~\ref{lemma:compute Uzzeta}
that $U_{b^2_X} = 1 - \beta_{b^2_X \oplus \epsilon}$, we obtain 
$\bar{U}=-\bar{\calL}_X-\bar{b}$. Thus~\eqref{HTPequation} can be rewritten as
$$\tilde{H}_T^*\bigl(\PP({b}_X^2\oplus\epsilon);\Q\bigr)\cong
\bar{p}^*\bigl(\tilde{H}_T^*(X;\Q)\bigr)\oplus
\bar{\calL}_X\cdot\bar{p}^*\bigl(H_T^*(X;\Q)\bigr)$$
as $H^*_T(\pt;\Q)$-modules.
Together with $\bar{L}^2=\bar{t}$, this determines the $H_T^*(\pt;\Q)$-algebra
structure as follows. 

\begin{lemma}\label{lemma:equivariant Bott cohomology T}
With notation as above, 
$$H_T^*\bigl(\PP(\tilde{b}^2\oplus\epsilon);\Q\bigr)\cong
H_T^*(X;\Q)[\bar{\calL}_X]/\bigl(\bar{\calL}_X^2-\bar{t}\bigr).$$
\end{lemma}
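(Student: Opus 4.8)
The plan is to argue exactly as in the $K$-theory case (Lemma~\ref{lemma:equivariant BottT} and Lemma~\ref{lemma:equivariant Bott}), but now in rational equivariant cohomology, so the argument is considerably easier because the Thom isomorphism already hands us the module structure and there is no subtlety about surjectivity onto Weyl invariants. First I would record the free-module decomposition
\[
\tilde{H}_T^*\bigl(\PP(b_X^2\oplus\epsilon);\Q\bigr)\cong
\bar{p}^*\bigl(\tilde{H}_T^*(X;\Q)\bigr)\oplus
\bar{\calL}_X\cdot\bar{p}^*\bigl(H_T^*(X;\Q)\bigr),
\]
which is precisely the reformulation of~\eqref{HTPequation} obtained above using $\bar{U}=-\bar{\calL}_X-\bar{b}$. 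Adding back the contribution of $H^0$ this says that, as an $H_T^*(X;\Q)$-module (equivalently as an $H_T^*(\pt;\Q)$-module compatibly with the $\bar{p}^*$ action of $H_T^*(X;\Q)$), $H_T^*\bigl(\PP(b_X^2\oplus\epsilon);\Q\bigr)$ is free of rank $2$ with basis $\{1,\bar{\calL}_X\}$.

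Next I would observe that, as a ring, $H_T^*\bigl(\PP(b_X^2\oplus\epsilon);\Q\bigr)$ is generated over $\bar{p}^*\bigl(H_T^*(X;\Q)\bigr)$ by the single class $\bar{\calL}_X$: this is immediate from the rank-$2$ free basis $\{1,\bar{\calL}_X\}$, since any element is of the form $\bar{p}^*(a)+\bar{p}^*(a')\bar{\calL}_X$. Hence there is a surjective $H_T^*(X;\Q)$-algebra homomorphism
\[
H_T^*(X;\Q)[\bar{\calL}_X]\longrightarrow H_T^*\bigl(\PP(b_X^2\oplus\epsilon);\Q\bigr).
\]
By Lemma~\ref{lemma:barcalL squared} the relation $\bar{\calL}_X^2-\bar{t}=0$ holds in the target, where I use that $\bar{t}$ denotes the image of $\bar{t}=\bar{b}^2\in H_T^*(\pt;\Q)$ in $H_T^*(X;\Q)$, so this homomorphism factors through $H_T^*(X;\Q)[\bar{\calL}_X]/(\bar{\calL}_X^2-\bar{t})$. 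The latter quotient is a free $H_T^*(X;\Q)$-module of rank $2$ with basis $\{1,\bar{\calL}_X\}$, and the induced surjection sends this basis to the basis $\{1,\bar{\calL}_X\}$ of the target; a surjection of free modules of the same finite rank sending a basis to a basis is an isomorphism, which yields the claimed presentation.

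The only point requiring a little care — and the step I expect to be the mildest obstacle — is bookkeeping about which ground ring the isomorphism is stated over. The Thom-isomorphism decomposition~\eqref{HTPequation} as literally quoted is of $H_T^*(\pt;\Q)$-modules, but $\bar{p}^*$ makes all the terms $H_T^*(X;\Q)$-modules and the summand maps are $\bar{p}^*\bigl(H_T^*(X;\Q)\bigr)$-linear, so promoting to an $H_T^*(X;\Q)$-module (and then $H_T^*(X;\Q)$-algebra) statement is routine; similarly one must note that since $\PP(b_X^2\oplus\epsilon)\cong_T X\times\PP^1$ has cells only in even degrees relative to $X$ the Thom sequence splits and there are no extension problems, exactly as invoked around~\eqref{exseq}. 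With these identifications in place the argument above goes through verbatim.
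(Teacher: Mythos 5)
Your argument is correct and follows essentially the same route the paper takes: the Thom isomorphism gives the rank-2 free module decomposition with basis $\{1,\bar{\calL}_X\}$ (after using $\bar{U}=-\bar{\calL}_X-\bar{b}$ to trade $\bar{U}$ for $\bar{\calL}_X$), and Lemma~\ref{lemma:barcalL squared} supplies the single relation $\bar{\calL}_X^2=\bar{t}$, pinning down the algebra structure. The only difference is that you spell out the final "compare free modules via the evident surjection" step, which the paper leaves implicit.
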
 

Taking Weyl invariants and using the fact that $H^*_G(X;\Q) \cong
H^*_T(X;\Q)^W$ for any $G$-space~$X$, we also conclude the
following.\footnote{We warn the reader that for cohomology with $\Z$ coefficients it is not
  always the case that $H^*_G(X;\Z) \cong H^*_T(X;\Z)^W$. See
  \cite{HolSja:2008}.}

\begin{lemma}\label{lemma:equivariant Bott cohomology} 
Let $X$ be a $G$-space and let $G$ act on $\PP^1$ in the standard way.
Then 
$$H_G^*\bigl(X\times\PP^1 \cong \PP(b^2_X \oplus \epsilon);\Q\bigr)\cong\Bigl(H_T^*\bigl(X\times\PP^1\bigr);\Q\Bigr)^W
\cong H_G^*(X;\Q)[\bar{\calL}_X]/\bigl(\bar{\calL}_X^2-\bar{t}\bigr)$$
as $H_G^*(\pt;\Q)$-algebras.
\end{lemma}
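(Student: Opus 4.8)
The plan is to deduce the statement directly from Lemma~\ref{lemma:equivariant Bott cohomology T} by passing to Weyl invariants, exactly as in the $K$-theoretic case (Lemma~\ref{lemma:equivariant Bott}). The first step is to recall that for any $G$-space $Y$ one has $H_G^*(Y;\Q) \cong H_T^*(Y;\Q)^W$; this is standard for rational coefficients (it fails for $\Z$, which is why the footnote is there). Applying this with $Y = X \times \PP^1 \cong \PP(b^2_X \oplus \epsilon)$ identifies the left-hand side with $\bigl(H_T^*(X \times \PP^1;\Q)\bigr)^W$, giving the first isomorphism in the displayed formula.

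Next I would compute the right-hand side by taking $W$-invariants in the presentation from Lemma~\ref{lemma:equivariant Bott cohomology T}, namely $H_T^*(X \times \PP^1;\Q) \cong H_T^*(X;\Q)[\bar{\calL}_X]/(\bar{\calL}_X^2 - \bar{t})$. The key observations are: (i) $\bar{\calL}_X$ is $W$-invariant, since by Lemma~\ref{relatingLtogamma} the bundle $\calL_X \cong \pi^*(\gamma^{-1})$ is the pullback of a $G$-bundle, so its first Chern class is the image of an element of $H_G^2$; (ii) $\bar{t} = \bar{b}^2$ is $W$-invariant by definition~\eqref{def bart}, and lies in the image of $H_G^*(\pt;\Q) \to H_G^*(X;\Q)$; and (iii) using the rational splitting $H_T^*(X;\Q) \cong H_T^*(X)^W \oplus (\text{the }(-1)\text{-eigenspace})$ as $H_T^*(X)^W$-modules, together with the module decomposition $H_T^*(X \times \PP^1;\Q) \cong \bar{p}^*(H_T^*(X;\Q)) \oplus \bar{\calL}_X \cdot \bar{p}^*(H_T^*(X;\Q))$ displayed just before Lemma~\ref{lemma:equivariant Bott cohomology T}, the $W$-invariant part is precisely $\bar{p}^*(H_T^*(X;\Q)^W) \oplus \bar{\calL}_X \cdot \bar{p}^*(H_T^*(X;\Q)^W)$, which is $H_G^*(X;\Q) \oplus \bar{\calL}_X \cdot H_G^*(X;\Q)$. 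Since the only relation is $\bar{\calL}_X^2 = \bar{t}$, this gives $\bigl(H_T^*(X \times \PP^1;\Q)\bigr)^W \cong H_G^*(X;\Q)[\bar{\calL}_X]/(\bar{\calL}_X^2 - \bar{t})$ as $H_G^*(\pt;\Q)$-algebras, completing the proof.

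The only mildly delicate point—the analogue of the main obstacle in Lemma~\ref{lemma:equivariant Bott}—is verifying that taking $W$-invariants commutes with the free module decomposition, i.e.\ that the $W$-action respects the grading of $H_T^*(X \times \PP^1;\Q)$ as a free rank-two $\bar{p}^*(H_T^*(X;\Q))$-module with basis $\{1, \bar{\calL}_X\}$. This is immediate here because both basis elements are $W$-fixed, so $W$ acts only through its action on the coefficient ring $H_T^*(X;\Q)$, and invariants of a free module over a ring with a group action (taken with respect to a basis of fixed elements, over $\Q$) are the free module over the invariant subring. Everything else is a routine transcription of the $K$-theory argument into cohomology, which is why the exposition can remain brief.
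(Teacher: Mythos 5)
Your proof is correct and takes essentially the same route as the paper, which simply says ``Taking Weyl invariants and using the fact that $H^*_G(X;\Q) \cong H^*_T(X;\Q)^W$ for any $G$-space $X$'' and leaves the details implicit. You have merely spelled out the verification that the free rank-two module decomposition with $W$-fixed basis $\{1, \bar{\calL}_X\}$ passes to invariants, which the paper regards as routine.
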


With the preceding lemmas in place, it is now straightforward to
compute $K^*_T((\PP^1)^n)$, $K^*_G((\PP^1)^n)$, $H^*_T((\PP^1)^n;\Q)$, and
$H^*_G((\PP^1)^n;\Q)$ by a simple inductive argument starting with $X =
\pt$. 
Let $\pi_j: (\PP^1)^n \to \PP^1$ denote the projection to the $j$-th coordinate.
Motivated by Lemma~\ref{relatingLtogamma}, 
we define the following collection of line bundles over $(\PP^1)^n$. 

\begin{definition}\label{definition:L_j and barL_j}
For any $j$ with $1 \leq j \leq n$, define 
\[
L_j := \begin{cases} \pi_j^*(\gamma^{-1}) \textup{  if $j$ is odd}; \\
\pi_j^*(\gamma) \textup{  if $j$ is even.} \\
\end{cases} 
\]
We denote by $\bar{L}_j$ the first Chern class $c_1(L_j)$ of $L_j$ in 
$H_T^2\bigl((\PP^1)^n;\Q \bigr)$.
\end{definition}

We remark that all the bundles $L_j$ in the above definition are
equipped with a natural $G$-action (and hence also a $T$-action). In
particular they may be viewed as elements of
$K^*_T\bigl((\PP^1)^n\bigr)$ or $K^*_G\bigl((\PP^1)^n\bigr)$.

Using Lemmas~\ref{lemma:equivariant BottT}, ~\ref{lemma:equivariant
  Bott}, ~\ref{lemma:equivariant Bott cohomology T}
and~\ref{lemma:equivariant Bott cohomology} and 
and a simple induction argument beginning with $X=\pt$ we
obtain the following. 

\begin{theorem}\label{theorem:P1summary}
Let $G=SU(2)$ act on $\PP^1$ in the standard way.
Let $L_j$ be the bundles defined above, viewed as
  elements of $K^*_T \bigl((\PP^1)^n\bigr)$ or $K^*_G
  \bigl((\PP^1)^n\bigr)$ as appropriate, and let $\bar{L}_j$ denote
  the first Chern classes of $L_j$, viewed as elements of $H^2_T((\PP^1)^n;\Q)$
  or $H^2_G((\PP^1)^n;\Q)$ as appropriate. 
Then 
\begin{equation}
    K^*_T\bigl((\PP^1)^n\bigr) \cong
    R(T)[L_1,\ldots,L_n]/\langle\{L_j^2-(b+b^{-1})L_j+1, \, 1 \leq j \leq
    n\}\rangle
 \end{equation} 
and 
\begin{equation} K^*_G\bigl((\PP^1)^n\bigr) \cong
    R(G)[L_1,\ldots,L_n]/\langle\{L_j^2-vL_j+1, \, 1 \leq j \leq n\}\rangle.
  \end{equation} 
We also have 
\begin{equation}
H_T^*\bigl((\PP^1)^n;\Q\bigr)
\cong\Z[\bar{b}][\bar{L}_1,\ldots,\bar{L}_n]
/\langle\{\bar{L}_j^2-\bar{b}^2, \, 1 \leq j \leq n \}\rangle
\end{equation}
and
\begin{equation}
H_G^*\bigl((\PP^1)^n;\Q\bigr)
\cong\Z[\bar{t}][\bar{L}_1,\ldots,\bar{L}_n]
/\langle\{\bar{L}_j^2-\bar{t}, \, 1 \leq j \leq n\}\rangle.
\end{equation}
\end{theorem}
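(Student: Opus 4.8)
The plan is to prove Theorem~\ref{theorem:P1summary} by a straightforward induction on $n$, using the single-factor computations (Lemmas~\ref{lemma:equivariant BottT}, \ref{lemma:equivariant Bott}, \ref{lemma:equivariant Bott cohomology T}, and~\ref{lemma:equivariant Bott cohomology}) as the inductive step, with base case $n=0$ where $(\PP^1)^0 = \pt$ and all four statements reduce to the standard identifications $K^*_T(\pt) \cong R(T)$, $K^*_G(\pt) \cong R(G)$, $H^*_T(\pt;\Q) \cong \Z[\bar b] \otimes \Q$, and $H^*_G(\pt;\Q) \cong \Z[\bar t] \otimes \Q$ recorded in the notation section.

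For the inductive step, suppose the presentation holds for $(\PP^1)^{n-1}$, and set $X = (\PP^1)^{n-1}$. First I would observe that $X$ satisfies the hypothesis $K^*_G(X) \cong K^*_T(X)^W$ needed to invoke Lemma~\ref{lemma:equivariant Bott}: this itself follows by induction, since Lemma~\ref{lemma:equivariant Bott} delivers $K^*_G(X \times \PP^1) \cong K^*_T(X \times \PP^1)^W$ as part of its conclusion, and the base case $X = \pt$ gives $K^*_G(\pt) = R(G) = R(T)^W = K^*_T(\pt)^W$. So the condition propagates up the induction and never needs to be checked by hand. Then I would apply Lemma~\ref{lemma:equivariant BottT} with this $X$ to get
\[
K^*_T\bigl((\PP^1)^n\bigr) \cong K^*_T\bigl((\PP^1)^{n-1}\bigr)[\calL_X] / \bigl(\calL_X^2 - (b+b^{-1})\calL_X + 1\bigr),
\]
substitute the inductive presentation of $K^*_T((\PP^1)^{n-1})$, and rename $\calL_X = L_n$ — this is legitimate because, by Definition~\ref{definition:L_j and barL_j} and Lemma~\ref{relatingLtogamma}, the bundle $\calL_X$ built over $\PP(b^2_X \oplus \epsilon) \cong X \times \PP^1$ is precisely $\pi_n^*(\gamma^{-1})$ or $\pi_n^*(\gamma)$ according to the parity convention (the two differ by the line-bundle automorphism of $\PP^1$ swapping $\gamma$ and $\gamma^{-1}$, which does not change the quotient ring since the relation $L^2 - vL + 1$ is invariant under $L \mapsto L^{-1}$; see the Remark after Lemma on~\eqref{Lequation}). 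The $G$-theory statement follows identically from Lemma~\ref{lemma:equivariant Bott}, and the two cohomology statements follow identically from Lemmas~\ref{lemma:equivariant Bott cohomology T} and~\ref{lemma:equivariant Bott cohomology}, using $\bar{\calL}_X = \bar L_n$ and the relation $\bar{\calL}_X^2 = \bar t$ from Lemma~\ref{lemma:barcalL squared} (with $\bar b^2 = \bar t$ in the torus case).

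The one genuinely substantive point — and the main obstacle, such as it is — is making sure the relative $\calL_X$ coming out of equivariant Bott periodicity at stage $n$ really is the \emph{absolute} bundle $L_n = \pi_n^*(\gamma^{\pm 1})$ pulled back from the last factor, independent of the earlier factors. This is exactly the content of Lemma~\ref{relatingLtogamma}, which shows $\calL_X \cong \pi^*(\gamma^{-1})$ where $\pi\colon \PP(b^2_X \oplus \epsilon) \to \PP(b^2 \oplus \epsilon) \cong \PP^1$ is projection to the fiber $\PP^1$; under the identification $X \times \PP^1 \cong \PP(b^2_X \oplus \epsilon)$ this $\pi$ is exactly $\pi_n$, so $\calL_X = \pi_n^*(\gamma^{-1})$, matching the odd-index case of Definition~\ref{definition:L_j and barL_j} (and, after the harmless $\gamma \leftrightarrow \gamma^{-1}$ swap, the even-index case). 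Once this identification is in hand the induction is purely formal: the generators $L_1, \ldots, L_{n-1}$ are carried along untouched by $\bar p^*$, the new generator $L_n = \calL_X$ is adjoined with its single quadratic relation, and no new relations among the old generators are introduced because Bott periodicity says the displayed relation is the only one. I would write this up in a few lines, remarking that the four cases run in parallel and spelling out only the $K^*_T$ case in full.
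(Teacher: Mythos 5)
Your proposal is correct and follows essentially the same route as the paper, which proves Theorem~\ref{theorem:P1summary} precisely by "a simple induction argument beginning with $X=\pt$" using Lemmas~\ref{lemma:equivariant BottT}, \ref{lemma:equivariant Bott}, \ref{lemma:equivariant Bott cohomology T}, and~\ref{lemma:equivariant Bott cohomology}; you have merely spelled out the details the paper leaves implicit. Your observations that the hypothesis $K^*_G(X)\cong K^*_T(X)^W$ self-propagates through the induction, and that the parity convention in Definition~\ref{definition:L_j and barL_j} is harmless because the quadratic relation is invariant under $L\mapsto L^{-1}$, are both correct and correctly sourced to Lemma~\ref{relatingLtogamma} and the remark following equation~\eqref{Lequation}.
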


\begin{remark}
The relation $L_j^2 - v L_j +1=0$ appearing in the theorem above is the pullback to the $j$th factor
of the relation in Remark \ref{rem:Milnor}.
\end{remark}

\section{Computation of $\Phi_{2r}^*: H^*_G(F_{2r};\Q) \to
  H^*_G((\PP^1)^{2r};\Q)$}\label{sec:computation cohomology}

The main goal of this section is to give explicit descriptions of
$H^*_G(F_{2r};\Q)$ by using the map $\Phi_{2r}^*: H^*_G(F_{2r};\Q)
\to H^*_G((\PP^1)^{2r};\Q)$ and using the presentation of the codomain
given in Theorem~\ref{theorem:P1summary}. Specifically, we show in
Theorem~\ref{HTF2rQ}
that 
$$\Phi_{2r}^*: H^*_T(F_{2r};\Q) \to H^*_T((\PP^1)^{2r};\Q)$$ is
injective and concretely describe the subring
$\Phi_{2r}^*(H^*_T(F_{2r};\Q))$. In Theorem~\ref{HFG2rQ} we take Weyl
invariants to obtain the corresponding result in $H^*_G$. We will need
the cohomology results in the present section in order to complete the
analogous computation for $K$-theory in Section~\ref{section:ImPhiK}.

Consider the commutative diagram
\begin{equation}\label{F2diagram}
\begin{diagram}
\pt &\rTo^{i_1\circ i_1}& \PP^1 \times \PP^1 \cr
\dEqualto &&  \dTo_{\Phi_2}\cr
\pt &\rTo& F_2 \cr
\end{diagram}
\end{equation}
where $i_1 \circ i_1(\pt)$ is by definition the point $([T],[T]) \in
\PP^1 \times \PP^1 \cong G/T \times G/T$, and the bottom horizontal
arrow has image the `point at infinity' in the Thom space $F_2 \cong
\Thom(\tau)$. 

\begin{remark}\label{rk1}
Note that $i_1 \circ i_1$ is not a $G$-equivariant map, since the
image point $$i_1 \circ i_1(\pt) = ([T],[T])$$ is not a $G$-fixed point.
All other maps in the diagram are $G$-equivariant. 
\end{remark}

Recall that since $G=SU(2)$ is
simply-connected and hence
$$H^1(BG)=H^2(BG)=0,$$ the Serre spectral
sequence for~$H^*_G$ implies that $H^2_G(X) \cong H^2(X)$ for any
$G$-space $X$. Consider the diagram~\ref{maindiagram} for the special
case $\zzeta=\tau$ and $X=\PP^1$. In this setting, the map $\PP(\tau
\oplus \epsilon) \to \Thom(\tau) \cong F_2$ is the composition $\Phi_2
\circ \Theta$, where $\Theta: \PP(\tau \oplus \epsilon) \to \PP^1 \times \PP^1$ is the equivariant homeomorphism constructed in Lemma~\ref{lemma:PtauplusEpsilon-prodP1s}. (To be completely precise, we should include in the
notation the homeomorphism $\Thom(\tau) \cong F_2$ mentioned in
Proposition~\ref{prop:filtration quotient as Thom space} and which is
part of the definition of the map $\Phi_2$, but since this map plays
no role in the argument we will ignore this ambiguity.) The
fact that $j$ splits in diagram~\ref{maindiagram} implies
that $(\Phi_2 \circ \Theta)^*:
H^2_G(F_2;\Z) \cong H^2(F_2;\Z) \to H^2_G(\PP(\tau \oplus
\epsilon);\Z) \cong H^2(\PP(\tau \oplus \epsilon);\Z)$ is an
injection. Let $\bar{U}_{\tau} \in \Ker j^* \subset H^2_G(\PP(\tau \oplus \epsilon);\Z)
\cong H^2(\PP(\tau \oplus \epsilon);\Z)$ denote the (equivariant)
cohomology Thom class of the bundle $\tau$ over $\PP^1$. 
With these preliminaries in place we may now define the following. 

\begin{definition}\label{definition:L over F2}
We define $L$ as the unique line bundle over~$F_{2}$ 
such that $$c_1((\Phi_2 \circ \Theta)^*(L)) = (\Phi_2 \circ
\Theta)^*(c_1(L)) = \bar{U}_\tau.$$ 
\end{definition}

The next lemma is fundamental; it shows that the pullback of $L$ is
symmetric in $L_1$ and $L_2$ on $\PP^1 \times \PP^1$ (where the $L_i$
are as in Definition~\ref{definition:L_j and barL_j}). 

\begin{lemma}\label{PhiL}
Let $\Phi_2: \PP^1 \times \PP^1 \to F_2$ be the map given in
Definition~\ref{definition:Phi 2r} and $L, L_1, L_2$ be the line bundles 
given in Definitions~\ref{definition:L over F2}
and~\ref{definition:L_j and barL_j}. Then 
\[
\Phi_2^*(L) \cong L_1\otimes L_2. 
\]
\end{lemma}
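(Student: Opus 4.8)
The plan is to compute both sides as line bundles over $\PP^1 \times \PP^1$ by identifying their first Chern classes, then invoke the fact that line bundles over $\PP^1 \times \PP^1$ are determined by $c_1$. By Definition~\ref{definition:L over F2} we have $(\Phi_2 \circ \Theta)^*(c_1(L)) = \bar{U}_\tau \in H^2_G(\PP(\tau \oplus \epsilon);\Z)$, so applying $(\Theta^{-1})^*$ gives $c_1(\Phi_2^*(L)) = (\Theta^{-1})^*(\bar{U}_\tau)$ in $H^2_G(\PP^1 \times \PP^1;\Z) \cong H^2(\PP^1 \times \PP^1;\Z)$. Meanwhile $c_1(L_1 \otimes L_2) = \bar{L}_1 + \bar{L}_2$, where by Definition~\ref{definition:L_j and barL_j} $\bar{L}_1 = -c_1(\pi_1^*\gamma)$ and $\bar{L}_2 = +c_1(\pi_2^*\gamma)$. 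So the task reduces to showing that, under $\Theta$, the Thom class $\bar{U}_\tau$ pulls back to $-c_1(\pi_1^*\gamma) + c_1(\pi_2^*\gamma)$.

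To do this I would first use Lemma~\ref{lemma:compute Uzzeta} (its cohomology shadow, as recorded in the discussion around~\eqref{HTPequation}), which for the line bundle $\zzeta = \tau$ gives $\bar{U}_\tau = -\bar{\calL} - \bar{b}$-type formula; more directly, from $U_\tau = 1 - \beta_{\tau \oplus \epsilon}$ and the identity $\beta_{\tau \oplus \epsilon} \cong \gamma_{\tau \oplus \epsilon}^* \otimes \bar{p}^*(\tau)$ of Lemma~\ref{lemma:compute Uzzeta}, we get $c_1(U_\tau) = -c_1(\gamma_{\tau\oplus\epsilon}^*) - c_1(\bar{p}^*\tau) = c_1(\gamma_{\tau\oplus\epsilon}) - \bar{p}^* c_1(\tau)$. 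Now I use the two geometric facts from Lemma~\ref{lemma:PtauplusEpsilon-prodP1s}: under $\Theta$ the bundle projection $\bar{p}$ corresponds to $\pi_1: \PP^1 \times \PP^1 \to \PP^1$, and $\PP(\tau)$ corresponds to the diagonal $\Delta$. The first tells us $\bar{p}^*(\tau)$ corresponds to $\pi_1^*(\tau) \cong \pi_1^*((\gamma^*)^2)$ by Corollary~\ref{cor:Cherntau}, so $\bar{p}^* c_1(\tau) = -2 c_1(\pi_1^*\gamma)$. The remaining piece is to identify $\Theta^* c_1(\gamma_{\tau \oplus \epsilon})$: I would argue that $\gamma_{\tau \oplus \epsilon}$, which tautologically restricts on the diagonal $\PP(\tau)$ to $\gamma_\tau \cong \tau$ and restricts on the section-at-infinity $\PP(\epsilon)$ (i.e. $F_0$, by Remark~\ref{remark:F2}) to the trivial bundle, must correspond to a bundle of the form $\pi_1^*(\gamma^a) \otimes \pi_2^*(\gamma^b)$, and pin down $(a,b)$ by restricting to these two loci plus one vertical $\PP^1$-fiber. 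Combining, $\Theta^* c_1(U_\tau) = \Theta^* c_1(\gamma_{\tau\oplus\epsilon}) + 2c_1(\pi_1^*\gamma)$, and a short bookkeeping check gives $-c_1(\pi_1^*\gamma) + c_1(\pi_2^*\gamma) = \bar{L}_1 + \bar{L}_2$, as wanted.

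Finally, since $H^2(\PP^1 \times \PP^1;\Z) \cong \Z^2$ classifies complex line bundles on $\PP^1 \times \PP^1$ (they are all pullbacks of $\calO(m) \boxtimes \calO(n)$), equality of equivariant first Chern classes — which maps injectively to ordinary $H^2$ here because $G = SU(2)$ is simply connected, as noted before Remark~\ref{rk1} — forces $\Phi_2^*(L) \cong L_1 \otimes L_2$ as $G$-equivariant line bundles (equivariant line bundles over a space with $H^2_G \cong H^2$ are likewise classified by $c_1^G$).

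I expect the main obstacle to be the precise identification of $\Theta^* c_1(\gamma_{\tau\oplus\epsilon})$ — i.e. tracking how the tautological bundle of the projectivization transports across the homeomorphism $\Theta$ of Lemma~\ref{lemma:PtauplusEpsilon-prodP1s}. The cleanest route is probably to avoid this by instead restricting the proposed isomorphism $\Phi_2^*(L) \cong L_1 \otimes L_2$ along the two inclusions $\PP^1 \hookrightarrow \PP^1 \times \PP^1$ as $\PP^1 \times \{*\}$ and $\{*\} \times \PP^1$ (and along the diagonal), using that these composites with $\Phi_2$ factor through maps whose pullbacks on $L$ are computable from Definition~\ref{definition:L over F2} and the splitting of $j^*$; matching $c_1$ on each of these three curves determines a class in $H^2(\PP^1 \times \PP^1)$ uniquely once one also knows it is a genuine (not merely virtual) line-bundle class, which it is by construction.
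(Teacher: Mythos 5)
Your overall strategy---reduce to equivariant first Chern classes, observe that $H^2_G(\PP^1\times\PP^1;\Z)\cong H^2(\PP^1\times\PP^1;\Z)\cong\Z^2$ classifies line bundles since $G$ is simply connected, and determine the two unknown integer coefficients in $\Phi_2^*(L)\cong L_1^{\otimes m}\otimes L_2^{\otimes n}$ by restricting to suitable sub-loci---is exactly the approach taken in the paper. However, the step you yourself flag as the main obstacle, namely identifying $(\Theta^{-1})^*(\gamma_{\tau\oplus\epsilon})$ as a bundle $\pi_1^*(\gamma^a)\otimes\pi_2^*(\gamma^b)$ and pinning down $(a,b)$, is never actually carried out: ``a short bookkeeping check gives'' is asserted, not shown, and the answer is not one you would guess (working it out from the final result gives $a=-3$, $b=1$). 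This is a genuine gap. The paper avoids this awkward intermediate bundle by working directly with $\beta_{\tau\oplus\epsilon}$ (so that $\bar{U}_\tau=-c_1(\beta_{\tau\oplus\epsilon})$) and doing exactly two restrictions: to the fiber $k:\PP^1\into\PP(\tau\oplus\epsilon)$, which under $\Theta$ is $\{*\}\times\PP^1$, using that $k^*\bar{p}^*(\tau\oplus\epsilon)$ is trivial so that $k^*c_1(\beta)=-k^*c_1(\gamma_{\tau\oplus\epsilon})=-c_1(\gamma)$, giving $n=1$; and to the boundary $j:\PP(\tau)\into\PP(\tau\oplus\epsilon)$, the diagonal, on which the Thom class vanishes, giving $m=n$.

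Two smaller issues with the restrictions you propose. First, you identify the ``section at infinity'' $\PP(\epsilon)$ with $F_0$ citing Remark~\ref{remark:F2}, but that remark identifies $F_0$ with the point at infinity of $\Thom(\tau)$, which is the \emph{image} of the collapsed subspace $\PP(\tau)$ (the diagonal), not $\PP(\epsilon)$. Second, $\PP(\epsilon)$ corresponds under the homeomorphism of Lemma~\ref{lemma:PtauplusEpsilon-prodP1s} to the anti-diagonal $\{(p,-p)\}$ in the stereographic model, not to $\PP^1\times\{*\}$ or $\{*\}\times\PP^1$; and $\PP^1\times\{*\}$ is not a natural sub-locus of $\PP(\tau\oplus\epsilon)$ at all. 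Restricting to $\PP(\epsilon)$ therefore gives only one linear relation between $a$ and $b$, not either integer alone. The two restrictions that do the job are the fiber and the diagonal, as in the paper; to finish your argument you would need to carry these out explicitly.
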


\begin{remark}
The definition of the $L_j$ given in Definition~\ref{definition:L_j
  and barL_j}, which treated differently the cases when $j$ is odd and
$j$ is even, is motivated by Lemma~\ref{PhiL}. If we define $L_j$ by
the same formula for all~$j$, then we lose the symmetry in the
statement of Lemma~\ref{PhiL}. 
\end{remark}

\begin{proof} 
As noted above, $H^2_G(\PP^1 \times \PP^1; \Z) \cong H^2(\PP^1 \times
\PP^1;\Z) \cong \Z \times \Z$ since $G$ is simply-connected, and $H^2(\PP^1 \times
\PP^1)$ is generated by $c_1(L_1)$ and $c_1(L_2)$ by definition of the
$L_i$. Hence $c_1(\Phi_2^*(L)) = m \cdot c_1(L_1) + n \cdot c_1(L_2)$ for some
integers $m$ and $n$. Equivalently, $$\Phi_2^*(L) \cong L_1^{\otimes m}
\otimes L_2^{\otimes n}.$$ 
Moreover, since $\Theta$ is a $G$-equivariant homeomorphism, this is
equivalent to 
\[
c_1((\Phi_2 \circ \Theta)^*(L)) = m \cdot c_1(\Theta^*(L_1)) + n \cdot
c_1(\Theta^*(L_2)).
\]
We first give a different description of the left hand side of this
equation. Since $\tau$ is a line bundle, Lemma~\ref{lemma:compute
  Uzzeta} implies that $U_\tau = 1 - \beta_{\tau \oplus \epsilon} \in
\Ker j^* \subset K^*_G(\PP(\tau \oplus \epsilon))$ which in turn means
\(\bar{U}_\tau = - c_1(\beta_{\tau \oplus \epsilon}) \in
H^2_G(\PP(\tau \oplus \epsilon)) \cong H^2(\PP(\tau \oplus
\epsilon)).\) Thus, by definition $$c_1((\Phi_2 \circ \Theta)^*(L)) = -
c_1(\beta_{\tau \oplus \epsilon}),$$ so in fact it suffices to compute
the coefficients $a,b \in \Z$ in the equality 
\[
c_1(\beta_{\tau \oplus \epsilon}) = a \cdot c_1 (\Theta^*(L_1)) + b
\cdot 
c_1(\Theta^*(L_2)).
\]
By Lemma~\ref{lemma:PtauplusEpsilon-prodP1s}, the composition $\Theta
\circ k: \PP^1 \to \PP(\tau \oplus \epsilon) \to \PP^1 \times \PP^1$
(where $k$ is the inclusion of the fiber in~\eqref{maindiagram})
is the inclusion of $\PP^1$ as the second factor. This implies
$k^*\Theta^*L_1$ is the trivial bundle, so $k^*c_1(\Theta^*L_1) = 0$ 
and $k^*c_1(\beta_{\tau\oplus \epsilon}) = b k^*c_1(\Theta^*L_2) = b
c_1(L_2)$. Next recall that the defining equation of $\beta_{\tau
  \oplus \epsilon}$ is 
\[
\gamma_{\tau \oplus \epsilon} \oplus \beta_{\tau \oplus \epsilon} =
\bar{p}^*(\tau \oplus \epsilon)
\]
so in particular $k^*(\gamma_{\tau \oplus \epsilon} \oplus \beta_{\tau
  \oplus \epsilon})$, being the restriction of $\bar{p}^*(\tau \oplus
\epsilon)$ to a fiber of $\bar{p}$, is the trivial bundle. Hence 
\[
c_1(k^*\gamma_{\tau \oplus \epsilon}) = k^*c_1(\gamma_{\tau \oplus
  \epsilon}) = - k^*c_1(\beta_{\tau \oplus \epsilon}).
\]
On the other hand, $k^*(\gamma_{\tau \oplus \epsilon}) \cong \gamma$
by definition of the tautological bundle, so $c_1(k^*\gamma_{\tau
  \oplus \epsilon}) =  c_1(L_2)$ by definition of $L_2$. Hence
$k^*c_1(\beta_{\tau \oplus \epsilon}) = - c_1(L_2)$ and we conclude
$b=-1$ and hence $n=1$. 

It remains to compute the coefficient $m$. Again by
Lemma~\ref{lemma:PtauplusEpsilon-prodP1s}, the composition $\Theta
\circ j: \PP^1 \cong \PP(\tau) \to \PP(\tau \oplus \epsilon) \to
\PP^1\times \PP^1$ is the inclusion of $\PP^1$ as the diagonal in the
direct product. This implies that $(\Theta \circ j)^*(L_1) =
\gamma^{-1}$ and $(\Theta \circ j)^*(L_2) = \gamma$, so $(\Theta \circ
j)^*(L_1^m \otimes L_2^n) = 1$ if and only if $n=m$. 
Thus $n=m=1$, as desired. 

\end{proof}

The following is immediate from Lemma~\ref{PhiL}. Recall that we also denote by $\bar{L}_i$
the Chern class $c_1^T(L_i)$ of the $L_i$. 

\begin{lemma}
Under  $\Phi_2^*: H^*_T(F_2;\Z) \to H^*_T(\PP^1 \times
\PP^1)$, the Chern class $\bar{L} := c_1^T(L) \in H^2_T(F_2;\Z)$ maps
to $\bar{L}_1 + \bar{L}_2$. 
\end{lemma}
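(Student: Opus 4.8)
The plan is to deduce this statement directly from Lemma~\ref{PhiL} by applying the equivariant first Chern class functor. We already know from Lemma~\ref{PhiL} that $\Phi_2^*(L) \cong L_1 \otimes L_2$ as $G$-equivariant (hence also $T$-equivariant) line bundles over $\PP^1 \times \PP^1$. Since $c_1^T$ is a natural transformation, $\Phi_2^*(c_1^T(L)) = c_1^T(\Phi_2^*(L)) = c_1^T(L_1 \otimes L_2)$. The remaining content is just the additivity of the first Chern class under tensor product of line bundles: $c_1^T(L_1 \otimes L_2) = c_1^T(L_1) + c_1^T(L_2) = \bar{L}_1 + \bar{L}_2$. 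Putting these together gives $\Phi_2^*(\bar{L}) = \bar{L}_1 + \bar{L}_2$, which is exactly the claim.

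So the structure of the argument is: first invoke Lemma~\ref{PhiL} to get the bundle-level isomorphism; second, apply $c_1^T$ to both sides and use naturality of $c_1^T$ with respect to the pullback map $\Phi_2^*$ on equivariant cohomology; third, use the standard fact that $c_1^T$ is additive on tensor products of $T$-equivariant line bundles (this holds for equivariant Chern classes just as in the non-equivariant case, since it follows from the splitting principle or directly from the classification of equivariant line bundles by $H^2_T(-;\Z)$, which the paper has already been using freely, e.g.\ in the proof of Lemma~\ref{lemma:compute Uzzeta}). Each of these three steps is routine; no real obstacle arises.

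One small point worth stating carefully: the bundles $\bar{L}, \bar{L}_1, \bar{L}_2$ are defined in the paper as Chern classes in $H^2_T$, but the isomorphism of Lemma~\ref{PhiL} was stated (and proved) at the level of line bundles, using the identification $H^2_G(X;\Z) \cong H^2(X;\Z)$ valid because $G=SU(2)$ is simply connected. Since everything takes place in degree $2$, the passage between $G$- and $T$-equivariant statements and between bundles and Chern classes is harmless, and I would simply remark that the claim is immediate upon taking $c_1^T$ of the isomorphism in Lemma~\ref{PhiL} and using that $c_1^T$ is additive under tensor product. This is essentially a one-line proof, which is why the paper labels it ``immediate.''
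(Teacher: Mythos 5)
Your argument is exactly the paper's intended proof: the paper simply notes that the statement is ``immediate from Lemma~\ref{PhiL},'' and you have correctly spelled out that this means applying $c_1^T$ to the bundle isomorphism $\Phi_2^*(L)\cong L_1\otimes L_2$ and using naturality plus additivity of the equivariant first Chern class on tensor products. No gaps; same route.
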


From \cite{HJS12} it follows
that the inclusion $F_{2r-2} \to F_{2r}$ induces isomorphisms on
$$H_G^2(F_{2r};\Z) \cong \Z$$ for all $r>0$. By slight abuse of notation
we define the line bundle $L$ on $F_{2r}$ to be the unique bundle such
that the composite inclusion $F_2 \to F_4 \to \cdots \to
F_{2r}$ pulls $L$ on $F_{2r}$ back to the bundle $L$ over $F_2$ given in
Definition~\ref{definition:L over F2}. Since the inclusion is a
$T$-equivariant map and the bundle $L$ of Definition~\ref{definition:L
  over F2} is a $T$-bundle, the bundle $L$ over $F_{2r}$ is also a
$T$-bundle. 

A straightforward generalization of the argument given for
Lemma~\ref{PhiL} yields the following.

\begin{lemma}\label{lemma:s1 in image}
Let $\Phi_{2r}: (\PP^1)^{2r} \to F_{2r}$ be the map given in
Definition~\ref{definition:Phi 2r} and let $L$ be the line bundle over
$F_{2r}$ defined above. Let $L_i, 1 \leq i \leq 2r$, be the line
bundles given in Definition~\ref{definition:L_j and barL_j}. Then
\[
\Phi_{2r}^*(L) \cong L_1 \otimes \cdots \otimes L_{2r}.
\]
In particular, 
\[
\Phi_{2r}^*(c_1^T(L)) = \bar{L}_1 + \cdots + \bar{L}_{2r} \in
H^2_T((\PP^1)^{2r};\Q).
\]
\end{lemma}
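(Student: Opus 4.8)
The plan is to reduce everything to the already-proved case $r=1$ (Lemma~\ref{PhiL}) by using the fact that, by Definition~\ref{definition:Phi 2r}, the map $\Phi_{2r}$ factors through matrix multiplication: writing $\Phi_2^{\times r}\colon(\PP^1\times\PP^1)^r\to(F_2)^r$ for the $r$-fold Cartesian product of $\Phi_2$ and $m\colon(F_2)^r\to F_{2r}$ for the multiplication map of~\eqref{matmult}, one has $\Phi_{2r}=m\circ\Phi_2^{\times r}$ under the identification $(\PP^1)^{2r}\cong(\PP^1\times\PP^1)^r$ in which the $(2k-1)$st and $(2k)$th factors of $(\PP^1)^{2r}$ form the $k$th copy of $\PP^1\times\PP^1$. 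All the maps in sight are $G$-equivariant, so I would carry out the pullback computation at the level of $G$-equivariant line bundles (equivalently, of $G$-equivariant first Chern classes), which is convenient because $G=SU(2)$ is simply connected and hence $H^2_G(Y;\Z)\cong H^2(Y;\Z)$ for the spaces $Y$ that occur; the $T$-equivariant Chern-class identity in the statement is then obtained in the last line by applying $c_1^T$ to a bundle isomorphism.

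The first step is to show that $m^*(L)\cong\bigotimes_{k=1}^r p_k^*(L)$ over $(F_2)^r$, where $p_k\colon(F_2)^r\to F_2$ is the $k$th projection and $L$ denotes on each side the bundle of Definition~\ref{definition:L over F2}. By \cite{HJS12} the inclusions $F_{2r-2}\hookrightarrow F_{2r}$ induce isomorphisms $H^2_G(F_{2r};\Z)\cong\Z$, with $c_1^G(L)$ a generator; and $H^2_G\bigl((F_2)^r\bigr)\cong H^2\bigl((F_2)^r\bigr)\cong\bigoplus_{k=1}^r p_k^*H^2(F_2)$ by the K\"unneth theorem, using $H^1(F_2)=0$ (which follows from $F_2\cong_G\Thom(\tau)$). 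Hence $m^*\bigl(c_1^G(L)\bigr)=\sum_{k=1}^r a_k\,p_k^*\bigl(c_1^G(L)\bigr)$ for integers $a_k$. To pin down $a_k$, let $\iota_k\colon F_2\to(F_2)^r$ place a loop in the $k$th slot and the basepoint $F_0$ (the constant loop at the identity, which is a $G$-fixed point) in the remaining slots; then $\iota_k$ is $G$-equivariant and $m\circ\iota_k$ is precisely the standard inclusion $F_2\hookrightarrow F_{2r}$, since multiplying a loop by constant identity loops leaves it unchanged. By the defining property of $L$ over $F_{2r}$ this gives $\iota_k^*m^*\bigl(c_1^G(L)\bigr)=c_1^G(L)$, while $\iota_k^*\bigl(\sum_l a_l p_l^*c_1^G(L)\bigr)=a_k\,c_1^G(L)$; so $a_k=1$ for all $k$, and since $G$-equivariant line bundles are classified by $c_1^G$ we conclude $m^*(L)\cong\bigotimes_{k=1}^r p_k^*(L)$.

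For the second step, since $p_k\circ\Phi_2^{\times r}=\Phi_2\circ q_k$ with $q_k\colon(\PP^1\times\PP^1)^r\to\PP^1\times\PP^1$ the $k$th projection, the first step together with Lemma~\ref{PhiL} gives
\[
\Phi_{2r}^*(L)=\bigl(\Phi_2^{\times r}\bigr)^*m^*(L)\cong\bigotimes_{k=1}^r q_k^*\,\Phi_2^*(L)\cong\bigotimes_{k=1}^r q_k^*(L_1\otimes L_2).
\]
Under the chosen identification $q_k^*$ sends $L_1=\pi_1^*(\gamma^{-1})$ to $\pi_{2k-1}^*(\gamma^{-1})=L_{2k-1}$ and $L_2=\pi_2^*(\gamma)$ to $\pi_{2k}^*(\gamma)=L_{2k}$, which matches Definition~\ref{definition:L_j and barL_j} precisely because $2k-1$ is odd and $2k$ is even; therefore $\Phi_{2r}^*(L)\cong\bigotimes_{k=1}^r(L_{2k-1}\otimes L_{2k})=L_1\otimes\cdots\otimes L_{2r}$. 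Applying $c_1^T$ to this isomorphism of $T$-equivariant line bundles yields $\Phi_{2r}^*\bigl(c_1^T(L)\bigr)=\bar L_1+\cdots+\bar L_{2r}$ in $H^2_T\bigl((\PP^1)^{2r};\Q\bigr)$.

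The one place that needs genuine care is the geometric claim in the first step that $m\circ\iota_k$ equals the standard inclusion $F_2\hookrightarrow F_{2r}$: this is immediate once the definitions of the multiplication map~\eqref{matmult} and of $L$ on $F_{2r}$ are unwound, but it is what makes the whole argument go. A secondary reason for working with $c_1^G$ rather than $c_1^T$ until the end is that the $T$-equivariant form of the first step would a priori permit an extra summand proportional to $\bar b=c_1^T(b)$; that term does vanish (because $L$ restricts trivially to $F_0$, being a one-dimensional representation of the simply connected group $SU(2)$), but routing the argument through $H^2_G\cong H^2$ sidesteps the issue entirely.
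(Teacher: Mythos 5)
Your proof is correct. The paper's own proof of this lemma is literally the single sentence ``A straightforward generalization of the argument given for Lemma~\ref{PhiL} yields the following,'' so there is nothing to compare line-by-line; what you have done is supply one concrete way of carrying out that generalization. The particular route you chose --- factoring $\Phi_{2r}=m\circ\Phi_2^{\times r}$, computing $m^*(L)\cong\bigotimes_k p_k^*(L)$ via the K\"unneth decomposition of $H^2\bigl((F_2)^r\bigr)$ together with the slot inclusions $\iota_k$ (valid because the basepoint $F_0$ is a $G$-fixed point and $m\circ\iota_k$ is the canonical inclusion $F_2\hookrightarrow F_{2r}$), and then invoking Lemma~\ref{PhiL} once per $\PP^1\times\PP^1$ factor --- is clean and arguably more structural than a literal re-run of the Lemma~\ref{PhiL} coefficient computation on all $2r$ factors: it isolates the multiplicative map $m$ of~\eqref{matmult} as the only new geometric input beyond the $r=1$ case. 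Your side remark about working in $H^2_G\cong H^2$ rather than $H^2_T$ to avoid an a priori $\bar b$-summand is also well taken, and your observation that the $q_k^*$ sends $L_1,L_2$ to $L_{2k-1},L_{2k}$ is exactly the parity bookkeeping Definition~\ref{definition:L_j and barL_j} was designed to make work. All the hypotheses you use ($H^1(F_2)=0$ from $F_2\cong_G\Thom(\tau)$, $G$-equivariance of each map, triviality of the constant-map pullbacks on $\tilde H^2$) are verified or easily checked, so the argument is complete.
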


We are now ready to prove the main results of this section. 
Let $s_j(y_1,\ldots,y_n)$ denote the $j$th elementary symmetric polynomial
in the variables~$y_1,\ldots,y_n$, where by standard convention $s_0:=1$.
We let $\bar{s}_j$ denote the element 
\begin{equation}\label{eq:def barsj}
\bar{s}_j(\bar{L}_1,\ldots, \bar{L}_{2r})\in H^*_G\bigl((\PP^1)^{2r}\bigr)
\subset H^*_T\bigl((\PP^1)^{2r}\bigr). 
\end{equation}
The next result proves that the image of $\Phi_{2r}^*:
H^*_T(F_{2r};\Q) \to H^*_T((\PP^1)^{2r};\Q)$ is generated precisely by
the $\bar{s}_j$, i.e., it is the subalgebra of symmetric polynomials
in the elements $\bar{L}_i$ with respect to the presentation of
$H^*_T((\PP^1)^{2r};\Q)$ given in Theorem~\ref{theorem:P1summary}. 

\begin{theorem}\label{HTF2rQ}
Let $G=SU(2)$ and $T \subset G$ be the standard maximal torus. Let
$\Phi_{2r}: (\PP^1)^{2r} \to F_{2r}$ be the map given in
Definition~\ref{definition:Phi 2r}. Then:
\begin{enumerate} 
\item The pullback $\Phi_{2r}^*: H^*_T(F_{2r};\Q) \to H^*_T\bigl((\PP^1)^{2r};\Q\bigr)$ is
  injective. 
\item The ring $H^*_T(F_{2r};\Q)$, or equivalently the image of $\Phi_{2r}^*: H^*_T(F_{2r};\Q) \to
  H^*_T((\PP^1)^{2r};\Q)$ is the symmetric subalgebra of
  $H^*_T((\PP^1)^{2r};\Q)$. More specifically,  
\begin{equation*}
\begin{split} 
H_T^*(F_{2r};\Q) & \cong\Phi_{2r}^*\bigl(H_T^*(F_{2r};\Q)\bigr) \\ 
&= \bigl(H_T^*(\PP^1)^{2r};\Q\bigr)^{S_{2r}} \\
&=\{\mbox{symmetric polynomials
  in~$H_T^*\bigl((\PP^1)^{2r};\Q\bigr)$}\} \\ 
&= \mbox{the $H^*_T(\pt;\Q)$-subalgebra
of $H_T^*\bigl((\PP^1)^{2r};\Q\bigr)$ generated
by $\bar{s}_1,\ldots,\bar{s}_{2r}$} \\ 
&= \mbox{the $H^*_T(\pt;\Q)$-submodule
of $H_T^*\bigl((\PP^1)^{2r};\Q\bigr)$ generated
by $\bar{s}_0,\ldots,\bar{s}_{2r}$}. \\
\end{split} 
\end{equation*}
\end{enumerate}
\end{theorem}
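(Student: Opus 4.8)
The plan is to establish the chain of equalities by separating two assertions: (a) that the image of $\Phi_{2r}^*$ lands in the symmetric subalgebra $\bigl(H_T^*((\PP^1)^{2r};\Q)\bigr)^{S_{2r}}$ and in fact equals it, and (b) that $\Phi_{2r}^*$ is injective; the two combined yield the stated isomorphism $H_T^*(F_{2r};\Q) \cong \bigl(H_T^*((\PP^1)^{2r};\Q)\bigr)^{S_{2r}}$. The identification of the $S_{2r}$-invariants with the subalgebra generated by $\bar{s}_1,\ldots,\bar{s}_{2r}$ is a standard consequence of the explicit presentation in Theorem~\ref{theorem:P1summary}: since $H_T^*((\PP^1)^{2r};\Q)$ is the free module over $\Z[\bar b]$ on the monomials $\bar{L}_1^{\varepsilon_1}\cdots \bar{L}_{2r}^{\varepsilon_{2r}}$ with $\varepsilon_i \in \{0,1\}$, and the ring is $\Z[\bar b]$-algebra generated by the $\bar L_i$ subject only to $\bar L_i^2 = \bar t = \bar b^2$, the $S_{2r}$-invariants are generated as an algebra by the elementary symmetric functions $\bar{s}_j(\bar L_1,\ldots,\bar L_{2r})$; and because each $\bar L_i$ squares to the scalar $\bar t$, every symmetric monomial reduces to a $\Z[\bar b]$-linear combination of the squarefree symmetric monomials, so in fact the $\bar s_0,\ldots,\bar s_{2r}$ already span the invariants as a module. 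The reductions $\bar{s}_j \bar{s}_k = (\text{scalar combination of } \bar{s}_\ell)$ are a routine bookkeeping computation using $\bar L_i^2 = \bar t$.

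The substantive content is showing that $\Phi_{2r}^*$ surjects onto the invariants and is injective. For surjectivity, I would argue by induction on $r$. The base case $r=1$ is essentially Lemma~\ref{PhiL} together with the observation that $H_T^*(F_2;\Q)$ is generated over $H_T^*(\pt;\Q)$ by $\bar L = c_1^T(L)$, which maps to $\bar L_1 + \bar L_2 = \bar s_1$, so the image contains $\bar s_1$ and hence all of the invariant subalgebra (here one also needs the Thom isomorphism $\tilde H_T^*(F_2;\Q) = \tilde H_T^*(\Thom(\tau);\Q)$ to see the rank/size of $H_T^*(F_2;\Q)$ matches that of the invariant subalgebra). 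For the inductive step, I would use the matrix-multiplication map $F_2 \times F_{2(r-1)} \to F_{2r}$ of~\eqref{matmult} together with the compatibility $\Phi_{2r}$ decomposing through $\Phi_2 \times \Phi_{2(r-1)}$ built into Definition~\ref{definition:Phi 2r}, to express the relevant classes: the product structure on $F_{2r}$ induced by \eqref{matmult} should let one realize the higher elementary symmetric functions $\bar s_j$ as images of products of classes already known to be in the image at lower stages, completing the induction; the filtration-quotient description $F_{2r}/F_{2r-2} \cong \Thom(\tau^{2r-1})$ from Proposition~\ref{prop:filtration quotient as Thom space} gives the long exact sequence that controls the size of $H_T^*(F_{2r};\Q)$ at each stage and hence forces surjectivity once the generators are accounted for.

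For injectivity, I would again induct using the exact sequences coming from $F_{2r}/F_{2r-2} \cong \Thom(\tau^{2r-1})$: a diagram chase comparing the exact sequence for the pair $(F_{2r}, F_{2r-2})$ with that for $((\PP^1)^{2r}, \text{its analogous subspace})$ under $\Phi_{2r}^*$, together with the injectivity at stage $r-1$ and injectivity on the quotient (which follows from the Thom isomorphism and the explicit computation of how $\Phi_{2r}^*$ behaves on the Thom class), forces injectivity at stage $r$. The main obstacle I anticipate is precisely this inductive bookkeeping: one must keep careful track of how the matrix-multiplication map \eqref{matmult} interacts with the Thom-space decompositions on both sides, and verify that the image of $\Phi_{2r}^*$ really exhausts all of the (rather large) invariant subalgebra rather than just a proper sub-thing — i.e., matching dimensions/ranks of the relevant graded pieces at each filtration level. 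Since the ambient rings are free modules over $H_T^*(\pt;\Q)$ with explicitly enumerable bases, this is ultimately a rank count, but making it precise across the induction is where the real work lies. Once the $T$-equivariant statement is in hand, Theorem~\ref{HFG2rQ} follows by taking $W$-invariants, using $H_G^* \cong (H_T^*)^W$ for $\Q$-coefficients.
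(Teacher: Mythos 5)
Your proposal is in the right spirit---show the image of $\Phi_{2r}^*$ contains the symmetric subalgebra, then argue a rank count forces equality and injectivity---but the route you sketch is genuinely different from the paper's cohomology proof and more laborious, and it leaves the one genuinely hard step unaddressed.

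The paper's argument is direct and avoids all induction on $r$. It observes that over $\Q$, once $\bar{s}_1 \in \im\Phi_{2r}^*$ (Lemma~\ref{lemma:s1 in image}), the entire invariant subalgebra is already in the image: using $\bar L_j^2 = \bar t$, every power sum $\bar\psi^k = \sum_j \bar L_j^k$ reduces to a $\Q[\bar t]$-multiple of $\bar s_0$ or $\bar s_1$, and over $\Q$ the power sums generate all symmetric polynomials. So the subalgebra generated by $\bar s_1$ alone already exhausts $\bigl(H_T^*((\PP^1)^{2r};\Q)\bigr)^{S_{2r}}$. You use this fact implicitly for your base case $r=1$ but then abandon it and try to build $\bar s_j$ for $j>1$ inductively via the matrix-multiplication map $F_2 \times F_{2(r-1)} \to F_{2r}$. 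That step is problematic: matrix multiplication induces a map $H^*_T(F_{2r}) \to H^*_T(F_2 \times F_{2(r-1)})$, i.e. a coproduct, not a product, so it cannot ``realize higher elementary symmetric functions as products''---and it is in any case unnecessary given the observation above.

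The real technical content of the paper's proof, which your proposal waves at (``ultimately a rank count'') but does not supply, is the linear-independence statement: that the $\Q[\bar t]$-submodule of $H^*_T((\PP^1)^{2r};\Q)$ generated by $\bar s_0, \ldots, \bar s_{2r}$ is \emph{free} of rank $2r+1$. This does not follow from general nonsense; the paper proves it via a careful combinatorial argument using a ``signature'' grading on monomials $\bar t^b \bar L^\alpha$ (counting indices $j$ with $\alpha_j$ odd), showing that a relation $\sum_j n_j \bar t^{(M-j)/2} \bar s_j \in I$ forces each $n_j = 0$. Once this freeness is in hand, comparing Poincar\'e series with the known one for $H^*_T(F_{2r};\Q)$ (cited from \cite{HJS12}) forces all containments in~\eqref{eq:inclusions} to be equalities and gives injectivity in one stroke. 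Without that freeness lemma, or some equivalent replacement, your rank count has no teeth.

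Your alternative inductive strategy via the short exact sequences from $F_{2r}/F_{2r-2} \cong \Thom(\tau^{2r-1})$ is in fact close to what the paper does later in Section~\ref{section:ImPhiK} for $K$-theory, where the clean degree-by-degree dimension count is not available. It would likely also work for cohomology, but it requires the nontrivial computation that $\Phi_{2r}^*\kappa^*$ maps the Thom-module generators $\bar U, \bar x\bar U$ to a basis of $\Ker i^*$ intersected with the invariants (compare Lemma~\ref{HimPhi}, which invokes the Pontrjagin ring structure of $H_*(\Omega S^3)$). So while the outline is recoverable, you should make explicit (i) the $\Q$-coefficients simplification that collapses the surjectivity argument to Lemma~\ref{lemma:s1 in image}, and (ii) the freeness of the module on $\bar s_0, \ldots, \bar s_{2r}$, since neither is automatic.
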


For the proof of Theorem~\ref{HTF2rQ} we need the following notation. 
Let $\Q[\bar{t}]$ be a polynomial ring in one variable $\bar{t}$ and let 
$A=\Q[\bar{t},\bar{L}_1,\ldots, \bar{L}_N] = \Q[t][\bar{L}_1,\ldots, \bar{L}_N]$ denote a polynomial
ring in the variable $\bar{t}$ of degree $2$ and the variables $\bar{L}_1, \ldots,
\bar{L}_{2r}$, each of degree $1$. Then $A$ is also a $\Q[t]$-module. (The cohomology degree of the
corresponding cohomology classes are obtained by multiplying the
polynomial degree by $2$.) 
Let $I$ denote the ideal in $A$ generated by 
the homogeneous polynomials $\{\bar{L}_j^2 - \bar{t}, 1 \leq j \leq N\}$. Then by
Theorem~\ref{theorem:P1summary}, we know the ring
$H^*_T\bigl((\PP^1)^N;\Q\bigr)$ can be presented as the quotient
ring $A/I$. We will use this identification in the proof below.

Further, for a monomial $\bar{t}^b\bar{L}^\alpha$ where $\alpha=(\alpha_1,\ldots,
\alpha_{2r})$, we define its
{\em signature} by
$$\sig(\bar{t}^b \bar{L}^\alpha):=\sig(\alpha):= 
\# \textup{$j$ such that  $\alpha_j$ is odd} .$$
By definition, the signature takes values in $\{0, 1, \ldots, 2r\}$
and induces a $\Q[\bar{t}]$-module decomposition $\oplus_{s=0}^{2r}
A_{\sig(\alpha)=s}$ of $A$, where $A_{\sig(\alpha)=s}$ consists of the
$\Q[\bar{t}]$-submodule of $A$ generated by the monomials of signature $s$. 
Notice that $I$ is homogeneous with respect to this grading in the sense that $I =
\oplus_{s=0}^{2r} (I \cap A_{\sig(\alpha)=s})$ since each monomial of the generators $\bar{L}^2_j - \bar{t}$ are of signature $0$.  
Finally, for a fixed degree $M>0$ and for a multi-index $\alpha =
(\alpha_1, \ldots, \alpha_{2r}),$ we set 
$$b(\alpha) := \frac{M - (\alpha_1 + \ldots + \alpha_{2r})}{2} $$
so that the monomial $\bar{t}^{b(\alpha)}\bar{L}^\alpha$ has total
degree~$M$.
Similarly we set 
\[
b'(\alpha) = \frac{M-2 - (\alpha_1 + \ldots + \alpha_{2r})}{2} =
b(\alpha)-1 
\]
so that the monomial $\bar{t}^{b'(\alpha)}\bar{L}^\alpha$ has total
degree $M-2$.

\begin{proof} 

First we claim that the $H^*_T(\pt)$-subalgebra generated
by $\bar{s}_1,\ldots,\bar{s}_{2r}$ is contained in ${\im} \Phi_{2r}^*$.
Recall that, working over $\Q$, the monomials in the power functions 
$\bar{\psi}^k:=\bar{L}_1^k+\ldots +\bar{L}_{2r}^k$ form a basis for all symmetric polynomials in $\bar{L}_j$. 
Using the relations $\bar{L}^2_i - \bar{t}$, this means we can express the 
(equivalence class of) any symmetric polynomial in the $\bar{L}_j$ as a 
polynomial in $\bar{s}_1$. We saw in Lemma~\ref{lemma:s1 in image}
above that $\bar{s}_1$ is in the image of $\Phi_{2r}^*$, so the claim
follows.

Note that we have the natural inclusions 
\begin{equation}\label{eq:inclusions} 
\mbox{$\Q[t]$-module generated by  $ \bar{s}_0,\ldots,\bar{s}_{2r}$}
\subseteq
\mbox{$\Q[t]$-algebra generated by  $ \bar{s}_1,\ldots,\bar{s}_{2r}$}
 \subseteq \im \Phi_{2r}^*. 
\end{equation}
We show next that the $\Q[t]$-submodule generated by $\bar{s}_0, \ldots,
\bar{s}_{2r}$ is a free $\Q[t]$-module of rank
$2r+1$.

Since $I$ is a homogeneous ideal in the usual grading on $A$, in order
to prove the claim, it suffices to prove that if 
(the equivalence class of) a linear combination $y = \sum_{j = 0}^{2r} n_j t^{\frac{M-j}{2}} \bar{s}_j$ of
homogeneous degree $M$ is equal to $0$ in the quotient ring 
$A/I \cong H^*_T((\PP^1)^{2r};\Q)$, where each $n_j \in \Q$, then each
$n_j$ must be equal to $0$. (Here we use the convention that if
$\frac{M-j}{2}$ is not integral then $n_j$ is automatically equal to
$0$.) 
Suppose we have such a $y$. The equivalence class of $y$ is $0$ in $A/I$ if and only if $y \in I$,
so there exists a relation 
\begin{equation} \label{eq1}
y = \sum_{j=1}^N a_j(\bar{L}_j^2 -\bar{t}) 
\end{equation}
in $A$, where $a_j = a_j(\bar{t},\bar{L}_1, \ldots, \bar{L}_N) \in A$.

Fix an integer $k$ with $0 \leq k \leq 2r$. Assume $Q := \frac{M-k}{2}$ is
integral. We wish to show that $n_k=0$. 
Taking the component of~\eqref{eq1} lying in $A_{s(\alpha)=k}$ yields the
equality 
\begin{equation}\label{eq1A}
n_k \bar{t}^Q \bar{s}_k = \sum_{j=1}^{2r} a_{j,k}(\bar{L}_j^2 -\bar{t}),
\end{equation}
where $a_{j,k}$ denotes the component of $a_j$ lying in
$A_{s(\alpha)=k}$. 
Let 
$c_{j,b,\alpha} \in \Q$ denote the coefficient of 
$\bar{t}^b {\bar{L}_1}^{\alpha_1} \ldots {\bar{L}_{2r}}^{\alpha_{2r}} $ in the
polynomial~$a_{j,k}$. By convention we set $c_{j, b,
  \alpha} = 0 $ if any entry of $\alpha \in \Z^{2r}$ is negative
or if $b'(\alpha)$ is not integral. Note also that $b$ must satisfy $b = b'(\alpha)$ for
$c_{j,b,\alpha}$ to be non-zero since $a_{j,k}$ is homogeneous of
degree $M-2$. 

Let $m_{b,\alpha} $ denote the coefficient of the monomial $\bar{t}^b{\bar{L}_1}^{\alpha_1} \ldots {\bar{L}_{2r}}^{\alpha_{2r}}$
after collecting terms on the right hand side of~\eqref{eq1}. 
It follows that 
\begin{equation} \label{eq2}
m_{b,\alpha} = \sum_j c_{j,b,(\alpha_1, \ldots, \alpha_j -2, \ldots, \alpha_{2r})} + 
\sum_j c_{j,b-1,\alpha}. 
\end{equation}
Now define for any $b \in \Z$ the expression 
$$C_b := \sum_{\{\alpha~s.t.\ b'(\alpha) = b\}}
\sum_j c_{j,b,\alpha}. $$

Note that a given coefficient of the form $c_{j, b, \gamma}$ occurs in
the first summation of~\eqref{eq2} for $2r$ different values of the
multi-index $\gamma$. 
Thus
\begin{equation} \label{eq2A}
\sum_{\{\alpha~s.t.~ b(\alpha) = b\}} m_{b,\alpha} = (2r) \cdot C_b +
C_{b-1}. 
\end{equation}
Equating this to the corresponding sum on the left hand side of~\eqref{eq1A}
yields
$$(2r) \cdot C_b+C_{b-1}=
\begin{cases} 0 & {\rm if\ } b\ne Q; \cr
n_k {2r\choose k} & {\rm if\ } b = Q. \cr
\end{cases}
$$ 
Since $C_{-1}=0$ we inductively conclude that $C_b=0$ for $b \le Q-1$, and
in particular, $C_{Q-1}=0$.

We next claim that $C_Q =0$, 
To see this, recall $a_{j,k}$ is of degree
$M-2$ and each monomial~$t^b L^\alpha$ appearing with non-zero
coefficient in $a_{j,k}$ must also have
signature $k$. This implies that the degree of the
$L^\alpha$ must be $\geq k$ (and hence $b$ must be $\leq Q-1$), so any
coefficient of the form $c_{j, Q, \alpha}$ is $0$. Hence $C_Q=0$ and
we conclude 
\[
C_{Q-1} = n_k {2r \choose k} = 0
\]
from which it also immediately follows that for any $k$ with $0 \leq k
\leq 2r$, we also have $n_k=0$. 

We conclude that the $\{\bar{s}_j\}_{j=0}^{2r}$ generate a free
$\Q[t]$-module in $H^*_T((\PP^1)^{2r};\Q)$. Denote this
free module by $\mathcal{M}$. Let $\mathcal{M}^{2\ell}$ denote the (cohomology)
degree-$2\ell$ piece of $\mathcal{M}$. Since the (cohomology) degree of $\bar{s}_j$ is
$2j$, the dimension of $\mathcal{M}^{2\ell}$ as a $\Q$-vector space is $\ell+1$
for $0 \leq \ell \leq 2r$ and $2r+1$ for $\ell > 2r$. 
This agrees with the dimensions of $H_T^{2\ell}(F_{2r};\Q)$ for
all~$\ell$ \cite{HJS12}. 
Thus the containments in~\eqref{eq:inclusions}
must be equalities. In particular, $\im \Phi_{2r}^* \cong
H^*_T(F_{2r};\Q)$ and we conclude $\Phi_{2r}^*$ is injective. The rest
of the theorem also follows. 
\end{proof}

Taking Weyl invariants gives the following. 

\begin{theorem}\label{HFG2rQ}
Let $G=SU(2)$ and $T \subset G$ be the standard maximal torus. Let
$\Phi_{2r}: (\PP^1)^{2r} \to F_{2r}$ be the map given in
Definition~\ref{definition:Phi 2r}. Then:
\begin{enumerate} 
\item The pullback $\Phi_{2r}^*: H^*_G(F_{2r};\Q) \to H^*_G((\PP^1)^{2r};\Q)$ is
  injective. 
\item The ring $H^*_G(F_{2r};\Q)$, or equivalently the image of $\Phi_{2r}^*: H^*_G(F_{2r};\Q) \to
  H^*_G((\PP^1)^{2r};\Q)$ is the symmetric subalgebra of
  $H^*_G((\PP^1)^{2r};\Q)$. More specifically,  
\begin{equation*}
\begin{split} 
H^*_G(F_{2r};\Q) & \cong\Phi_{2r}^*\bigl(H_G^*(F_{2r};\Q)\bigr) \\
&= \bigl(H_G^*(\PP^1)^{2r};\Q\bigr)^{S_{2r}} \\
&=\{\mbox{symmetric polynomials in~$H_G^*\bigl(\PP^1)^{2r};\Q\bigr)$}\}\\
&= \mbox{the $H^*_G(\pt;\Q)$-subalgebra
of $H_G^*\bigl((\PP^1)^{2r};\Q\bigr)$ generated
by $\bar{s}_1,\ldots,\bar{s}_{2r}$}\\
&= \mbox{the $H^*_G(\pt;\Q)$-submodule
of $H_G^*\bigl((\PP^1)^{2r};\Q\bigr)$ generated
by $\bar{s}_0,\ldots,\bar{s}_{2r}$}.\\
\end{split} 
\end{equation*}
\end{enumerate}
\end{theorem}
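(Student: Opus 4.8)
The plan is to deduce Theorem~\ref{HFG2rQ} from the torus-equivariant statement (Theorem~\ref{HTF2rQ}) by taking $W$-invariants, exactly as was done for the analogous $K$-theory lemmas (Lemma~\ref{lemma:equivariant Bott}, Lemma~\ref{lemma:equivariant Bott cohomology}). The starting point is the fact, valid with $\Q$-coefficients, that $H^*_G(Y;\Q)\cong H^*_T(Y;\Q)^W$ for every $G$-space $Y$ (the footnote warns this can fail over $\Z$, but we are working rationally throughout this section). Applying this to $Y=F_{2r}$ and to $Y=(\PP^1)^{2r}$ gives $H^*_G(F_{2r};\Q)\cong H^*_T(F_{2r};\Q)^W$ and $H^*_G((\PP^1)^{2r};\Q)\cong H^*_T((\PP^1)^{2r};\Q)^W$.

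Next I would observe that the map $\Phi_{2r}$ is $G$-equivariant, so $\Phi_{2r}^*$ commutes with the forgetful maps $H^*_G(-;\Q)\to H^*_T(-;\Q)$; under the identifications above, the $G$-equivariant pullback $\Phi_{2r}^*\colon H^*_G(F_{2r};\Q)\to H^*_G((\PP^1)^{2r};\Q)$ is simply the restriction of the $T$-equivariant pullback to $W$-invariants. Since Theorem~\ref{HTF2rQ}(1) says the $T$-equivariant map is injective, its restriction to the $W$-invariant subspace is a fortiori injective, giving part (1). For part (2), taking $W$-invariants of the chain of equalities in Theorem~\ref{HTF2rQ}(2) yields
\[
H^*_G(F_{2r};\Q)\cong\bigl(\Phi_{2r}^*(H^*_T(F_{2r};\Q))\bigr)^W=\bigl((H^*_T((\PP^1)^{2r};\Q))^{S_{2r}}\bigr)^W.
\]
The two group actions here commute: $W\cong S_2$ acts on $H^*_T((\PP^1)^{2r};\Q)$ via the coefficient ring (sending $\bar{b}\mapsto-\bar{b}$, equivalently fixing $\bar t=\bar b^2$ and fixing each $\bar{L}_i$ by Lemma~\ref{relatingLtogamma} and the fact that the $L_i$ are $G$-bundles), while $S_{2r}$ permutes the factors. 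Hence taking invariants in either order gives the same answer, and $\bigl((H^*_T((\PP^1)^{2r};\Q))^{S_{2r}}\bigr)^W=(H^*_G((\PP^1)^{2r};\Q))^{S_{2r}}$, the symmetric subalgebra of $H^*_G((\PP^1)^{2r};\Q)$ as presented in Theorem~\ref{theorem:P1summary}.

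Finally I would transport the remaining descriptions: the $\bar{s}_j$ were already defined in~\eqref{eq:def barsj} as elements of $H^*_G((\PP^1)^{2r};\Q)$, and taking $W$-invariants of the last two lines of Theorem~\ref{HTF2rQ}(2) identifies $H^*_G(F_{2r};\Q)$ with the $H^*_G(\pt;\Q)$-subalgebra generated by $\bar{s}_1,\ldots,\bar{s}_{2r}$ and with the $H^*_G(\pt;\Q)$-submodule generated by $\bar{s}_0,\ldots,\bar{s}_{2r}$, using $H^*_G(\pt;\Q)=\Q[\bar t]=\bigl(H^*_T(\pt;\Q)\bigr)^W$. The only point requiring a small argument is the commuting-actions claim and the exactness of $(-)^W$ on $\Q$-vector spaces (automatic in characteristic zero, which is why everything is stated rationally); there is no real obstacle here, since the heavy lifting — injectivity of $\Phi_{2r}^*$ and the identification of its image with the symmetric subalgebra — was already done at the torus level in Theorem~\ref{HTF2rQ}. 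The mildest subtlety to check carefully is that $W$ genuinely acts trivially on each $\bar{L}_i$, so that $S_{2r}$-invariance is preserved under $W$; this follows because each $L_i$ is pulled back from a $G$-equivariant bundle on a single $\PP^1$ factor.
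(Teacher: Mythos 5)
Your proposal is correct and takes essentially the same route as the paper: deduce both the injectivity of $\Phi_{2r}^*$ and the identification of its image from the $T$-equivariant statement (Theorem~\ref{HTF2rQ}) by passing to $W$-invariants, using $H^*_G(\,\cdot\,;\Q)\cong H^*_T(\,\cdot\,;\Q)^W$ and the observation that the $W$-action fixes each $\bar{L}_i$ (hence each $\bar{s}_j$) so only the coefficients in $H^*_T(\pt;\Q)$ are constrained. The paper's proof phrases the injectivity step via a commutative square of forgetful maps rather than explicitly identifying the $G$-pullback as a restriction to $W$-invariants, but this is the same argument.
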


\begin{proof}
The fact that $\Phi_{2r}^*: H^*_G(F_{2r};\Q) \to H^*_G((\PP^1)^{2r};\Q)$ is
  injective follows from the commutative diagram 
\begin{diagram}
H^*_G(F_{2r};\Q) &\rTo^{\Phi_{2r}^*}  & H^*_G((\PP^1)^{2r};\Q) \\ 
\dTo&&\dTo \\ 
H^*_T(F_{2r};\Q) & \rTo^{\Phi_{2r}^*} & H^*_T((\PP^1)^{2r};\Q) \\
\end{diagram}
and the fact that both vertical arrows and the bottom horizontal arrow
are injective. 
For part (2), recall from Theorem~\ref{HTF2rQ} that 
$$H_T^*(F_{2r};\Q)\cong
\mbox{the $H^*_T(\pt;\Q)$-submodule
of $H_T^*\bigl((\PP^1)^{2r};\Q\bigr)$ generated
by $\bar{s}_1,\ldots,\bar{s}_{2r}$}.$$
It is clear that an $H_T^*(\pt;\Q)$-linear combination of 
$\bar{s}_0,\ldots,\bar{s}_{2r}$
is Weyl invariant if and only if its coefficients lie in
$\bigl(H_T^*(\pt);\Q\bigr)^W=H^*_G(\pt;\Q)$.
Thus we have
\begin{align*}
\im\Phi_{2r}\subset \bigl(H_T^*(F_{2r};\Q)\bigr)^W
&\cong\bigl(\mbox{$H_T^*(\pt;\Q)$-submodule generated
by $ \bar{s}_1,\ldots,\bar{s}_{2r}$}\bigr)^W\cr
&\cong \mbox{$H_G^*(\pt;\Q)$-submodule generated
by $ \bar{s}_0,\ldots,\bar{s}_{2r}$}.\cr
\end{align*}
The result follows. 
\end{proof}

\section{Computation of $\Phi_{2r}^*: K^*_G(F_{2r}) \to  K^*_G((\PP^1)^{2r})$}
\label{section:ImPhiK}

We now come to the technical heart of this manuscript, which is the
explicit computation of $K^*_T(F_{2r})$ and $K^*_G(F_{2r})$. By
results in \cite{HJS12} we know that $K^*_G(\Omega G)$ is the inverse
limit
of~$K^*_G(F_{2r})$ as $r \to \infty$, so knowledge of
$K^*_G(F_{2r})$ is a key step in the computation of $K^*_G(\Omega G)$.
This section is long, so we have divided the exposition into pieces.

\subsection{Preliminaries and general setup}\label{subsec:general} 

We first prove that, similar to the case of cohomology in the previous
section, the map $\Phi_{2r}:
(\PP^1)^{2r}\to(F_2)^r\to F_{2r}$ 
 induces an injection in equivariant $K$-theory.

\begin{proposition}\label{proposition:injective in KG}
Let $\Phi_{2r}:
(\PP^1)^{2r}\to F_{2r}$ be the map given in
Definition~\ref{definition:Phi 2r}. 
Then 
\begin{equation}\label{eq:injective-in-KG}
\Phi^*_{2r}: K^*_G(F_{2r})  \to K^*_G\bigl((\PP^1)^{2r}\bigr)
\end{equation}
is an injective ring homomorphism, and similarly
\begin{equation}\label{eq:injective-in-KT}
\Phi^*_{2r}: K^*_T(F_{2r}) \to K^*_T\bigl((\PP^1)^{2r}\bigr)
\end{equation}
is an injective ring homomorphism. 
\end{proposition}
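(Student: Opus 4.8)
The plan is to prove both statements uniformly, for $H\in\{T,G\}$, by induction on $r$; every tool used below (the equivariant Thom isomorphism of Theorem~\ref{thomisom}, equivariant Bott periodicity, Theorem~\ref{theorem:P1summary}, the Thom-space descriptions $F_2\cong\Thom(\tau)$ and $F_{2r}/F_{2r-2}\cong\Thom(\tau^{2r-1})$, and the vanishing of odd-degree equivariant $K$-theory of the spaces involved, which is part of \cite{HJS12} and Theorem~\ref{theorem:P1summary}) is available equivariantly for $T$ and $G$ alike. The structural point driving the induction is that $\Phi_{2r}$ respects the filtration $F_0\subseteq F_2\subseteq\cdots\subseteq F_{2r}$: by Lemma~\ref{lemma:PtauplusEpsilon-prodP1s} the homeomorphism $\Theta$ carries the diagonal $\Delta\subset\PP^1\times\PP^1$ onto $\PP(\tau)\subset\PP(\tau\oplus\epsilon)$, which the quotient map of Proposition~\ref{p:atiyah} sends to the basepoint of $\Thom(\tau)\cong F_2$, and by Remark~\ref{remark:F2} this basepoint is $F_0$, the identity for the matrix multiplication of~\eqref{matmult}. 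Hence $\Phi_{2r}$ maps the subspace $A_r:=(\PP^1\times\PP^1)^{r-1}\times\Delta$, which is homeomorphic to $(\PP^1)^{2r-1}$, into $F_{2r-2}$, and $\Phi_{2r}|_{A_r}$ equals $\Phi_{2(r-1)}$ precomposed with the projection $(\PP^1)^{2r-1}\to(\PP^1)^{2r-2}$ forgetting the last coordinate.

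The base case $r=1$ is immediate: by Definition~\ref{definition:Phi 2r} the map $\Phi_2$ is the composite of $\Theta^{-1}$ with the quotient map $\PP(\tau\oplus\epsilon)\to\PP(\tau\oplus\epsilon)/\PP(\tau)\cong\Thom(\tau)\cong F_2$, so $\Phi_2^*$ is $(\Theta^{-1})^*$ composed with the inclusion $\tilde{K}^*_H(\Thom(\tau))=\Ker j^*\hookrightarrow\tilde{K}^*_H(\PP(\tau\oplus\epsilon))$ of Lemma~\ref{lemma:identify kernel j}; this is injective because $(\Theta^{-1})^*$ is an isomorphism. Passing from the reduced to the unreduced statement is routine, since $\Thom(\tau)$ has an $H$-fixed basepoint and $K^*_H(\Delta)\cong K^*_H(\PP^1)$ contains $1$ as part of a free $R(H)$-basis by Theorem~\ref{theorem:P1summary}.

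For the inductive step I would apply the five lemma. Since all the spaces in sight have equivariant $K$-theory concentrated in even degrees, the long exact sequences of the pairs $(F_{2r},F_{2r-2})$ and $((\PP^1)^{2r},A_r)$ collapse into short exact sequences $0\to\tilde{K}^*_H(F_{2r}/F_{2r-2})\to K^*_H(F_{2r})\to K^*_H(F_{2r-2})\to 0$ and $0\to\tilde{K}^*_H((\PP^1)^{2r}/A_r)\to K^*_H((\PP^1)^{2r})\to K^*_H(A_r)\to 0$, and $\Phi_{2r}$ together with the inclusion $A_r\hookrightarrow(\PP^1)^{2r}$ induces a morphism between them. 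The right-hand vertical map $K^*_H(F_{2r-2})\to K^*_H(A_r)$ is $\mathrm{pr}^*\circ\Phi_{2(r-1)}^*$: it is injective because $\Phi_{2(r-1)}^*$ is injective by the inductive hypothesis and $\mathrm{pr}^*$ is injective since $K^*_H((\PP^1)^{2r-1})$ is free of rank $2$ over $K^*_H((\PP^1)^{2r-2})$ (Theorem~\ref{theorem:P1summary}). By the five lemma it therefore suffices to prove the left-hand vertical map is injective. By Proposition~\ref{prop:filtration quotient as Thom space} we have $F_{2r}/F_{2r-2}\cong\Thom(\tau^{2r-1})$, and $(\PP^1)^{2r}/A_r\cong(\PP^1)^{2r-2}_+\wedge\bigl((\PP^1\times\PP^1)/\Delta\bigr)\cong(\PP^1)^{2r-2}_+\wedge F_2$; so, via the equivariant Thom isomorphism (Theorem~\ref{thomisom}) and Künneth, the left-hand map is a map of free $R(H)$-modules, from a rank-$2$ copy of $K^*_H(\PP^1)$ to $K^*_H((\PP^1)^{2r-2})\otimes_{R(H)}\tilde{K}^*_H(F_2)$.

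The main obstacle is exactly this last injectivity, and I expect the argument to hinge on one of two routes. Directly: since $R(H)$ is an integral domain and both modules are free, one reduces to injectivity after passing to the fraction field, and then uses the explicit formula $U_{\tau^{2r-1}}=1-\beta_{\tau^{2r-1}\oplus\epsilon}$ for the Thom class (Lemma~\ref{lemma:compute Uzzeta}), the identification $\tau\cong_G\gamma^{-2}$ (Corollary~\ref{cor:Cherntau}), and the $K$-theoretic analogue of Lemmas~\ref{PhiL}--\ref{lemma:s1 in image}, namely $\Phi_{2r}^*(L)\cong L_1\otimes\cdots\otimes L_{2r}$, to compute the image of a Thom-isomorphism generator inside the presentation of Theorem~\ref{theorem:P1summary} and verify it is not a zero-divisor. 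Indirectly: the cohomological analogue of the left-hand vertical map is already injective, being the composite of the injection $\tilde{H}^*_H(F_{2r}/F_{2r-2};\Q)\hookrightarrow H^*_H(F_{2r};\Q)$ with $\Phi_{2r}^*$ (injective by Theorem~\ref{HTF2rQ}) followed by the injection $\tilde{H}^*_H((\PP^1)^{2r}/A_r;\Q)\hookrightarrow H^*_H((\PP^1)^{2r};\Q)$, so one deduces the $K$-theoretic statement from the naturality of the equivariant Chern character together with the fact that the equivariant Chern character is injective on finite $H$-CW complexes whose equivariant $K$-theory is $R(H)$-free (such as products of $\PP^1$'s and Thom spaces of line bundles over them). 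Either way, managing this step — the explicit Thom-class bookkeeping, or the cohomology-to-$K$-theory transfer — is the technical heart, and the remainder of the induction is formal.
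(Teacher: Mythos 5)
Your inductive framework is sound in outline, but as written the proposal has a genuine gap: the injectivity of the left-hand vertical map in your five-lemma diagram --- from $\tilde{K}^*_H\bigl(\Thom(\tau^{2r-1})\bigr)$ into $\tilde{K}^*_H\bigl((\PP^1)^{2r}/A_r\bigr)$ --- is not established. You label it the ``main obstacle'' and offer two candidate routes, but neither is carried out in checkable detail, and this is precisely the step where the geometry of $\Phi_{2r}$ actually enters; it cannot be waved away as formal.

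More importantly, the entire inductive scaffold (the choice of $A_r$, the identification of $(\PP^1)^{2r}/A_r$ with a smash product, the two short exact sequences, the five lemma) is unnecessary. The paper's proof consists of a single commutative square: $\Phi_{2r}^*$ on top in $K^*_G$, $\Phi_{2r}^*$ on the bottom in $H^{\hp}_G(\,\cdot\,;\Q)$, and Chern characters $\chG$ on the two verticals. The bottom map is injective by Theorem~\ref{HFG2rQ}; the left-hand $\chG$ is injective because $K^*_G(F_{2r})$ is torsion-free (a consequence of the module computation in \cite{HJS12}); hence the composite out of $K^*_G(F_{2r})$ is injective, which forces the top $\Phi_{2r}^*$ to be injective. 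The $T$-equivariant case is identical using $\chT$ and Theorem~\ref{HTF2rQ}. This is exactly your second (``indirect'') route, but applied to the whole of $\Phi_{2r}^*$ rather than to a filtration quotient: once one notices that the cohomology injectivity has already been proved globally in Section~\ref{sec:computation cohomology}, there is nothing left to decompose, and the transfer to $K$-theory is one step.
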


\begin{proof}

Consider the commutative diagram
\begin{diagram}
K^*_G (F_{2r}) &\rTo^{\Phi_{2r}^*}  & K^*_G \Bigl ((\PP^1)^{2r} \Bigr ) 
= K^*_G(\pt)[L_1, \dots, L_{2r}]/\mathord{\sim} \cr
\dTo_{\chG}&&\dTo_{\chG}\cr
H^{\hp}_G(F_{2r}; \Q)  & \rTo^{\Phi_{2r}^*} &
H^{\hp}_G \Bigl ( (\PP^1)^{2r}; \Q \Bigr )
= H^{\hp}(\pt;\Q) [\bar{L}_1, \dots, \bar{L}_{2r}]/\mathord{\sim} \cr
\end{diagram}
By Theorem~\ref{HFG2rQ} we know that
the bottom horizontal map is an injection.
The vertical maps are Chern character maps
and so are injective since all the groups are torsion free.
This implies that 
the upper horizontal map must also be injective.
The proof for $K^*_T$ is identical. 
\end{proof}

By the above proposition, in order to compute $K^*_T(F_{2r})$ and $K^*_G(F_{2r})$, it therefore
remains to compute their images in $K^*_T((\PP^1)^{2r})$ and
$K^*_G((\PP^1)^{2r})$ respectively. We will accomplish this by an
induction argument on the variable $r$. As a first step, we note the
following.

\begin{lemma}\label{lemma:induction on r geometric} 
The diagram
\begin{diagram}\label{eq:Tdiagram}
(\PP^1)^{2r-2}&\rTo^{\Phi_{2(r-1)}}&F_{2(r-1)} \cr
\dTo &&\dTo  \cr
(\PP^1)^{2r}&\rTo^{\Phi_{2r}}& F_{2r} \cr
\end{diagram}
commutes, where the right vertical arrow is the 
canonical inclusion and the left 
vertical arrow is the inclusion $(x_1, x_2, \ldots, x_{2r-2}) \mapsto
(x_1, \ldots, x_{2r-2}, [T], [T]) \in (\PP^1)^{2r} \cong (G/T)^{2r}$. 
\end{lemma}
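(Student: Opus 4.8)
The plan is to verify the commutativity of the diagram pointwise, by unwinding Definition~\ref{definition:Phi 2r} and following the last two $\PP^1$-coordinates once they have been set equal to $([T],[T])$.

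First I would record that, under the identification $(\PP^1)^{2r}\cong(\PP^1\times\PP^1)^r$ grouping consecutive pairs, Definition~\ref{definition:Phi 2r} exhibits $\Phi_{2r}$ as the composite
\[
(\PP^1\times\PP^1)^r \to \Thom(\tau)^r \cong F_2^r \to F_{2r},
\]
where the first arrow applies $\Theta^{-1}$ followed by the Thom-collapse $\PP(\tau\oplus\epsilon)\to\PP(\tau\oplus\epsilon)/\PP(\tau)$ in each factor, and the last arrow iterates the matrix-multiplication maps of~\eqref{matmult}. Since pointwise matrix multiplication of loops is associative with two-sided identity the constant loop $\mathbf{1}$, this gives, for $(x_1,\dots,x_{2r})\in(\PP^1)^{2r}$,
\[
\Phi_{2r}(x_1,\dots,x_{2r}) = \Phi_2(x_1,x_2)\cdot\Phi_2(x_3,x_4)\cdots\Phi_2(x_{2r-1},x_{2r}),
\]
where $\cdot$ denotes pointwise multiplication and $\Phi_2$ is the $r=1$ instance of the construction. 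The left vertical arrow of the diagram sets the last pair equal to $([T],[T])$, so the whole lemma reduces to identifying $\Phi_2([T],[T])$.

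The key step is then to show $\Phi_2([T],[T])=\mathbf{1}$. I would argue: $([T],[T])$ lies on the diagonal of $\PP^1\times\PP^1$, which by Lemma~\ref{lemma:PtauplusEpsilon-prodP1s} corresponds under $\Theta^{-1}$ to the subspace $\PP(\tau)\subset\PP(\tau\oplus\epsilon)$; the Thom-collapse sends all of $\PP(\tau)$ to the basepoint (the ``point at infinity'') of $\Thom(\tau)$; and by Remark~\ref{remark:F2}, under the homeomorphism $\Thom(\tau)\cong F_2$ of Proposition~\ref{prop:filtration quotient as Thom space} this basepoint is the single point of $F_0$, namely the constant loop at the identity of $SU(2)$. (Equivalently, this is precisely the commutativity of diagram~\eqref{F2diagram}.) Substituting $x_{2r-1}=x_{2r}=[T]$ into the displayed formula and using $\Phi_2([T],[T])=\mathbf{1}$ yields
\[
\Phi_{2r}(x_1,\dots,x_{2r-2},[T],[T]) = \Phi_2(x_1,x_2)\cdots\Phi_2(x_{2r-3},x_{2r-2})\cdot\mathbf{1} = \Phi_{2(r-1)}(x_1,\dots,x_{2r-2}),
\]
the last equality again by associativity. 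The loop on the right lies in $F_{2(r-1)}=\Omega_{\poly,r-1}SU(2)$, and since the map $F_{2(r-1)}\times F_2\to F_{2r}$ of~\eqref{matmult} sends $(h,\mathbf{1})\mapsto h\cdot\mathbf{1}=h$, its image in $F_{2r}$ is exactly the image of $\Phi_{2(r-1)}(x_1,\dots,x_{2r-2})$ under the canonical inclusion $F_{2(r-1)}\hookrightarrow F_{2r}$. This is the asserted commutativity.

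I do not anticipate a serious obstacle; the argument is essentially bookkeeping, with associativity of matrix multiplication allowing the iterated product to be regrouped. The one point requiring care is the identity $\Phi_2([T],[T])=\mathbf{1}$ --- that $\Theta$, the Thom-collapse, and the homeomorphism $\Thom(\tau)\cong F_2$ all fit together so that the diagonal point is carried to the constant loop --- but this has already been recorded in diagram~\eqref{F2diagram} together with Remark~\ref{remark:F2}, so it can be cited directly.
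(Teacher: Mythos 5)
Your proof is correct and follows exactly the route the paper intends: the paper's own proof is the one-line remark ``This follows from the definition of the maps $\Phi_{2r}$ and Remark~\ref{remark:F2},'' and your argument is precisely the detailed unwinding of that — identifying $\Phi_2([T],[T])$ with the constant loop via Lemma~\ref{lemma:PtauplusEpsilon-prodP1s} (diagonal $\leftrightarrow$ $\PP(\tau)$), the Thom collapse, and Remark~\ref{remark:F2}/diagram~\eqref{F2diagram}, then using that multiplication by the constant loop factors through the canonical inclusion $F_{2(r-1)}\hookrightarrow F_{2r}$. Nothing to fix; you have simply made explicit the bookkeeping the paper treats as routine.
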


\begin{proof}
  This follows from the definition of the maps $\Phi_{2r}$ and
  Remark~\ref{remark:F2}. 
\end{proof}

\begin{remark}\label{remark:not G diagram} 
The left vertical map is not a $G$-equivariant map since $([T],[T])$
is not a $G$-fixed point, as in Remark~\ref{rk1}. All other maps
in~\eqref{eq:Tdiagram} are $G$-equivariant. 
\end{remark}

Let $\kappa: F_{2r} \to \Thom(\tau^{2r-1})$ be the
composition of the projection $F_{2r} \to F_{2r}/F_{2r-2}$ with the
$G$-equivariant homeomorphism $F_{2r}/F_{2r-2} \cong
\Thom(\tau^{2r-1}$ discussed in Proposition~\ref{prop:filtration
  quotient as Thom space}. By Lemma~\ref{lemma:induction on r geometric} we may consider the following 
commutative diagram, which provides the framework for all the arguments
in this section. 
\begin{equation} \label{d22}
\begin{diagram} 
0&\rTo&\tilde{K}^*_T\bigl(\Thom(\tau^{2r-1})\bigr)&\rTo^{\kappa^*}&K^*_T(F_{2r}) &
\rTo^{i^*}& K^*_T(F_{2r-2})&\rTo&0\cr
&&&&\dTo^{\Phi_{2r}^*} && \dTo_{\Phi_{2r-2}^*}&&\cr
&&&&K^*_T\bigl((\PP^1)^{2r}\bigr) &\rTo^{i^*}&
K^*_T\bigl((\PP^1)^{2r-2}\bigr)&&\cr
\end{diagram}
\end{equation}
Here it is important that we use $T$-equivariant 
$K$-theory instead of $G$-equivariant $K$-theory, since by
Remark~\ref{remark:not G diagram} the diagram~\eqref{eq:Tdiagram} is
not a diagram of $G$-equivariant maps.

\begin{remark}
  In our computation of $K^*_G(F_{2r})$ below, we occasionally use
  expressions such as $\ch_G(y)$, when the element $y \in
  K^*_T(F_{2r})$ happens to be Weyl-invariant. This is justified by
  the fact that our results in
  \cite{HJS12} show that $K^*_G(F_{2r})$ is isomorphic to
  $K^*_T(F_{2r})^W$; in this situation there is no 
  harm in using $\ch_G(y)$, since $\ch_G(y)$ determines $\ch_T(y)$. 
\end{remark}

In order to describe the image of $\Phi_{2r}^*: K^*_T(F_{2r}) \to K^*_T((\PP^1)^{2r})$ (respectively for $K^*_G$), we 
note first that for any $r>0$ there is a
natural $S_{2r}$-action on $(\PP^1)^{2r}$, commuting with the given
(diagonal) $G$-action on $(\PP^1)^{2r}$, obtained by interchanging the
factors. This geometric action induces an action on
$K^*_T((\PP^1)^{2r})$ (and $K^*_G((\PP^1)^{2r})$). 

\begin{definition}\label{definition:symmetric subring} 
Fix   $r \in \ZZ^+$. Let
$\Bigl(K^*_T\bigl( (\PP^1)^{2r} \bigr) \Bigr)^{S_{2r}}$ 
denote the subring which is invariant
under the $S_{2r}$-action above.  We call this the 
\textbf{symmetric subring of $K^*_T((\PP^1)^{2r})$}. 
We will use similar notation and terminology 
for statements with $K^*_G$ replacing $K^*_T$.
\end{definition} 

Motivated by Definition~\ref{definition:symmetric subring} and following our notation~\eqref{eq:def barsj} in cohomology, 
let $s_j$ 
 denote the element 
\begin{equation}\label{eq:def sj}
s_j(L_1, \ldots, L_{2r}) \in K^*_G((\PP^1)^{2r}) \subseteq
K^*_T((\PP^1)^{2r}),
\end{equation}
where $s_j(L_1, \ldots, L_{2r})$ is the 
$j$-th elementary symmetric polynomial in  $L_1, \ldots,
L_{2r}$. From the  relations $L^2_j = v L_j - 1$ in Theorem~\ref{theorem:P1summary},
 it follows that any element in
$\Bigl(K^*_T\bigl((\PP^1)^{2r}\bigr)\Bigr)^{S_{2r}}$ can be written as 
an $R(T)$-linear
combination of $\{s_0, \ldots, s_{2r}\}$. Thus we 
can also refer to $\Bigl(K^*_T\bigl((\PP^1)^{2r}\bigr)\Bigr)^{S_{2r}}$ as
the ring of \textbf{symmetric
polynomials} in $L_1, \ldots, L_{2r}$.

We can now state the main result of this section, which is that 
the image $\Phi_{2r}^*\bigl(K^*_T(F_{2r})\bigr)$ is
precisely equal to the symmetric subring
$\Bigl(K^*_T\bigl( (\PP^1)^{2r} \bigr) \Bigr)^{S_{2r}}$, in analogy
with the result in cohomology in Section~\ref{sec:computation cohomology}.

\begin{theorem} \label{KTF2r}
Let $G=SU(2)$ and $T \subset SU(2)$ its maximal torus. Let $\Phi_{2r}:
(\PP^1)^{2r} \to F_{2r}$ be the map given in
Definition~\ref{definition:Phi 2r}. Then 
\begin{equation*}
\begin{split} 
K^*_T(F_{2r}) & \cong\Phi_{2r}^*\bigl(K_T^*(F_{2r})\bigr) \\
& = \Big(K^*_T((\PP^1)^{2r})\Big)^{S_{2r}} \\ 
&=\{\mbox{symmetric polynomials in $L_1, \ldots, L_{2r}$
  in~$K^*_T\bigl((\PP^1)^{2r}\bigr)$}\} \\
&= \mbox{the $K^*_T(\pt)$-subalgebra
of $K^*_T\bigl((\PP^1)^{2r}\bigr)$ generated
by $s_1,\ldots,s_{2r}$}\\
&= \mbox{the $K^*_T(\pt)$-submodule
of $K^*_T\bigl((\PP^1)^{2r}\bigr)$ generated
by $s_0,\ldots,s_{2r}$}.\\
\end{split} 
\end{equation*}
\end{theorem}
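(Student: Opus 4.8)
The plan is to compare $\Phi_{2r}^*$ with its cohomological counterpart via the Chern character in order to pin down the image up to a lattice, and then to nail the integral statement by an induction on $r$ built on the Thom‑space exact sequence \eqref{d22}. I would begin by recording the structure of the codomain. By Theorem~\ref{theorem:P1summary}, $K^*_T((\PP^1)^{2r})$ is a free $R(T)$‑module on the squarefree monomials $L^\alpha$, $\alpha\in\{0,1\}^{2r}$, the $S_{2r}$‑action permutes this basis with orbits indexed by $|\alpha|=k\in\{0,\dots,2r\}$, and the orbit sum of the squarefree monomials of degree $k$ is precisely $s_k(L_1,\dots,L_{2r})$. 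Hence the invariant subring is the free $R(T)$‑module on $s_0,\dots,s_{2r}$, and using the relations $L_j^2=vL_j-1$ one checks it equals the $R(T)$‑subalgebra generated by $s_1,\dots,s_{2r}$; this identifies all the descriptions appearing in the statement with each other (and the same verbatim argument works with $R(G)$ in place of $R(T)$, since $s_j$ and $v$ are $W$‑invariant).

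Next I would establish the containment $\im\Phi_{2r}^*\subseteq\bigl(K^*_T((\PP^1)^{2r})\bigr)^{S_{2r}}$. The Chern character $\chT\colon K^*_T((\PP^1)^{2r})\to H^\hp_T((\PP^1)^{2r};\Q)$ is injective (all groups torsion free, cf.\ the proof of Proposition~\ref{proposition:injective in KG}) and intertwines both the $S_{2r}$‑actions and the two pullback maps $\Phi_{2r}^*$. Since the cohomological image is $S_{2r}$‑invariant by Theorem~\ref{HTF2rQ}, for $y\in\im\Phi_{2r}^*$ and $\sigma\in S_{2r}$ one gets $\chT(\sigma y)=\sigma\chT(y)=\chT(y)$, so $\sigma y=y$; thus $\im\Phi_{2r}^*$ lies in the symmetric subring. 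Because $\Phi_{2r}^*$ is injective (Proposition~\ref{proposition:injective in KG}), it remains only to prove surjectivity onto the symmetric subring, which I would carry out by induction on $r$ (the case $r=0$ being trivial, $F_0=\pt$).

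For the inductive step I would use diagram \eqref{d22}. First, a direct geometric computation — restricting $L_{2r-1},L_{2r}$ to the point $[T]\in\PP^1\cong G/T$, where $\gamma$ restricts to $b^{\mp1}$ — shows that on the symmetric subring the bottom map $i^*$ is given by $i^*(s_j)=s'_j+vs'_{j-1}+s'_{j-2}$, the formula in Theorem~\ref{intro:main}; one reads off that $i^*$ restricted to the symmetric subring is surjective onto $\bigl(K^*_T((\PP^1)^{2r-2})\bigr)^{S_{2r-2}}$ with kernel $N$ a free $R(T)$‑module of rank $2$, spanned by two explicit Chebyshev‑type combinations with top terms $s_{2r}$ and $s_{2r-1}$. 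Second, from the top row of \eqref{d22} the map $i^*\colon K^*_T(F_{2r})\to K^*_T(F_{2r-2})$ is surjective, so by commutativity and the inductive hypothesis $i^*(\im\Phi_{2r}^*)=\im\Phi_{2r-2}^*=\bigl(K^*_T((\PP^1)^{2r-2})\bigr)^{S_{2r-2}}$; combining the two facts, $\im\Phi_{2r}^*$ surjects onto $\bigl(K^*_T((\PP^1)^{2r-2})\bigr)^{S_{2r-2}}$ and $\im\Phi_{2r}^*\cap\Ker i^*=\Phi_{2r}^*(\im\kappa^*)$ is contained in $N$. A comparison of the short exact sequence $0\to\Phi_{2r}^*(\im\kappa^*)\to\im\Phi_{2r}^*\to\bigl(K^*_T((\PP^1)^{2r-2})\bigr)^{S_{2r-2}}\to0$ with $0\to N\to\bigl(K^*_T((\PP^1)^{2r})\bigr)^{S_{2r}}\to\bigl(K^*_T((\PP^1)^{2r-2})\bigr)^{S_{2r-2}}\to0$ then reduces the whole theorem to the single claim that $\Phi_{2r}^*\circ\kappa^*$ has image all of $N$.

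This last claim is where the integrality genuinely enters, and it is the main obstacle. Via the equivariant Thom isomorphism (Theorem~\ref{thomisom}), $\tilde K^*_T(\Thom(\tau^{2r-1}))$ is the free $R(T)$‑module on the Thom class $U:=U_{\tau^{2r-1}}$ and $U\cdot\bar{p}^*\gamma$, and by Lemma~\ref{lemma:compute Uzzeta} together with Lemma~\ref{lemma:s1 in image} (which gives $\Phi_{2r}^*(L)\cong L_1\otimes\cdots\otimes L_{2r}=s_{2r}$) the relevant pullbacks can be made explicit. The plan is to identify the composite $(\PP^1)^{2r}\xrightarrow{\Phi_{2r}}F_{2r}\xrightarrow{\kappa}\Thom(\tau^{2r-1})$ geometrically — using the definition of $\Phi_{2r}$ via matrix multiplication and Remark~\ref{remark:F2} — pull the Thom class back along it, and verify that $\Phi_{2r}^*(\kappa^*U)$ and $\Phi_{2r}^*(\kappa^*(U\bar{p}^*\gamma))$ have leading coefficients $\pm1$ with respect to the rank‑two basis of $N$ found above, so that they themselves form an $R(T)$‑basis of $N$. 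Granting this, the two displayed short exact sequences force $\im\Phi_{2r}^*=\bigl(K^*_T((\PP^1)^{2r})\bigr)^{S_{2r}}$, completing the induction; the corresponding $K^*_G$ statement then follows by taking $W$‑invariants, using $K^*_G(F_{2r})\cong K^*_T(F_{2r})^W$ from \cite{HJS12} and the $W$‑invariance of the $s_j$ and of $v$.
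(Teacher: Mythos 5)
Your overall architecture matches the paper's: you identify the codomain via Theorem~\ref{theorem:P1summary}, establish that $\im\Phi_{2r}^*$ lies in the symmetric subring, and then argue by induction on $r$ using diagram~\eqref{d22}, reducing everything to the claim that $\Phi_{2r}^*\circ\kappa^*$ surjects onto $\Ker i^*\cap\bigl(K^*_T((\PP^1)^{2r})\bigr)^{S_{2r}}$. Your argument for the forward containment (the Chern character intertwines $S_{2r}$-actions and the pullbacks, and the cohomological image is symmetric by Theorem~\ref{HTF2rQ}, so injectivity of $\chT$ forces symmetry in $K$-theory) is a legitimate variant of the paper's Lemma~\ref{lemma:inclusion}, which instead argues geometrically from the $G$-homotopy abelianness of $\Omega G$.

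The gap is in the last step, which is exactly the paper's Proposition~\ref{proposition:main technical} and which you acknowledge is ``the main obstacle'' — but you only offer a plan, not a proof. Two problems with that plan. First, your intended tool, Lemma~\ref{lemma:compute Uzzeta}, computes the $K$-theoretic Thom class only when $\zzeta$ is a \emph{line} bundle; here $\tau^{2r-1}$ has rank $2r-1>1$ once $r>1$, so $U_{\tau^{2r-1}}=\lambda\bigl(\gamma^*_{\tau^{2r-1}\oplus\epsilon}\otimes\bar{p}^*(\tau^{2r-1})\bigr)$ is not of the form $1-\beta$, and the pullback cannot be made explicit in the way you suggest. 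Second, the paper deliberately avoids the direct computation of $\Phi_{2r}^*\kappa^*(U)$ and $\Phi_{2r}^*\kappa^*(xU)$ in $K$-theory: instead it compares bases on both sides via the Chern character (the commutative square~\eqref{eq:main diagram basis-free}), computes the change-of-basis matrices $\mathbf M$ (using the Todd class and the identities of Lemmas~\ref{lemma:relation on bar-a}--\ref{lemma:bar-U in terms of bar-x bar-a}), $\mathbf N$ (leading coefficients $p(\bar t)^{2r-1}$, $p(\bar t)^{2r}$), and $\bar Q$, and then reads $\det Q=1$ off the relation $\det\mathbf N\cdot\det\chT(Q)=\det\bar Q\cdot\det\mathbf M$. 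Crucially, the identification $\bar Q=I$ in Lemma~\ref{HimPhi} is itself not obvious: it uses that $\Phi_{2r}$ factors through the $r$-fold loop-multiplication $(\Omega G)^{2r}\to\Omega G$ and that $H_*(\Omega S^3)$ is a polynomial Pontrjagin ring, so $\Phi_{2r\,*}$ is onto in degree $4r-2$. None of this machinery appears in your proposal, and without it (or some substitute for it) the crucial integral surjectivity of $\Phi_{2r}^*\circ\kappa^*$ is unproved.
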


As a first step towards the proof of Theorem~\ref{KTF2r}, we prove
containment in one direction.

\begin{lemma}\label{lemma:inclusion}
The image $\Phi_{2r}^*(K^*_T(F_{2r}))$ is contained in the symmetric
subring $\bigl(K^*_T((\PP^1)^{2r})\bigr)^{S_{2r}}$ of~${K}^*_T\bigl((\PP^1)^{2r}\bigr)$. 
\end{lemma}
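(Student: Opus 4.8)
The statement to prove is that $\Phi_{2r}^*(K^*_T(F_{2r}))$ lands inside the $S_{2r}$-invariant subring of $K^*_T((\PP^1)^{2r})$. The cleanest route is an induction on $r$, using the commutative diagram~\eqref{d22} that compares the filtration $F_{2r-2} \subseteq F_{2r}$ with the coordinate inclusion $(\PP^1)^{2r-2} \hookrightarrow (\PP^1)^{2r}$. First I would record the base case $r=1$: by Lemma~\ref{PhiL} we have $\Phi_2^*(L) \cong L_1 \otimes L_2$, which is manifestly $S_2$-symmetric in $L_1, L_2$; since $K^*_T(F_2)$ is generated over $K^*_T(\pt)$ by $L$ together with the image of the Thom class (and $R(T)$ is fixed by $S_2$ acting on factors), the image is symmetric.

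For the inductive step, assume $\Phi_{2(r-1)}^*(K^*_T(F_{2(r-1)}))$ is symmetric in $L_1, \dots, L_{2r-2}$. Take any $y \in K^*_T(F_{2r})$. Using the short exact sequence
\[
0 \to \tilde{K}^*_T(\Thom(\tau^{2r-1})) \xrightarrow{\kappa^*} K^*_T(F_{2r}) \xrightarrow{i^*} K^*_T(F_{2r-2}) \to 0
\]
from~\eqref{d22}, I would handle two cases. If $y$ comes from the Thom summand, i.e. $y = \kappa^*(z)$ for some $z \in \tilde{K}^*_T(\Thom(\tau^{2r-1}))$, then by the equivariant Thom isomorphism (Theorem~\ref{thomisom}) $z = U_{\tau^{2r-1}} \cdot \bar{p}^*(w)$ for some $w \in K^*_T(\PP^1)$, and I would compute $\Phi_{2r}^*(\kappa^*(z))$ directly: the composite $\Phi_{2r}$ followed by $\kappa$ is, by Definition~\ref{definition:Phi 2r}, essentially the quotient map $(\PP(\tau\oplus\epsilon))^r \to (\Thom(\tau))^r \to \Thom(\tau^{2r-1})$, and the Thom class of a product/smash pulls back to the external product of Thom classes. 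Each factor's Thom class is $1 - \beta_{\tau\oplus\epsilon}$ by Lemma~\ref{lemma:compute Uzzeta}, and $\beta_{\tau\oplus\epsilon}$ is (up to the isomorphism $\Theta$) symmetric-friendly — concretely the pullback to $\PP^1 \times \PP^1$ involves $L_1, L_2$ in the symmetric combination dictated by Lemma~\ref{PhiL}. So this piece is symmetric. For general $y$, write $\Phi_{2r}^*(y)$ and apply $i^*: K^*_T((\PP^1)^{2r}) \to K^*_T((\PP^1)^{2r-2})$; commutativity of~\eqref{d22} gives $i^*\Phi_{2r}^*(y) = \Phi_{2(r-1)}^*(i^*y)$, which is symmetric in $L_1, \dots, L_{2r-2}$ by the inductive hypothesis. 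The difference $\Phi_{2r}^*(y) - (\text{symmetric lift})$ lies in $\Ker i^* = \Phi_{2r}^*(\kappa^*(\tilde{K}^*_T(\Thom(\tau^{2r-1}))))$, which is symmetric by the previous case.

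The one gap this argument must close is that symmetry in the first $2r-2$ variables plus membership in $\ker i^*$ does not a priori give full $S_{2r}$-symmetry — one still has to see that the image is invariant under transpositions involving the last two coordinates. The honest fix, and the step I expect to be the main obstacle, is to exploit the \emph{symmetry of the construction} $\Phi_{2r}$ itself: matrix multiplication $F_{2j} \times F_{2k} \to F_{2(j+k)}$ together with the product structure on $(\PP^1)^{2r} = (\PP^1 \times \PP^1)^r$ means that $\Phi_{2r}$ is built $S_r$-equivariantly from $\Phi_2$, and within each $\PP^1 \times \PP^1$ block the swap is the symmetry already seen in Lemma~\ref{PhiL}; but multiplication in $\Omega G$ is \emph{not} commutative, so permuting arbitrary factors of $(\PP^1)^{2r}$ does not obviously descend to a self-map of $F_{2r}$. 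The resolution is to argue at the level of $K$-theory classes rather than spaces: since $\Phi_{2r}^*(y)$ is expressible in terms of $s_1$ (the image of $c$, Lemma~\ref{lemma:s1 in image}) and the Thom-class contributions, and the relations $L_j^2 = vL_j - 1$ collapse everything to polynomials in $s_1$ with $R(T)$ coefficients, full symmetry follows once one checks the generating classes ($\Phi_{2r}^*(L) = L_1 \cdots L_{2r}$ from Lemma~\ref{lemma:s1 in image} and the symmetric Thom-class images) are symmetric — which they are. I would present the argument in this order: base case via Lemma~\ref{PhiL}; Thom-summand case via Theorem~\ref{thomisom}, Lemma~\ref{lemma:compute Uzzeta}, and the external-product formula for Thom classes; then the inductive combination using~\eqref{d22}; and finally the observation that the resulting generators are symmetric, hence so is the subring they generate over the ($S_{2r}$-fixed) ring $R(T)$.
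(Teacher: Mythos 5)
Your inductive plan has two serious problems, one of which you correctly flag yourself, and the other is a misconception that turns out to be exactly the key the paper exploits.

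First, the gap you identify is real and your ``honest fix'' does not close it. In the inductive step you decompose $\Phi_{2r}^*(y)$ as a symmetric lift of $i^*\Phi_{2r}^*(y)$ plus an element of $\Ker i^*$, and then you assert that this difference lies in $\Phi_{2r}^*\bigl(\kappa^*(\tilde{K}^*_T(\Thom(\tau^{2r-1})))\bigr)$ and is symmetric. But identifying $\Ker i^*$ (in $K^*_T((\PP^1)^{2r})$) with the image of the Thom summand under $\Phi_{2r}^*$, and showing that image is symmetric, is precisely the content of Proposition~\ref{proposition:main technical} --- which is proved \emph{after} and \emph{using} Lemma~\ref{lemma:inclusion}. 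The same circularity appears in your final paragraph: ``everything collapses to polynomials in $s_1$ with $R(T)$ coefficients'' is essentially a restatement of Theorem~\ref{KTF2r}, not an independent input. Your Thom-summand analysis is also shaky on its own terms: $\Thom(\tau^{2r-1})$ is the Thom space of a single line bundle over $\PP^1$, not a smash product of $2r-1$ copies of $\Thom(\tau)$, so the ``external product of Thom classes'' formula does not apply in the way you want without tracing carefully through the product map $F_2^r \to F_{2r}$.

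Second, and more importantly, your remark that ``multiplication in $\Omega G$ is \emph{not} commutative, so permuting arbitrary factors of $(\PP^1)^{2r}$ does not obviously descend to a self-map of $F_{2r}$'' is exactly backwards: $\Omega G$ \emph{is} homotopy-abelian (as is any based loop space, by the Eckmann--Hilton argument comparing loop concatenation with pointwise multiplication), and the standard proof goes through $G$-equivariantly because the conjugation $G$-action is compatible with both multiplications. This is the whole content of the paper's proof. A transposition $\sigma$ acts on $(\PP^1)^{2r}$ by a $G$-map permuting factors, and one checks that the composites $(\PP^1)^{2r} \xrightarrow{\sigma} (\PP^1)^{2r} \xrightarrow{\Phi_{2r}} F_{2r} \hookrightarrow \Omega G$ and $(\PP^1)^{2r} \xrightarrow{\Phi_{2r}} F_{2r} \hookrightarrow \Omega G$ are $G$-homotopic, precisely because $\Omega G$ is $G$-homotopy-abelian. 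Applying $K^*_T$ then gives $\sigma^*\Phi_{2r}^* = \Phi_{2r}^*$, which is the lemma. No filtration argument, no induction, no Thom class computation. Your instinct to ``exploit the symmetry of the construction'' was right, but the correct resolution is the homotopy-commutativity you dismissed rather than the circular algebraic reduction you reached for instead.
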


\begin{proof} 
Let $x \in \Phi^*_{2r}(K^*_T(F_{2r})) \subset K^*_T((\PP^1)^{2r})$. To show $\sigma(x)=x$ for all $\sigma \in S_{2r}$, 
it suffices to consider the case where $\sigma$ is a transposition.
 The map $\sigma$ is induced by a $G$-map
of spaces which permutes factors within $(\PP^1)^{2r}$.
Thus the question reduces to showing that the diagram 
\begin{diagram} 
 (\PP^1)^{2r} & \rTo^{\sigma} &      (\PP^1)^{2r} \\
\dTo^{{\Phi_{2r}}}  &  &  \dTo_{{\Phi_{2r}}} \\
F_{2r} & \rEqualto & F_{2r} \\
\dTo &  & \dTo \\
\Omega G  & \rEqualto & \Omega G
\end{diagram}
$G$-homotopy commutes.
The standard proof that $\Omega G$ is homotopy-abelian goes
through $G$-equivariantly to prove this.
\end{proof} 

The remainder of this section is devoted to proving the reverse
inclusion
\begin{equation}\label{eq:reverse inclusion}
\Bigl(K^*_T\bigl( (\PP^1)^{2r} \bigr) \Bigr)^{S_{2r}}
 \subseteq \Phi_{2r}^*(K^*_T(F_{2r}))
\end{equation}
by an inductive argument. 
The following is straightforward, giving us a sufficient condition for
proving~\eqref{eq:reverse inclusion} inductively. 

\begin{lemma}\label{lemma:sufficient}
Fix   $r \in \ZZ^+ $. Suppose that $K^*_T((\PP^1)^{2r-2})^{S_{2r-2}}$ is contained in 
$\Phi^*_{2r-2}(K^*_T(F_{2r-2}))$. Let $i^*$ be the bottom horizontal arrow in diagram~\eqref{d22}. 
If 
\begin{equation}\label{eq:ker istar cap symmetrics} 
\Ker i^* \cap 
\Bigl(K^*_T\bigl( (\PP^1)^{2r} \bigr) \Bigr)^{S_{2r}}
 \subseteq \Phi_{2r}^*(K^*_T(F_{2r}))
\end{equation}
then 
\[
\Bigl(K^*_T\bigl( (\PP^1)^{2r} \bigr) \Bigr)^{S_{2r}}
 \subseteq \Phi_{2r}^*(K^*_T(F_{2r})).
\]
\end{lemma}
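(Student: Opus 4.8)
The plan is a formal diagram chase through the commutative diagram~\eqref{d22}, exploiting that its top row is short exact (so in particular $i^*\colon K^*_T(F_{2r})\to K^*_T(F_{2r-2})$ is surjective), together with the inductive hypothesis $K^*_T((\PP^1)^{2r-2})^{S_{2r-2}}\subseteq\Phi^*_{2r-2}(K^*_T(F_{2r-2}))$, the standing hypothesis~\eqref{eq:ker istar cap symmetrics}, and Lemma~\ref{lemma:inclusion}. Fix $y\in\bigl(K^*_T((\PP^1)^{2r})\bigr)^{S_{2r}}$; the goal is to produce $\tilde z\in K^*_T(F_{2r})$ with $\Phi_{2r}^*(\tilde z)=y$.

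First I would check that $i^*(y)\in K^*_T((\PP^1)^{2r-2})$ is again symmetric, i.e. $S_{2r-2}$-invariant. This holds because the bottom $i^*$ in~\eqref{d22} is induced by the inclusion $(\PP^1)^{2r-2}\hookrightarrow(\PP^1)^{2r}$ of diagram~\eqref{eq:Tdiagram}, namely $(x_1,\dots,x_{2r-2})\mapsto(x_1,\dots,x_{2r-2},[T],[T])$, which intertwines the $S_{2r-2}$-action permuting the factors of $(\PP^1)^{2r-2}$ with the action of the subgroup $S_{2r-2}\subseteq S_{2r}$ permuting the first $2r-2$ coordinates; hence $i^*$ is $S_{2r-2}$-equivariant and carries the $S_{2r}$-invariant (hence $S_{2r-2}$-invariant) class $y$ to an $S_{2r-2}$-invariant class. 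By the inductive hypothesis, $i^*(y)=\Phi^*_{2r-2}(z)$ for some $z\in K^*_T(F_{2r-2})$, and by surjectivity of $i^*\colon K^*_T(F_{2r})\to K^*_T(F_{2r-2})$ we may lift $z$ to $\tilde z\in K^*_T(F_{2r})$ with $i^*(\tilde z)=z$. Now set $y':=y-\Phi_{2r}^*(\tilde z)$. Commutativity of~\eqref{d22} gives $i^*(\Phi_{2r}^*(\tilde z))=\Phi_{2r-2}^*(i^*(\tilde z))=\Phi_{2r-2}^*(z)=i^*(y)$, so $y'\in\Ker i^*$; moreover $\Phi_{2r}^*(\tilde z)$ is $S_{2r}$-invariant by Lemma~\ref{lemma:inclusion}, so $y'$ is $S_{2r}$-invariant as well. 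Thus $y'\in\Ker i^*\cap\bigl(K^*_T((\PP^1)^{2r})\bigr)^{S_{2r}}$, which by~\eqref{eq:ker istar cap symmetrics} lies in $\Phi_{2r}^*(K^*_T(F_{2r}))$; writing $y'=\Phi_{2r}^*(w)$ yields $y=\Phi_{2r}^*(\tilde z+w)\in\Phi_{2r}^*(K^*_T(F_{2r}))$, as desired.

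I do not expect a genuine obstacle in this lemma itself: the argument is purely formal, and the only points needing any attention are the $S_{2r-2}$-equivariance of the restriction map in the first step (immediate from the explicit form of the inclusion in~\eqref{eq:Tdiagram}), the surjectivity of the top-row $i^*$ (part of the exactness already recorded in~\eqref{d22}), and the appeal to Lemma~\ref{lemma:inclusion} to guarantee that $\Phi_{2r}^*(\tilde z)$ is symmetric. All of the substantive work is deferred to verifying the hypothesis~\eqref{eq:ker istar cap symmetrics}, which is the technical core of this section.
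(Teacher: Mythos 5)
Your proof is correct and is exactly the ``straightforward diagram chase'' the paper alludes to, making explicit the same ingredients the paper flags (the $S_{2r-2}$-equivariance of $i\colon(\PP^1)^{2r-2}\hookrightarrow(\PP^1)^{2r}$, the surjectivity of the top-row $i^*$ from the exact sequence in~\eqref{d22}, the inductive hypothesis, and Lemma~\ref{lemma:inclusion}). No substantive difference from the paper's argument.
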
 

\begin{proof} 
The inclusion map $i: (\PP^1)^{2r-2} \into
(\PP^1)^{2r}$ is $S_{2r-2}$-equivariant, where $S_{2r-2} \subseteq
S_{2r}$ is the subgroup of $S_{2r}$ which only moves the first $2r-2$
factors. Thus the induced map $i^*$ on equivariant $K$-theory
appearing has the property that 
the image under $i^*$ of the symmetric subring of
$K^*_T((\PP^1)^{2r})$ is contained in
 $\Bigl(K^*_T\bigl( (\PP^1)^{2r-2} \bigr) \Bigr)^{S_{2r-2}}$.
The claim now follows from a straightforward diagram chase using 
the inductive hypothesis. 
\end{proof}

In fact, we will prove a stronger result than~\eqref{eq:ker istar cap
  symmetrics}, recorded in
Proposition~\ref{proposition:main technical} below, for the
statement of which we need some notation. 
In particular, 
it will be useful for the remainder of the discussion to choose specific
generators of $\tilde{K}_T(\Thom(\tau^{2r-1}))$ as follows. Consider
the diagram~\eqref{maindiagram} and the corresponding maps $\tilde{p},
\bar{p}, j$, and $k$ for the case $X = \PP^1$ and $\zzeta =
\tau^{2r-1}$. 
Let
$\gamma$ be the canonical line bundle over $\PP^1$.
Define 
\begin{equation} \label{defx}
x:=\bar{p}^*(\gamma)-1 \in K^*_G\bigl(\PP (\tau^{2r-1}\oplus\epsilon)\bigr)
\subset K^*_T\bigl(\PP (\tau^{2r-1}\oplus\epsilon)\bigr).\end{equation}
Let $U := U_{\tau^{2r-1}} = \lambda\bigl(\gamma_{\tau^{2r-1}\oplus\epsilon}^*\otimes\bar{p}^*(\tau^{2r-1})\bigr)$
denote the Thom class of $\tau^{2r-1}$ as in~\eqref{def eqvt Thom
  class}. 
The equivariant Thom isomorphism 
  says in this setting that 
$$\tilde{K}^*_G(F_{2r}/F_{2r-2})=\tilde{K}^*_G\bigl(\Thom(\tau^{2r-1})\bigr)$$
is freely generated as an $R(G)$-module by $U$ and~$xU$. We will use these
generators in our arguments below.

We can now state the main technical proposition of this section. 

\begin{proposition}\label{proposition:main technical}
  The map $\Phi_{2r}^* \circ \kappa^*: \tilde{K}^*_T\bigl(\Thom(\tau^{2r-1})\bigr)
  \to K_T\bigl((\PP^1)^{2r}\bigr)$ induces an isomorphism of
  $\tilde{K}^*_T(\Thom(\tau^{2r-1}))$ onto the subspace 
\[
\Ker i^* \cap 
\Bigl(K^*_T\bigl( (\PP^1)^{2r} \bigr) \Bigr)^{S_{2r}}
\subseteq
K^*_T\bigl((\PP^1)^{2r}\bigr).
\]
Specifically, 
the images of~$U$ and~$xU$ under~$\Phi_{2r}^* \circ \kappa^*$ form an $R(T)$-module basis for
$\Ker i^* \cap \Bigl(K^*_T\bigl( (\PP^1)^{2r} \bigr) \Bigr)^{S_{2r}}$.
In particular, 
$$\Ker i^*\cap K^*_T\bigl((\PP^1)^{2r}\bigr)^{S_{2r}}
 \subset\im \Phi_{2r}^*.$$
\end{proposition}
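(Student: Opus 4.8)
The plan is to verify the claim of Proposition~\ref{proposition:main technical} by computing both sides explicitly as $R(T)$-modules and checking that the map $\Phi_{2r}^* \circ \kappa^*$ carries the distinguished basis $\{U, xU\}$ of $\tilde K_T^*(\Thom(\tau^{2r-1}))$ to a basis of $\Ker i^* \cap (K_T^*((\PP^1)^{2r}))^{S_{2r}}$. First I would record, using the top row of diagram~\eqref{d22}, that $\kappa^*$ identifies $\tilde K_T^*(\Thom(\tau^{2r-1}))$ with $\Ker i^* \subseteq K_T^*(F_{2r})$, so that $\Phi_{2r}^* \circ \kappa^*$ maps into $\Phi_{2r}^*(\Ker i^*) \subseteq \Ker i^* \cap \im \Phi_{2r}^*$ (commutativity of the bottom square of~\eqref{d22} gives the containment in $\Ker i^*$, and Lemma~\ref{lemma:inclusion} gives containment in the symmetric subring). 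Since $\Phi_{2r}^*$ is injective on $K_T^*(F_{2r})$ by Proposition~\ref{proposition:injective in KG}, and $\kappa^*$ is injective, the composite $\Phi_{2r}^* \circ \kappa^*$ is injective; so it remains only to check surjectivity onto $\Ker i^* \cap (K_T^*((\PP^1)^{2r}))^{S_{2r}}$, for which a rank count over $R(T)$ suffices once the images of $U$ and $xU$ are computed.

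Next I would carry out the explicit computation of $\Phi_{2r}^*(\kappa^*(U))$ and $\Phi_{2r}^*(\kappa^*(xU))$ in terms of the generators $L_1, \dots, L_{2r}$ of $K_T^*((\PP^1)^{2r})$. Here I would use Lemma~\ref{lemma:compute Uzzeta}-style computations: by Corollary~\ref{cor:Cherntau} and Remark~\ref{rem:Milnor} one has $\tau \cong_G (\gamma^*)^2$ and $\tau \oplus \epsilon \cong v \otimes \gamma^*$, and the Thom class $U$ of $\tau^{2r-1}$ unwinds via~\eqref{def eqvt Thom class} as an exterior power $\lambda(\gamma_{\tau^{2r-1}\oplus\epsilon}^* \otimes \bar p^*(\tau^{2r-1}))$; pulling back along $\Theta^{-1}$ and the matrix-multiplication map, and using Lemma~\ref{lemma:s1 in image} (which computes $\Phi_{2r}^*(L) = L_1 \otimes \cdots \otimes L_{2r}$), should express the image of $U$ as a symmetric polynomial in the $L_j$ lying in the kernel of $i^*$, and similarly $x = \bar p^*(\gamma) - 1$ contributes an extra factor. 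I would then compare with the explicit formula for $i^*(s_j)$ stated in Theorem~\ref{intro:main} (equation~\eqref{eq:iota star of s_j}): an element $\sum_j c_j s_j$ lies in $\Ker i^*$ precisely when the $c_j$ satisfy the linear recursion dual to that formula, and one checks this system has a rank-$2$ solution space over $R(T)$, spanned by two explicit symmetric polynomials, which match (up to the obvious change of basis) the images of $U$ and $xU$.

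The main obstacle I expect is the bookkeeping in the second step: tracking the Thom class through the composite $\Phi_{2r} = (\text{matrix mult}) \circ (\text{Thom identifications}) \circ \Theta^{-r}$ and expressing everything consistently in the $L_j$-presentation of Theorem~\ref{theorem:P1summary}, while being careful that this is genuinely a $T$-equivariant (not $G$-equivariant) computation because of Remark~\ref{remark:not G diagram}. A cleaner route, which I would try first, is to avoid computing the image of $U$ on the nose and instead argue purely by ranks: by the Thom isomorphism $\tilde K_T^*(\Thom(\tau^{2r-1}))$ is free of rank $2$ over $R(T)$ with basis $\{U, xU\}$; by injectivity the image in $K_T^*((\PP^1)^{2r})$ is also free of rank $2$; and on the other hand, using the explicit system maps $i^*(s_j)$ one computes directly that $\Ker i^* \cap (K_T^*((\PP^1)^{2r}))^{S_{2r}}$ is free of rank $2$ over $R(T)$. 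Since the image of $\Phi_{2r}^* \circ \kappa^*$ is a rank-$2$ free $R(T)$-submodule of the rank-$2$ free $R(T)$-module $\Ker i^* \cap (\cdots)^{S_{2r}}$, one must still rule out a proper finite-index (or non-saturated) inclusion; here I would invoke the corresponding cohomology statement from Theorem~\ref{HTF2rQ} via the injective Chern-character square in Proposition~\ref{proposition:injective in KG}, where the analogous inclusion is already known to be an equality, to conclude that the $K$-theoretic inclusion is also an equality. This yields surjectivity, hence the isomorphism, and in particular the final displayed containment $\Ker i^* \cap K_T^*((\PP^1)^{2r})^{S_{2r}} \subseteq \im \Phi_{2r}^*$.
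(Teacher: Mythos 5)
Your setup is correct and matches the paper's framework: using the top row of diagram~\eqref{d22}, $\kappa^*$ identifies $\tilde K^*_T(\Thom(\tau^{2r-1}))$ with $\Ker i^* \subseteq K_T^*(F_{2r})$, injectivity of $\Phi_{2r}^*$ gives injectivity of the composite, Lemma~\ref{lemma:inclusion} plus commutativity of diagram~\eqref{d22} puts the image inside $\Ker i^* \cap (K_T^*((\PP^1)^{2r}))^{S_{2r}}$, and the explicit matrix~\eqref{k:matrix} for $i^*(s_j)$ shows the target is free of rank $2$ over $R(T)$. All of that is exactly what the paper does in Sections 5.1--5.2.

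The gap is in the last step of your ``cleaner route.'' Having two free rank-$2$ $R(T)$-submodules $A \subseteq B$ of $K_T^*((\PP^1)^{2r})$, you want to rule out a proper inclusion, and you claim this follows from the injective Chern character square together with the cohomology equality $\bar A = \bar B$ (Theorem~\ref{HTF2rQ}). This diagram chase does not go through: given $b \in B$, we only know $\ch_T(b) \in \bar B = \bar A$, but elements of $\bar A$ need not lie in $\ch_T(A)$ (the Chern character is injective, not surjective onto the $\Q$-coefficient cohomology). Concretely, write $A = [b_1, b_2]\cdot Q$ with $Q \in M_2(R(T))$; equality $A = B$ is equivalent to $\det Q \in R(T)^\times$. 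The Chern character square only gives the relation $\mathbf{N}\,\ch_T(Q) = \bar Q\,\mathbf{M}$ from~\eqref{matrixeqn}, so
\[
\ch_T(\det Q) = \det\bar Q \cdot \frac{\det\mathbf{M}}{\det\mathbf{N}},
\]
and one cannot conclude $\det Q$ is a unit from $\det\bar Q = 1$ alone: one needs the nontrivial identity $\det\mathbf{M} = \det\mathbf{N}$, which is the content of the computations~\eqref{eq:det M} and~\eqref{eq:det N} (yielding $(p(\bar t))^{4r-1}$ on both sides). The paper's proof devotes Sections 5.3--5.4 to exactly this: the explicit formulas for $\mathbf{M}$ (via $g_1, g_2$ and the Todd-class comparison $\bar U = -\bar a^{2r-1}$), for $\mathbf{N}$ (via $\ch_T(s_k) = p(\bar t)^k \bar s_k + \cdots$), and the separate homological argument (Lemma~\ref{HimPhi}) showing $\bar Q$ is the identity. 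None of these are automatic consequences of Theorem~\ref{HTF2rQ}, so your argument as stated is missing the core computation. Your first proposed route (compute $\Phi_{2r}^*\kappa^*(U)$ and $\Phi_{2r}^*\kappa^*(xU)$ explicitly in the $L_j$) could in principle work, but you correctly flagged it as unwieldy; the paper deliberately avoids it by working in cohomology and comparing determinants.
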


The proof of Proposition~\ref{proposition:main technical} is both long
and technical, and occupies Sections~\ref{subsec:construct bases}
to~\ref{subsection:finish proof}. Thus,
before embarking on the details, we briefly sketch the main ideas of
the proof. 
Our strategy is to relate the
map $\Phi_{2r}^*\circ \kappa^*$ to its analogue in $H^{\pi}_T$ via the homomorphism
$\chT$. This method allows us to take advantage of the presence of a
$\Z$-grading in cohomology. 
The
following
commutative diagram 
\begin{equation}\label{eq:main diagram basis-free}
\xymatrix @C=2.5pc {
\tilde{K}^*_T\bigl(\Thom(\tau^{2r-1})\bigr) \ar[rrr]^{\Phi_{2r}^* \circ \kappa^*}
\ar[d]^{\chT}  & & &
\Ker i^* \cap \Bigl({\tilde{K}}^*_T\bigl((\PP^1)^{2r}\bigr)\Bigr)^{S_{2r}} 
\ar[d]^{\chT} \\
\tilde{H}^{\pi}_T(\Thom(\tau^{2r-1});\QQ)\ar[rrr]^{{\Phi_{2r}}^* \circ \kappa^*}
&&& \Ker i^* \cap\Bigl({\tilde{H}}^{\pi}_T\bigl((\PP^1)^{2r};\QQ
\bigr)\Bigr)^{S_{2r}}. \\
}\end{equation}
will be central in our analysis. Note that the diagram is well-defined
since, by naturality, $\chT$ takes $S_{2r}$-invariant elements to
$S_{2r}$-invariant elements, and also takes $\Ker i^*$ to $\Ker i^*$. 
Moreover, the cohomology version of~\eqref{d22} shows that the image
of $\Phi_{2r}^* \kappa^*$ on
$\tilde{H}^{\pi}_T(\Thom(\tau^{2r-1});\Q)$ lie in $\Ker i^*$, and
Theorem~\ref{HTF2rQ} shows that they are symmetric. 
Also note that the top horizontal arrow in~\eqref{eq:main diagram basis-free} is a
morphism of $R(T)$-modules, while the bottom horizontal arrow is a
morphism of $H^{\pi}_T(\pt)$-modules. The vertical arrows
satisfy the relation 
$$ \chT(\rho m)= \chT(\rho) \chT(m)$$
for all $\rho \in R(T)$ and any $m$ in the domain. 
Our goal, stated in terms of~\eqref{eq:main diagram basis-free}, is to prove that the top horizontal arrow is an
isomorphism.

We will accomplish this goal by concrete linear algebra. Recall that
$\tilde{K}^*_T\bigl(\Thom(\tau^{2r-1})\bigr)$ is a free $R(T)$-module of rank
$2$, where we have fixed a choice of basis $\{U, xU\}$. Letting
$\bar{U}$ denote the (equivariant) cohomology Thom class of
$\tau^{2r-1}$ and 
\begin{equation} \label{defbarx}\bar{x}:=c_1^G(x) = c_1^G\bigl(\bar{p}^*(\gamma)\bigr)
\in H^2_G\bigl(\PP (\tau^{2r-1}\oplus\epsilon)\bigr)
\subset H^2_T\bigl(\PP (\tau^{2r-1}\oplus\epsilon)\bigr)
\end{equation}
it is also clear from the 
(equivariant) cohomology Thom isomorphism that $\{\bar{U},
\bar{x}\bar{U}\}$ form a basis for
$\tilde{H}_T^{\pi}(\Thom(\tau^{2r-1});\Q)$ as a free
$\tilde{H}_T^{\pi}(\pt;\Q)$-module. We show in
Section~\ref{subsec:construct bases} that both $\Ker i^* \cap
(K^*_T\bigl((\PP^1)^{2r})\bigr)^{S_{2r}}$ and $\Ker i^* \cap
\bigl(H^{\pi}_T((\PP^1)^{2r};\Q)\bigr)^{S_{2r}}$ are also free rank-$2$ modules over
$R(T)$ and $H^{\pi}_T(\pt;\Q)$ respectively, and find explicit bases
$\{K_1, K_2\}$ and $\{\bar{K}_1, \bar{K}_2\}$ respectively.

Given these choices of bases, we can construct a $ 2 \times
2$ matrix determining any of the four maps in the
diagram~\eqref{eq:main diagram basis-free}. For example, for the right
vertical arrow, we may write 
\begin{equation}\label{eq:def N}
\chT(K_1) = n_{11} \bar{K}_1 + n_{21} \bar{K}_2, 
\quad
\chT(K_2) = n_{12} \bar{K}_1 + n_{22} \bar{K}_2
\end{equation}
where $n_{ij} \in H^{\pi}_T(\pt;\QQ)$. This defines a $2 \times 2$
matrix $\mathbf{N} = (n_{ij})$. Similarly we may define matrices
$\mathbf{M}$ corresponding to the left vertical arrow, $Q$ for the top
horizontal arrow, and $\bar{Q}$ for the bottom horizontal arrow. Note
that the entries of $\mathbf{N}, \mathbf{M}, \bar{Q}$ are all in
$H^{\pi}_T(\pt;\QQ)$, whereas the entries of $Q$ are in $R(T)$. 
We record these definitions schematically in the following diagram. 
\begin{equation}\label{eq:main diagram with basis} 
\xymatrix{
\langle U, xU \rangle \ar[r]^Q \ar[d]^{\bf M} & \langle K_1, K_2 \rangle \ar[d]^{{\bf N} } \\
\langle \bar{U}, \bar{x}\bar{U} \rangle \ar[r]^{\bar{Q}} & \langle \bar{K}_1, \bar{K}_2 \rangle 
}
\end{equation}
The commutativity of~\eqref{eq:main diagram basis-free} implies that
these matrices satisfy
\begin{equation}\label{matrixeqn}
{\bf N}\, \ch_T(Q) = \bar{Q}\, {\bf M}
\end{equation}
where the notation $\chT(Q)$ denotes the $2 \times 2$ matrix obtained
by applying $\chT$ to each entry of~$Q$.

With this notation in place it is immediate that the following is sufficient to
prove Proposition~\ref{proposition:main technical}: 
\begin{equation}\label{eq:Q has det 1}
\begin{minipage}{0.8\linewidth}
the matrix $Q$ in~\eqref{eq:main diagram with basis} has determinant
$1$ (and is hence invertible). 
\end{minipage}
\end{equation}
We will prove the claim in~\eqref{eq:Q has det 1} via the indirect
route of computing $\bf{M}, \bf{N}$, and $\bar{Q}$, and then using the
relation~\eqref{matrixeqn} to deduce that the determinant of $\chT(Q)$
is $1$. This implies $\det(Q)=1$ since $\chT$ is injective. More
specifically, in Section~\ref{subsection:M and N} we explicitly
compute both $\bf{M}$ and $\bf{N}$.
We compute $\bar{Q}$ and some determinants to finish the proof of
Proposition~\ref{proposition:main technical} (and hence
Theorem~\ref{KTF2r}) in Section~\ref{subsection:finish proof}.

\subsection{Module bases for $\Ker i^* \cap
(K^*_T((\PP^1)^{2r}))^{S_{2r}}$ and $\Ker i^* \cap
(H^{\pi}_T((\PP^1)^{2r};\Q))^{S_{2r}}$}\label{subsec:construct bases}

As  above, we must prove that both 
$\Ker i^* \cap
(K^*_T((\PP^1)^{2r}))^{S_{2r}}$ and $\Ker i^* \cap
(H^{\pi}_T((\PP^1)^{2r};\Q))^{S_{2r}}$ are free rank-$2$ modules over
appropriate rings, and then to fix particular choices of module bases
for each. (In fact, we will not specify the bases completely, since
for our later arguments only the `highest-order terms' are needed.)

We
begin with an explicit description of the map $i^*$ in terms of the
presentations of $K^*_T((\PP^1)^{2r})$ and $K^*_T((\PP^1)^{2r-2})$
given in Theorem~\ref{theorem:P1summary}.
Let $\{L_1, \ldots, L_{2r}\}$ denote the generators of
$K^*_T((\PP^1)^{2r})$ as before, and let $\{L_1',\ldots, L_{2r-2}' \}$
denote the generators
of~$K^*_T\bigl((\PP^1)^{2r-2}\bigr)$. With respect to these variables 
the map 
$i^*: K^*_T\bigl((\PP^1)^{2r}\bigr)\to K^*_T\bigl((\PP^1)^{2r-2}\bigr)$
is given by
\begin{equation}\label{eq:iota star of L_j}
i^* (L_j) = \begin{cases} L'_j & \mbox{ if $j \le 2r-2$};\cr
b^{-1} & \mbox{if $j = 2r-1$}; \cr
b & \mbox{if $j = 2r$.} \cr
\end{cases} 
\end{equation}
Also note that the quadratic relations $L_j^2 = vL_j-1$
in Theorem~\ref{theorem:P1summary} imply that any symmetric polynomial
in the $L_j$ 
in $K^*_T\bigl((\PP^1)^{2r}\bigr)$ can be
expressed as an $R(T)$-linear combination of $s_0, \ldots, s_{2r}$.
Since we are interested in the kernel of $i^*$ restricted to the
symmetric polynomials, it is useful to compute $i^*$ on the $s_k$. 
Let $s'_0, \ldots, s'_{2r-2}$ denote the analogous elements in
$K^*_T((\PP^1)^{2r-2})$. Using the expression 
\begin{align*}
s_j(L_1,\ldots, L_{2r})
&=s_{j-2}(L_1,\ldots,L_{2r-2})L_{2r-1}L_{2r}
+s_{j-1}(L_1,\ldots,L_{2r-2})L_{2r-1}\cr
&\phantom{=}+s_{j-1}(L_1,\ldots,L_{2r-2})L_{2r}
+s_{j}(L_1,\ldots,L_{2r-2})
\end{align*}
it follows from a straightforward computation that 
\begin{equation}\label{eq:iota star of s_j}
i^* (s_j) = \begin{cases}
s'_0 &\mbox{if $j=0$};\cr
s'_1 +v s'_{0} &\mbox{if $j=1$};\cr
 s'_j +v s'_{j-1} + s'_{j-2}
& \mbox{if $1 < j \le 2r-2$};\cr
  v s'_{2r-2} + s'_{2r-3}
& \mbox{if $ j = 2r-1$};\cr       s'_{2r-2}
& \mbox{if $    j =2r$.}\cr
\end{cases}
\end{equation}

 The corresponding matrix is 
 \begin{equation} \label{k:matrix} 
 \begin{pmatrix}
 1 & v & 1 & \ldots & 0 & 0 & 0 & 0 & 0\cr
 0 & 1 & v & \ldots & 0 & 0 & 0 & 0 & 0\cr
 0 & 0 & 1 & \ldots & 0 & 0 & 0 & 0 & 0\cr
 \vdots & \vdots & \vdots & \ddots & \vdots & \vdots & \vdots & \vdots & \vdots
 \cr
 0 & 0 & 0 & \ldots & 1 & v & 1 & 0 & 0\cr
 0 & 0 & 0 & \ldots & 0 & 1 & v & 1 & 0\cr
 0 & 0 & 0 & \ldots & 0 & 0 & 1 & v & 1\cr
 \end{pmatrix}
 \end{equation}

From~\eqref{eq:iota star of s_j} and~\eqref{k:matrix} it follows that $\Ker(i^*) \cap \left( K^*_T((\PP^1)^{2r})\right)^{S_{2r}}$ is 
a free rank-$2$-module and that there exists a basis
$K_1, K_2$ of the form 
\begin{align*}  \label{kernel1} 
K_1 &= s_{2r-1} + \mbox{lower-order terms in $s_0, \ldots, s_{2r-2}$} \cr
\noalign {and} \cr
K_2 &= -s_{2r}+ \mbox{lower-order terms in $s_0, \ldots, s_{2r-2}$}. \cr
\end{align*}
(Only the leading terms $s_{2r-1}$ and $s_{2r}$ of $K_1$ and $K_2$ are
of concern to us, so we do not record details about the lower-order
terms.)

We now make analogous computations for 
$i^*: H^{\pi}_T\bigl((\PP^1)^{2r};\Q\bigr)\to
H^{\pi}_T\bigl((\PP^1)^{2r-2};\Q\bigr)$.
Noting that $v=b+b^{-1}$ and that $\bar{b}(-\bar{b})=-\bar{t}$,
we have the following equations for $i^*(\bar{L}_j)$ analogous to~\eqref{eq:iota star of L_j}
with $b$ and $b^{-1}$ replaced by $\bar{b}$ and~$-\bar{b}$ respectively.
\begin{equation}\label{eq:iota star of bars_j}
i^* (\bar{s}_j) = \begin{cases}
\bar{s}'_0 &\mbox{if $j=0$};\cr
\bar{s}'_1 &\mbox{if $j=1$};\cr
\bar{s}'_j -\bar{t}  \bar{s}'_{j-2}
& \mbox{if $1 < j \le 2r-2$};\cr
   -\bar{t} \bar{s}'_{2r-3} & \mbox{if $ j = 2r-1$};\cr       
- \bar{t}\bar{s}'_{2r-2} & \mbox{if $    j =2r$.}\cr
\end{cases}
\end{equation}
The corresponding matrix for $i^*$ is
\begin{equation} \label{h:matrix} \begin{pmatrix}
1 & 0 & -\bar{t} & \ldots & 0 & 0 & 0 & 0 & 0\cr
0 & 1 & 0 & \ldots & 0 & 0 & 0 & 0 & 0\cr
0 & 0 & 1 & \ldots & 0 & 0 & 0 & 0 & 0\cr
\vdots & \vdots & \vdots & \ddots & \vdots & \vdots & \vdots & \vdots & \vdots
\cr
0 & 0 & 0 & \ldots & 1 & 0 & -\bar{t} & 0 & 0\cr
0 & 0 & 0 & \ldots & 0 & 1 & 0 & -\bar{t} & 0\cr
0 & 0 & 0 & \ldots & 0 & 0 & 1 & 0 & -\bar{t}\cr
\end{pmatrix}
\end{equation}
Again it follows that 
$\Ker(i^*) \cap \left( H^{\pi}_T((\PP^1)^{2r}; \Q)\right)^{S_{2r}}$ is a free rank-$2$ module and 
that there exists a basis of the form 
\begin{align*}
\bar{K}_1 &= \bar{s}_{2r-1}  + \mbox{lower-order terms in $\bar{s}_0,\ldots, 
\bar{s}_{2r-2}$}\cr
 \noalign {and} \cr
\bar{K}_2 &=-\bar{s}_{2r}+ \mbox{lower-order terms in $\bar{s}_0,\ldots, \bar{s}_{2r-1}$}                           \cr
\end{align*}
where both $\bar{K}_1$ and $\bar{K}_2$ are homogeneous.

\subsection{Computation of the matrices $\bf{N}$ and
  $\bf{M}$}\label{subsection:M and N}

We next turn to a computation of the matrix $\bf{N}$ in~\eqref{eq:main
diagram with basis}, the entries of which are determined
by~\eqref{eq:def N}. Hence our task is to compute $\chT(K_1)$ and
$\chT(K_2)$ in terms of $\{\bar{K}_1, \bar{K}_2\}$, with respect to
the bases chosen in Section~\ref{subsec:construct bases}. Since the
$K_i$ are written in terms of the $s_k$, we first compute
$\chT(s_k)$.

Note that in general if two variables $t$ and $y$ satisfy the relation
$y^2=t$, then the formal series $e^y = \sum_k \frac{y^k}{k!}$ can be expressed as 
\begin{equation}\label{eq:ey with p and q}
e^{y}=yp(t)+q(t)
\end{equation}
where 
\begin{equation} \label{defpt}
p(t):= \sum_{k=0}^\infty \frac{t^k}{(2k+1)!} =
\sinh(\sqrt{t})/\sqrt{t} 
\end{equation}
and 
\begin{equation} \label{defqt}
q(t):= \sum_{k=0}^\infty \frac{t^k}{(2k)!} = \cosh(\sqrt{t}).
\end{equation}
In our setting, recall that $\bar{L}_k$ satisfies
$\bar{L}_k^2=\bar{t}$. Using~\eqref{eq:ey with p and q} we obtain, by
definition of the Chern character, 
$$\chT(L_k)=e^{\bar{L}_k}=
\bar{L}_k p(\bar{t})+q(\bar{t})$$
so $\chT(L_k)$ is an expression in $H^{\pi}((\PP^1)^{2r};\Q)$ which is
linear in $\bar{L}_k$ with coefficients in $H^{\pi}_T(\pt;\Q)$. 
Therefore we conclude 
\begin{align*}
\chT(s_k)&=s_k\bigl(\chT(L_1),\ldots,\chT(L_{2r-2})\bigr)\cr
&=s_k\bigl(\bar{L}_1p(\bar{t})+q(\bar{t}),\ldots,
\bar{L}_{2r}p(\bar{t})+q(\bar{t})\bigr)\cr
&=p(\bar{t})^k\bar{s}_k+\mbox{lower-order terms in $\bar{s}_0,\ldots, \bar{s}_{k-1}$}.\cr
\end{align*}
It then follows that
\begin{align*}
\chT(K_1)&=p(\bar{t})^{2r-1}\bar{s}_{2r-1}
+\mbox{lower-order terms in $\bar{s}_0,\ldots, \bar{s}_{2r-2}$}\cr
\noalign{\rm and}
\chT(K_2)&=-p(\bar{t})^{2r}\bar{s}_{2r}
+\mbox{lower-order terms in $\bar{s}_0,\ldots, \bar{s}_{2r-1}$}.
\end{align*}
Comparing the coefficients of $\bar{s}_{2r-1}$ and~$\bar{s}_{2r}$
in $\chT(K_1)$ and $\chT(K_2)$ with those in $\bar{K}_1$
and~$\bar{K}_2$, 
we conclude that 
\[
\chT(K_1)=p(\bar{t})^{2r-1}\bar{K_1}
\]
and
\[
\chT(K_2)=p(\bar{t})^{2r}\bar{K_2} + A(\bar{t})\bar{K}_1
\]
for some $A(\bar{t})\in H_T^{\pi}(\pt;\Q)$.
Hence we have 
\begin{equation} \label{ndef}  { \bf N} = \begin{pmatrix} 
\bigl ( p(\bar{t})\bigr ) ^{2r-1} &   A(\bar{t}) \cr 
0 & \bigl ( p(\bar{t}) \bigr ) ^{2r } \cr \end{pmatrix}.
\end{equation}

We now turn to a computation of $\bf{M}$, for which we must first
compute the Chern character of $U$ and $xU$. We will in fact compute
$\chG(U)$ and $\chG(xU)$, from which we can deduce $\chT(U)$ and
$\chT(xU)$. For this computation it is useful to introduce the
bundle 
\[
a:=\gamma^*_{\tau^{2r-1}\oplus\epsilon}\otimes\bar{p}^*(\tau)
\]
on $\Thom(\tau^{2r-1})$. Then it follows from~\eqref{def eqvt Thom
  class}
that $U=\lambda(a^{2r-1})$. Letting $\bar{a}$ denote the first Chern
class $c_1(a)$, it also follows from the definition of
$a$, Corollary~\ref{cor:Cherntau},
and the definition~\eqref{defbarx} of $\bar{x}$ 
that 
\begin{equation}\label{eq:c1 relation a and barx}
c_1^G(\gamma_{\tau^{2r-1}\oplus \epsilon}) = - \bar{a} - 2 \bar{x}.
\end{equation}

We are now in a position to compute $\chG(U)$ in terms of $\bar{a}$. 
Recall that the definition of the Chern character implies that 
if $\xi=\xi_1+\ldots+ \xi_n$ is a sum of equivariant line bundles then
$$\ch_G\bigl(\lambda(\xi)\bigr)=
\sum_{k=0}^n (-1)^{k}s_k(e^{\overline{\xi_1}},\ldots,e^{\overline{\xi_n}})$$
where $\overline{\xi_j} := c_1^G(\xi_j). $
Applying this to $U = a^{2r-1}$ yields 
\begin{align*}
\chG(U)&=\sum_{k=0}^{2r-1} (-1)^ks_k(e^{\bar{a}},e^{\bar{a}},\ldots,e^{\bar{a}})\cr
&=1-(2r-1)e^{\bar{a}}+{2r-1\choose 2}e^{2\bar{a}}+\ldots=(1-e^{\bar{a}})^{2r-1}\cr
&=(1-e^{\bar{a}})^{2r-1}.\cr
\end{align*}

In order to compute the matrix ${\bf M}$ with respect to the chosen
bases $\{U, xU\}$ and $\{\bar{U}, \bar{x}\bar{U}\}$, we must now
relate $\bar{a}$ to $\bar{U}$ and $\bar{x}\bar{U}$. The next two
lemmas serve this purpose.

\begin{lemma}\label{lemma:relation on bar-a}
$\bar{a}^{2r}+2\bar{x} \bar{a}^{2r-1} = 0.$
\end{lemma}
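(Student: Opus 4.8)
Proof plan for Lemma 6.13 ($\bar a^{2r} + 2\bar x\,\bar a^{2r-1} = 0$):

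The plan is to work inside $H^\pi_T\bigl(\PP(\tau^{2r-1}\oplus\epsilon);\Q\bigr)$ and use the Bott-periodicity/projective-bundle relation together with the Chern-class identity already recorded in~\eqref{eq:c1 relation a and barx}. First I would recall that for the projective bundle $\PP(\tau^{2r-1}\oplus\epsilon)\to\PP^1$, the equivariant cohomology (with $\Q$-coefficients) is generated over $H^*_G(\PP^1;\Q)$ by the first Chern class $c_1^G(\gamma_{\tau^{2r-1}\oplus\epsilon})$ subject to the single relation coming from $\bar p^*(\tau^{2r-1}\oplus\epsilon)\cong \gamma_{\tau^{2r-1}\oplus\epsilon}\oplus\beta_{\tau^{2r-1}\oplus\epsilon}$ — concretely, the top Chern class of $\bar p^*(\tau^{2r-1}\oplus\epsilon)/\gamma_{\tau^{2r-1}\oplus\epsilon}$ (a rank-$2r-1$ bundle) must vanish, giving a degree-$2r$ relation. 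Write $c := c_1^G(\gamma_{\tau^{2r-1}\oplus\epsilon})$ and $t := c_1^G(\tau)$; then the defining relation is $\prod$ over the "Chern roots of $\bar p^*\tau^{2r-1}$ minus $c$", which since $\tau^{2r-1}$ has Chern root $t$ (as $\tau$ is a line bundle and we take its $(2r-1)$st power, so $c_1^G(\tau^{2r-1}) = (2r-1)t$ — wait, more carefully $\tau^{2r-1}$ should be read as $\tau^{\oplus(2r-1)}$, a rank-$2r-1$ bundle with all Chern roots equal to $t$) becomes $(t-c)^{2r-1} = 0$.

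Next I would substitute. From~\eqref{eq:c1 relation a and barx}, $c = -\bar a - 2\bar x$, and from the definition $a = \gamma^*_{\tau^{2r-1}\oplus\epsilon}\otimes\bar p^*(\tau)$ we get $\bar a = c_1^G(a) = -c + t$, i.e. $t - c = \bar a$. Hence the relation $(t-c)^{2r-1} = 0$ reads $\bar a^{2r-1} = 0$?? — that cannot be right, so the relation must instead be the one for $\PP(\tau^{2r-1}\oplus\epsilon)$ as a $2r$-dimensional projective bundle, namely $c_1^G(\gamma) \cdot \prod_{\text{roots }\rho\text{ of }\tau^{2r-1}}(c_1^G(\gamma) - \rho) = 0$, i.e. $c\cdot(c - t)^{2r-1} = 0$. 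Substituting $c - t = -\bar a$ and $c = -\bar a - 2\bar x$ gives $(-\bar a - 2\bar x)(-\bar a)^{2r-1} = 0$, i.e. $(-1)^{2r}(\bar a + 2\bar x)\bar a^{2r-1} = 0$, which is exactly $\bar a^{2r} + 2\bar x\,\bar a^{2r-1} = 0$. So the proof is: identify the projective-bundle relation, rewrite it via~\eqref{eq:c1 relation a and barx} and the definition of $\bar a$, and simplify.

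The main obstacle — and the step I would be most careful about — is getting the projective-bundle relation exactly right: namely confirming that $\tau^{2r-1}$ denotes the $(2r-1)$-fold Whitney sum $\tau^{\oplus(2r-1)}$ (not tensor power), so that it has total rank $2r-1$ with all $G$-equivariant Chern roots equal to $c_1^G(\tau) = \bar t$ (via Corollary~\ref{cor:Cherntau}), and that $\PP(\tau^{2r-1}\oplus\epsilon)$ therefore has fiber $\PP^{2r-1}$, giving a degree-$2r$ relation $c_1^G(\gamma_{\tau^{2r-1}\oplus\epsilon})\prod_{i=1}^{2r-1}\bigl(c_1^G(\gamma_{\tau^{2r-1}\oplus\epsilon}) - \bar t_i\bigr) = 0$ with all $\bar t_i = \bar t$. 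One then has to check the sign/orientation convention relating $c_1^G(\gamma)$ to $\bar x$ and $\bar a$ consistently with~\eqref{eq:c1 relation a and barx}; once the conventions are pinned down the algebra is a one-line substitution. An alternative route, if the bundle conventions prove fiddly, is to pull everything back to $\PP^1\times\PP^1$ via $\Theta^{-1}$ (for $r=1$) or to argue by the splitting principle, expressing $\bar a$ and $\bar x$ in terms of the pullbacks of $c_1(\gamma)$ from the factors and verifying the identity there, then invoking naturality; but the direct projective-bundle computation should be cleanest.
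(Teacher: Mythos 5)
Your argument is correct, and it rests on the same underlying vanishing fact as the paper's, but you package it differently. The paper uses the defining isomorphism $\bar{p}^*(\tau^{2r-1})\oplus\epsilon\cong\gamma_{\tau^{2r-1}\oplus\epsilon}\oplus\beta_{\tau^{2r-1}\oplus\epsilon}$ and the Whitney sum formula to write the total Chern class $c^G(\beta)$ as a quotient, then extracts a recursion on graded pieces and uses $c_{2r}^G(\beta)=0$ (since $\mathrm{rank}\,\beta=2r-1$) together with a direct binomial computation showing $c^G_{2r-1}(\beta)=\bar a^{2r-1}$. You instead quote the projective-bundle relation $\prod_i\bigl(\rho_i-c_1^G(\gamma_{\tau^{2r-1}\oplus\epsilon})\bigr)=0$ over the Chern roots $\rho_i$ of $\tau^{2r-1}\oplus\epsilon$, which is exactly the statement that the degree-$2r$ Chern class of the rank-$(2r-1)$ complementary bundle vanishes; so the two arguments are logically equivalent, and your substitution using~\eqref{eq:c1 relation a and barx} and $\bar a=-c_1^G(\gamma_{\tau^{2r-1}\oplus\epsilon})+c_1^G(\bar p^*\tau)$ is a correct one-line route to the identity. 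Your version is shorter because it treats the Leray--Hirsch relation as a black box; the paper's version is more self-contained and, as a by-product of the recursion, also records $c^G_{2r-1}(\beta)=\bar a^{2r-1}$, which feeds into the subsequent computation of $\bar U$. One small caution in your write-up: you should make explicit that $t$ means $c_1^G(\bar p^*\tau)=-2\bar x$ (not $c_1^G(\tau)$ on the base) when you plug into the projective-bundle relation, since the relation is phrased in terms of the pulled-back roots; your arithmetic does use the pulled-back value, so the conclusion stands.
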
 

\begin{proof} 
For an equivariant bundle $\xi$, we write
$c^G(\xi)=c_0^G(\xi)+c_1^G(\xi)+\ldots$ for its total (equivariant)
Chern class. Then the Whitney sum formula and the defining
equation~\eqref{basemainequation} for the bundle
$\beta_{\tau^{2r-1}\oplus\epsilon}$ together imply 
$$c^G(\bar{p}^*\tau^{2r-1})=
c^G(\gamma_{\tau^{2r-1}\oplus\epsilon})c^G(\beta).$$
Since $c^G(\bar{p}^*(\tau^{2r-1}) = c^G(\bar{p}^*\tau)^{2r-1} =
(1-2\bar{x})^{2r-1}$ and $c^G(\gamma_{\tau^{2r-1}\oplus \epsilon}) =
1-(\bar{a}+2\bar{x})$ by~\eqref{eq:c1 relation a and barx}, we
conclude 
\begin{equation} \label{e:e1}
c^G(\beta)=c^G(\bar{p}^*(\tau^{2r-1}))\bigl(c^G(\gamma_{\tau^{2r-1}\oplus\epsilon}
)\bigr)^{-1}
= \frac{(1-2\bar{x})^{2r-1}}{1-(\bar{a}+2\bar{x})} 
=(1-2\bar{x})^{2r-1} \sum_{m=0}^\infty (\bar{a}+2\bar{x})^m. 
\end{equation}
Taking the degree-$k$ part of~\eqref{e:e1} yields
$$ c_k^G(\beta) =
\sum_{j=0}^{k} {2r-1\choose j} (-2\bar{x})^j (\bar{a}+2\bar{x})^{k-j} $$
and so 
$$c_k^G(\beta) = (\bar{a}+2\bar{x}) c_{k-1}^G(\beta)+{2r-1\choose k}(-2\bar{x})^k.$$
In particular, since $\dim(\beta)=2r-1$ and ${2r-1\choose 2r} = 0$, we
conclude 
\begin{equation} \label{twormonechern}
 c_{2r}^G (\beta) = (\bar{a}+2\bar{x})c_{2r-1}^G(\beta) . 
\end{equation}
Also, for $k = 2r-1$, 
\begin{equation}\label{eq:c 2r-1 of beta}
c_{2r-1}^G (\beta) = 
 \sum_{j=0}^{2r-1} {2r-1\choose j} (-2x)^j (\bar{a}+2\bar{x})^{2r-1-j}
= (\bar{a}+2\bar{x}-2\bar{x})^{2r-1} = \bar{a}^{2r-1}.
\end{equation}
Putting~\eqref{twormonechern}
and~\eqref{eq:c 2r-1 of beta} together yields the desired result. 
\end{proof}

Next recall 
that the cohomology Thom class and $K$-theory class are related by a
formula involving the Chern character and the Todd
class. 
Specifically, 
for an equivariant $n$-bundle~$\xi$, we have 
\begin{equation} \label{toddeqn}
\overline{U_\xi} = (-1)^n\bar{p}^*\bigl(\Todd_G(\xi)\bigr)\, \ch_G(U_\xi)\end{equation}
where 
$$\Todd_G(L):=\frac{c_1^G(L)}{1-e^{-c_1^G(L)}}$$
for an equivariant line bundle~$L$, and the definition is extended to higher-rank bundles by
means of the splitting principle \cite[pages 13--14]{Lan05}. 
We have the following. 

\begin{lemma}\label{lemma:bar-U in terms of bar-x bar-a}
$\bar{U} =-\bar{a}^{2r-1}$. 
\end{lemma}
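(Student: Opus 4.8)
The plan is to compute $\overline{U}$ directly from the Todd-class formula~\eqref{toddeqn} applied to the bundle $\xi = \tau^{2r-1}$, which has rank $2r-1$. Since $n = 2r-1$ is odd, the sign $(-1)^n = -1$. We already computed $\chG(U) = (1-e^{\bar{a}})^{2r-1}$ above, so the remaining task is to evaluate $\bar{p}^*(\Todd_G(\tau^{2r-1}))$ and multiply.

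First I would recall that $\Todd_G$ is multiplicative over direct sums, so $\Todd_G(\bar{p}^*\tau^{2r-1}) = \Todd_G(\bar{p}^*\tau)^{2r-1}$. By Corollary~\ref{cor:Cherntau} we have $c_1^G(\tau) = -2c_1^G(\gamma)$, and the definition~\eqref{defbarx} gives $\bar{x} = c_1^G(\bar{p}^*\gamma)$, so $c_1^G(\bar{p}^*\tau) = -2\bar{x}$. Hence
\[
\bar{p}^*(\Todd_G(\tau^{2r-1})) = \left(\frac{-2\bar{x}}{1-e^{2\bar{x}}}\right)^{2r-1}.
\]
Next I would observe the elementary identity $a = \gamma^*_{\tau^{2r-1}\oplus\epsilon}\otimes \bar{p}^*(\tau)$, so from~\eqref{eq:c1 relation a and barx} we get $\bar{a} = -c_1^G(\gamma_{\tau^{2r-1}\oplus\epsilon}) - 2\bar{x}$, i.e. $c_1^G(\gamma_{\tau^{2r-1}\oplus\epsilon}) = -\bar{a} - 2\bar{x}$. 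The key algebraic manipulation is then to rewrite $1 - e^{\bar a}$ in a form that produces the Todd factor. Combining, the product $(-1)^{2r-1}\bar{p}^*(\Todd_G)\cdot\chG(U)$ becomes
\[
-\left(\frac{-2\bar{x}}{1-e^{2\bar{x}}}\right)^{2r-1}(1-e^{\bar a})^{2r-1} = -\left(\frac{2\bar{x}(1-e^{\bar a})}{e^{2\bar{x}}-1}\right)^{2r-1}.
\]
I would then argue that inside $\tilde H^\pi_T(\Thom(\tau^{2r-1});\Q)$, where we have the relation from Lemma~\ref{lemma:relation on bar-a} (namely $\bar{a}^{2r}+2\bar{x}\bar{a}^{2r-1}=0$, equivalently $\bar{a}^{2r-1}(\bar a + 2\bar x) = 0$), the quantity $2\bar{x}(1-e^{\bar a})/(e^{2\bar x}-1)$ simplifies to exactly $\bar{a}$ after multiplication by $\bar a^{2r-2}$ or upon taking the relevant degree component. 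The cleanest route: expand $1 - e^{\bar a} = -\bar a(1 + \tfrac{\bar a}{2} + \cdots)$ and $e^{2\bar x} - 1 = 2\bar x(1 + \bar x + \cdots)$, so the fraction equals $-\bar a \cdot \frac{1+\bar a/2 + \cdots}{1 + \bar x + \cdots}$; raising to the power $2r-1$ and using that any monomial of the form $\bar a^i \bar x^j$ with $i \geq 2r-1$ and $j \geq 1$ vanishes (since $\bar a^{2r-1}\bar x$ is a multiple of $\bar a^{2r-1}(\bar a + 2\bar x) - \bar a^{2r} = -\bar a^{2r}$, and degree considerations on the Thom space kill high-degree terms), only the leading term $(-\bar a)^{2r-1} = -\bar a^{2r-1}$ survives, giving $\overline{U} = -(-\bar a^{2r-1}) \cdot (-1) \cdot (-1)^{?}$; carefully tracking signs yields $\overline{U} = -\bar{a}^{2r-1}$.

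**The main obstacle** I anticipate is the bookkeeping of which higher-order correction terms in the power series genuinely vanish in $\tilde H^\pi_T(\Thom(\tau^{2r-1});\Q)$: one must justify, using the module structure over $H^\pi_T(\pt;\Q)$ together with Lemma~\ref{lemma:relation on bar-a} and the fact that $\tilde H^\pi_T(\Thom(\tau^{2r-1});\Q)$ is free of rank $2$ on $\{\bar U, \bar x \bar U\}$ (equivalently that the ideal of relations is generated by $\bar a^{2r-1}(\bar a + 2\bar x)$ and $\bar a^{2r}$-type relations), that every term of the expansion other than the leading $(-\bar a)^{2r-1}$ is either of the form $\bar a^{\geq 2r-1}\cdot(\text{positive power of }\bar x)$ and hence zero, or lies in higher cohomological degree than anything in the rank-$2$ module and hence is also zero. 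Making this vanishing argument rigorous — rather than hand-waving "lower-order terms" — is the delicate point; once it is in place, the sign tracking is routine.
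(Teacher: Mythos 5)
Your approach is the same as the paper's — invoke equation~\eqref{toddeqn}, compute $\bar{p}^*\bigl(\Todd_G(\tau^{2r-1})\bigr)$ using $c_1^G(\bar p^*\tau)=-2\bar x$, and then simplify the product with $\chG(U)=(1-e^{\bar a})^{2r-1}$. However, the crucial simplification step in your sketch contains a genuine error. You assert that ``any monomial of the form $\bar a^i \bar x^j$ with $i \geq 2r-1$ and $j\geq 1$ vanishes'' because of Lemma~\ref{lemma:relation on bar-a} and ``degree considerations on the Thom space.'' This is false. Lemma~\ref{lemma:relation on bar-a} gives $\bar a^{2r-1}\bar x = -\tfrac{1}{2}\bar a^{2r}$, and $\bar a^{2r}=-2\bar x\,\bar a^{2r-1}$ is a nonzero multiple of the free generator $\bar x\bar U$ of $\tilde H_T^{\pi}\bigl(\Thom(\tau^{2r-1});\Q\bigr)$ (once one knows $\bar a^{2r-1}=-\bar U$). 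There is no degree truncation in the Thom space that would force these classes to vanish; $\tilde H^\pi_T$ is a free rank-$2$ module over $H^\pi_T(\pt;\Q)$, which is itself a polynomial algebra and thus has classes in arbitrarily high degree. So the series you write down does \emph{not} collapse to its leading term by any ``high-degree terms vanish'' argument, and without a correct mechanism for the collapse the sign-tracking you defer is not a routine matter to repair.

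The correct mechanism, which the paper uses and which your sketch does not identify, is a \emph{substitution}, not a vanishing: Lemma~\ref{lemma:relation on bar-a}, iterated, gives $\bar a^{2r-1}(2\bar x)^k = \bar a^{2r-1}(-\bar a)^k$ for every $k\geq 0$, hence $\bar a^{2r-1}\,P(2\bar x)=\bar a^{2r-1}\,P(-\bar a)$ for any formal power series $P$. Since $(1-e^{\bar a})^{2r-1}$ contains a factor $\bar a^{2r-1}$, one may replace $2\bar x$ by $-\bar a$ throughout the Todd factor before multiplying. After this substitution the Todd factor becomes (up to sign) exactly $\bigl(\bar a/(1-e^{\bar a})\bigr)^{2r-1}$, which cancels against $(1-e^{\bar a})^{2r-1}$ to leave a clean power of $\bar a$; no remaining power series survives, which is what you were trying and failing to argue by a bogus vanishing claim. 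I would also flag that the expression $\tfrac{2\bar x}{1-e^{-2\bar x}}$ appearing in the paper's computation and the expression $\tfrac{-2\bar x}{1-e^{2\bar x}}$ in yours are not equal (they differ by a unit factor $e^{2\bar x}$), and carrying this factor through the substitution changes the final power series one obtains; to land exactly on $-\bar a^{2r-1}$ one must be careful about which dual/sign convention is in force in~\eqref{toddeqn}. Your proposal acknowledges unresolved sign ambiguity, and in combination with the incorrect vanishing claim, the argument as written does not establish the lemma.
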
 

\begin{proof} 
Applying~\eqref{toddeqn} to the bundle 
$\tau^{2r-1}$ and using the multiplicativity of the Todd class yields
\begin{align*}
\bar{U} &= (-1)^{2r-1}\bar{p}^*\bigl(\Todd_G(\tau^{2r-1})\bigr)\, \chG(U) \cr
&= \Bigl(-\bar{p}^*\bigl(\Todd_G(\tau)\bigr) \Bigr)^{2r-1} \bigl(1-e^{\bar{a}}\bigr )^{2r-1}  \cr
&=\left( \frac{2\bar{x }    }{\bigl(1-e^{-2\bar{x}}\bigr )}\right)^{2r-1}
\bigl(1-e^{\bar{a}}\bigr )^{2r-1}\cr
&=\left( \frac{ (- \bar{a}) }{\bigl(1-e^{\bar{a}}\bigr )}\right)^{2r-1}
\bigl(1-e^{\bar{a}}\bigr )^{2r-1}\cr
&= (-1)^{2r-1} \bar{a}^{2r-1} \cr
&= - \bar{a}^{2r-1}. \cr
\end{align*}
where we have used Lemma~\ref{lemma:relation on bar-a} together with the fact
that $(1-e^{\bar{a}})^{2r-1}$ is divisible by~$\bar{a}^{2r-1}$.
\end{proof}

Using Lemmas~\ref{lemma:relation on bar-a} and~\ref{lemma:bar-U in
  terms of bar-x bar-a}, some algebraic manipulation (briefly sketched
below) allows us to
express $\chG(U) = (1-e^{\bar{a}})^{2r-1}$ in terms of $\bar{x}$ and
$\bar{U}$. Given a variable $y$, note that the expression 
$g(y):=\left(\frac{e^y-1}{y}\right)^{2r-1}$ can be rewritten as a sum
$g_1(y) + yg_2(y)$, where both $g_1$ and $g_2$ are even functions of
$y$, as follows: 
\begin{equation}
g(y) =e^{(2r-1)y/2}\left(\frac{\sinh(y/2)}{y/2}\right)^{2r-1}
=g_1(y^2) + y g_2(y^2)
\end{equation}
where
\begin{equation}\label{eq:def g1}
g_1(y)=\cosh\bigl((2r-1)\sqrt{y}/2\bigr)
\left(\frac{\sinh(\sqrt{y}/2)}{\sqrt{y}/2}\right)^{2r-1}
\end{equation}
and
\begin{equation}\label{eq:def g2}
g_2(y)=
\frac{\sinh\bigl((2r-1)\sqrt{y}/2\bigr)}{\sqrt{y}}
\left(\frac{\sinh(\sqrt{y}/2)}{\sqrt{y}/2}\right)^{2r-1}.
\end{equation}
Applying this to our situation, we get $\chG(U)=-\bar{a}^{2r-1}g(\bar{a})
=-\bar{a}^{2r-1}\bigl(g_1(\bar{a}^2)+\bar{a}g_2(\bar{a}^2)\bigr)$.
Also note that 
$\bar{x}^2=\bar{t}$, as can be seen from Lemmas~\ref{lemma:barcalL
  squared} and~\ref{relatingLtogamma} and the definition
of~$\bar{x}$. This, together with 
Lemmas~\ref{lemma:relation on bar-a} and~\ref{lemma:bar-U in terms of
  bar-x bar-a}, gives 
\begin{eqnarray} \label{reflabel}\chG(U)&=&-\bar{a}^{2r-1}\Bigl(
g_1\bigl((-2\bar{x})^2\bigr)+\bar{a}g_2\bigl((-2\bar{x})^2\bigr)
\Bigr) \cr
&=&-\bar{a}^{2r-1}\bigl(g_1(4\bar{t})-2\bar{x}g_2(4\bar{t})\bigr) = 
g_1(4\bar{t})\bar{U}-2g_2(4\bar{t})\bar{x}\bar{U}.
\end{eqnarray}

Since $\bar{x}^2=\bar{t}$, we also get 
\begin{align*}
\chG\bigl(xU\bigr)&=\bigl(\chG(x)\chG(U)\bigr)\cr
&=\bigl(p(\bar{t})\bar{x}+q(\bar{t})\bigl)
\bigl(g_1(4\bar{t}) \bigl (\bar{U}\bigr ) 
-2g_2(4\bar{t})\bigl(\bar{x}\bar{U}\bigr ) \bigr)\cr
&=q(\bar{t})g_1(4\bar{t})\bigl (\bar{U}\bigr ) -2\bar{t}p(\bar{t})g_2(4\bar{t}) \bigl ( \bar{U} \bigr )\cr
&\phantom{=}
+p(\bar{t})g_1(4\bar{t})\bar{x}\bar{U}
-2q(\bar{t})g_2(4\bar{t})\bar{x}\bar{U}\cr
&=\bigl(q(\bar{t})g_1(4\bar{t})-2\bar{t}p(\bar{t})g_2(4\bar{t})\bigr)\bar{U}
+\bigl(p(\bar{t})g_1(4\bar{t})-2q(\bar{t})g_2(4\bar{t})\bigr)\bar{x}\bar{U}\cr
\end{align*}
where $q(t):=\cosh(\sqrt{t})$ as in Equation (\ref{defqt}).

Thus we conclude 
\begin{equation} \label{mdef}
{\bf M} = \begin{pmatrix}&  g_1(4 \bar{t}) & - 2 g_2 (4 \bar{t})  \cr
&  q(\bar{t}) g_1(4 \bar{t})-2 \bar{t} p(\bar{t}) g_2 (4\bar{t})
& p(\bar{t}) g_1(4\bar{t}) - 2 q(\bar{t}) g_2(4 \bar{t}) \cr
\end{pmatrix}.
\end{equation}

\subsection{Computation of matrix $\bar{Q}$ and conclusion of proof of Theorem~\ref{KTF2r}}\label{subsection:finish proof}

In this section we prove that $\bar{Q}$ is the identity matrix,
and show the determinants of $\chT(Q)$ (and hence $Q$) are~$
1$. This allows us to conclude the proof of
Proposition~\ref{proposition:main technical} and Theorem~\ref{KTF2r}. 

We begin with the following. 

\begin{lemma}\label{lemma:barQ is identity}
The matrix $\bar{Q}$ is diagonal with integer entries. 
\end{lemma}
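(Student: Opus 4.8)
The plan is to extract everything from the internal $\ZZ$-grading of equivariant cohomology (which has no analogue in equivariant $K$-theory) together with the integrality of the cohomology Thom class; with the bases already fixed in Section~\ref{subsec:construct bases}, the lemma reduces to a short degree count.

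First I would record the cohomological degrees of the four basis elements in~\eqref{eq:main diagram with basis}. Since $\tau^{2r-1}$ is the sum of $2r-1$ copies of the \emph{line} bundle $\tau$, its cohomology Thom class $\bar{U}$ lies in $\tilde{H}^{2(2r-1)}_G$, and since $\bar{x}=c_1^G(\bar{p}^*\gamma)\in H^2_G$, the class $\bar{x}\bar{U}$ lies in $\tilde{H}^{2(2r)}_G$. On the other side, $\bar{K}_1=\bar{s}_{2r-1}+(\text{lower-order terms})$ is homogeneous of degree $2(2r-1)$ and $\bar{K}_2=-\bar{s}_{2r}+(\text{lower-order terms})$ is homogeneous of degree $2(2r)$. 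Next I would note that all four classes are Weyl-invariant: $\bar{U}$ and $\bar{x}\bar{U}$ are pulled back from $G$-equivariant cohomology (via the $G$-maps $\kappa$ and $\Phi_{2r}$), while $\bar{K}_1,\bar{K}_2$, being built from the matrix~\eqref{h:matrix} whose entries lie in $\ZZ[\bar{t}]$, lie in the $\ZZ[\bar{t}]$-span of $\bar{s}_0,\dots,\bar{s}_{2r}$ and hence in $\bigl(H^*_G((\PP^1)^{2r};\QQ)\bigr)^{S_{2r}}$. Restricting $\Phi_{2r}^*\circ\kappa^*$ to these Weyl-invariant parts therefore gives a degree-preserving morphism of free $\QQ[\bar{t}]$-modules (note $H^*_G(\pt;\QQ)=\QQ[\bar{t}]$) on the bases $\{\bar{U},\bar{x}\bar{U}\}$ and $\{\bar{K}_1,\bar{K}_2\}$, so the entries $\bar{q}_{ij}$ of $\bar{Q}$, determined by $\Phi_{2r}^*\kappa^*(\bar{U})=\bar{q}_{11}\bar{K}_1+\bar{q}_{21}\bar{K}_2$ and $\Phi_{2r}^*\kappa^*(\bar{x}\bar{U})=\bar{q}_{12}\bar{K}_1+\bar{q}_{22}\bar{K}_2$, all lie in $\QQ[\bar{t}]$.

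Comparing homogeneous degrees now forces $\bar{q}_{11}\in H^0_G(\pt;\QQ)=\QQ$, $\bar{q}_{21}\in H^{-2}_G(\pt;\QQ)=0$, $\bar{q}_{12}\in H^{2}_G(\pt;\QQ)=0$ and $\bar{q}_{22}\in H^0_G(\pt;\QQ)=\QQ$; the points that make this work are that $H^2_G(\pt;\QQ)=0$ and that the degree gap $\deg\bar{K}_2-\deg\bar{K}_1=2$ cannot be bridged by a power of $\bar{t}$, which has degree $4$. Hence $\bar{Q}$ is diagonal with rational entries. For integrality I would use that $\bar{U}=U_{\tau^{2r-1}}$ and $\bar{x}=c_1^G(\bar{p}^*\gamma)$, and therefore $\bar{x}\bar{U}$, are \emph{integral} classes, so $\Phi_{2r}^*\kappa^*(\bar{U})=\bar{q}_{11}\bar{K}_1$ and $\Phi_{2r}^*\kappa^*(\bar{x}\bar{U})=\bar{q}_{22}\bar{K}_2$ lie in $H^*_G((\PP^1)^{2r};\ZZ)$. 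The $S_{2r}$-invariant subring of $H^*_G((\PP^1)^{2r};\ZZ)$ is free over $\ZZ[\bar{t}]$ on $\bar{s}_0,\dots,\bar{s}_{2r}$ (it is the fixed submodule of the permutation $\ZZ[\bar{t}]$-module on $\{0,1\}^{2r}$, whose orbit sums are precisely the squarefree symmetric functions $\bar{s}_k$), and in this basis $\bar{K}_1$ has $\bar{s}_{2r-1}$-coefficient $1$ while $\bar{K}_2$ has $\bar{s}_{2r}$-coefficient $-1$; reading off these coefficients from the integral classes $\bar{q}_{11}\bar{K}_1$ and $\bar{q}_{22}\bar{K}_2$ forces $\bar{q}_{11},\bar{q}_{22}\in\ZZ$.

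I expect the only delicate point to be the bookkeeping in the second paragraph: confirming that $\bar{K}_1,\bar{K}_2$ may be taken simultaneously $S_{2r}$-invariant, Weyl-invariant and integral (already arranged by the construction in Section~\ref{subsec:construct bases}), and that $\rank\tau^{2r-1}=2r-1$, so that $\deg\bar{U}=\deg\bar{K}_1$ and $\deg\bar{x}\bar{U}=\deg\bar{K}_2$ match up exactly; given that, the two degree/coefficient comparisons above finish the proof. As an alternative to invoking Weyl-invariance of the $\bar{K}_i$, one may run the same argument over $\QQ[\bar{b}]$ and then use that $\Phi_{2r}^*\kappa^*(\bar{U})$ and $\Phi_{2r}^*\kappa^*(\bar{x}\bar{U})$ are themselves Weyl-invariant to kill the spurious degree-$2$ contribution $\bar{b}\,\bar{K}_1$; the content is identical.
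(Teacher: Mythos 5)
Your argument matches the paper's in all essentials: Weyl-invariance of $\Phi_{2r}^*\kappa^*(\bar U)$ and $\Phi_{2r}^*\kappa^*(\bar x\bar U)$ (the paper attributes this to $\tau$ being a $G$-bundle), degree comparison against the homogeneous generators $\bar K_1,\bar K_2$ to kill the off-diagonal entries (the paper phrases this as the degree-$4r$ Weyl-invariant part being spanned by $\bar K_2$ alone, since $\bar b\bar K_1$ is not $W$-invariant — exactly the alternative formulation you offer at the end), and integrality of the Thom class plus the leading coefficients $\pm1$ of $\bar K_1,\bar K_2$ for the $\ZZ$-entries. The paper is terser, but the reasoning is the same; your explicit separation of the $\QQ[\bar t]$-module argument from the integrality step is a slightly more careful presentation of the identical idea.
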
 

\begin{proof} 
The elements $\Phi_{2r}^*\kappa^*(\bar{U})$ and
$\Phi_{2r}^*\kappa^*(\bar{x}\bar{U})$ are 
Weyl-invariant since $\tau$ is a $G$-bundle.
We know that
$$\Ker i^*\cap\{\mbox{symmetric polynomials}\}\cong\Z\qquad
\mbox{ in degree~$2(2r-1)$,}$$
generated by $\bar{K}_1$ and
$$\Ker i^*\cap\{\mbox{symmetric polynomials}\}\cong\Z\oplus\Z\qquad
\mbox{ in degree~$4r$,}$$
generated by $\bar{K}_2$ and~$\bar{b}\bar{K}_1$.
The intersections of the left hand sides of the above two equations with the
Weyl invariants are generated by $\bar{K}_1$ and~$\bar{K}_2$
respectively. 
Recall that $\bar{K}_1$ and $ \bar{K}_2$ are homogeneous.
Thus ${\Phi_{2r}}^*\kappa^*(\bar{U})=\lambda_1\bar{K}_1$
and ${\Phi_{2r}}^*\kappa^*(\bar{x}\bar{U})=\lambda_2\bar{K}_2$
for some integers~$\lambda_1$ and~$\lambda_2$.
\end{proof}

Let $\mathsf{U}$ be the Thom class of $\tau^{2r-1}$ in ordinary
cohomology. We next show that the diagonal matrix entries $\lambda_1$
and $\lambda_2$ (from the proof of the lemma above) are both~$1$,
by doing a computation in ordinary cohomology. 

\begin{lemma}\label{HimPhi}
The constants $\lambda_1$ and $\lambda_2$ appearing in the proof of Lemma~\ref{lemma:barQ is identity} are both equal to $1$. 
In particular, $\bar{Q}$ is the $2\times 2$ identity matrix. 
\end{lemma}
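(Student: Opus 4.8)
The plan is to push everything down to ordinary (non-equivariant) rational cohomology, where the ring $H^*(F_{2r};\QQ)$ is completely explicit, and to compute the two integers $\lambda_1,\lambda_2$ there directly. Applying the forgetful map $H^*_T(-;\QQ)\to H^*(-;\QQ)$ to the identities $\Phi_{2r}^*\kappa^*(\bar{U})=\lambda_1\bar{K}_1$ and $\Phi_{2r}^*\kappa^*(\bar{x}\bar{U})=\lambda_2\bar{K}_2$ from the proof of Lemma~\ref{lemma:barQ is identity}, the class $\bar{U}$ maps to the ordinary Thom class $\mathsf{U}$, the class $\bar{x}$ maps to the ordinary $c_1(\bar{p}^*\gamma)$, and — since every lower-order summand of $\bar{K}_1$ and $\bar{K}_2$ carries a positive power of $\bar{t}$ and hence vanishes modulo $\bar{t}$ — we get $\bar{K}_1\mapsto\bar{s}_{2r-1}$ and $\bar{K}_2\mapsto-\bar{s}_{2r}$. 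Thus it suffices to prove $\Phi_{2r}^*\kappa^*(\mathsf{U})=\bar{s}_{2r-1}$ and $\Phi_{2r}^*\kappa^*(x\mathsf{U})=-\bar{s}_{2r}$ in $H^*\bigl((\PP^1)^{2r};\QQ\bigr)=\QQ[\bar{L}_1,\dots,\bar{L}_{2r}]/(\bar{L}_j^2)$.

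For the computation I would assemble the following. Reducing Theorem~\ref{HFG2rQ} modulo $\bar{t}$ gives $H^*(F_{2r};\QQ)\cong\QQ[\bar{L}]/(\bar{L}^{2r+1})$; the cofiber sequence $F_{2r-2}\hookrightarrow F_{2r}\to\Thom(\tau^{2r-1})$ together with the vanishing of $H^*(F_{2r-2};\QQ)$ in degrees $\geq 4r-2$ shows $\kappa^*$ is injective with image the top two graded pieces $\QQ\bar{L}^{2r-1}\oplus\QQ\bar{L}^{2r}$; and the classical divided-power structure of $H^*(\Omega SU(2);\ZZ)=\Gamma[x]$ (see also \cite{HJS12}) identifies $H^*(F_{2r};\ZZ)$ as the truncated divided power algebra on $\bar{L}$, so $\bar{L}^{2r-1}=\pm(2r-1)!\,\gamma_{2r-1}$ and $\bar{L}^{2r}=\pm(2r)!\,\gamma_{2r}$ for the integral generators $\gamma_{2r-1},\gamma_{2r}$ of $H^{4r-2},H^{4r}$. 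Combining Lemma~\ref{lemma:s1 in image} (which gives $\Phi_{2r}^*(\bar{L})=\bar{s}_1$) with the elementary identity $(\bar{s}_1)^m=m!\,\bar{s}_m$ in $\QQ[\bar{L}_1,\dots,\bar{L}_{2r}]/(\bar{L}_j^2)$ yields $\Phi_{2r}^*(\bar{L}^m)=m!\,\bar{s}_m$. Since $\mathsf{U}$ generates $\tilde{H}^{4r-2}(\Thom(\tau^{2r-1});\ZZ)\cong\ZZ$ and $\kappa^*$ is an integral isomorphism in that degree, $\kappa^*(\mathsf{U})=\pm\bar{L}^{2r-1}/(2r-1)!$, whence $\Phi_{2r}^*\kappa^*(\mathsf{U})=\pm\bar{s}_{2r-1}$; the analogous argument gives $\Phi_{2r}^*\kappa^*(x\mathsf{U})=\pm\bar{s}_{2r}$. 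This already forces $\lambda_1=\pm1$ and $\lambda_2=\pm1$.

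The only remaining — and genuinely substantive — point is to pin both signs as $+1$, which I would do by induction on $r$. The base case $r=1$ is a direct computation: $\kappa$ is the identity on $F_2$, $\kappa^*(\mathsf{U})=\bar{L}$ by Definition~\ref{definition:L over F2}, and Lemma~\ref{PhiL} together with the equality $(\Theta^{-1})^*(\bar{x})=-\bar{L}_1$ (read off from Lemma~\ref{lemma:PtauplusEpsilon-prodP1s} and Definition~\ref{definition:L_j and barL_j}) give $\Phi_2^*\kappa^*(\mathsf{U})=\bar{L}_1+\bar{L}_2=\bar{s}_1=\bar{K}_1$ and $\Phi_2^*\kappa^*(x\mathsf{U})=(-\bar{L}_1)(\bar{L}_1+\bar{L}_2)=-\bar{L}_1\bar{L}_2=-\bar{s}_2=\bar{K}_2$. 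For the inductive step I would use that the matrix-multiplication map $F_{2(r-1)}\times F_2\to F_{2r}$ of~\eqref{matmult} is filtration-preserving, so it induces a map of filtration quotients $\Thom(\tau^{2r-3})\wedge\Thom(\tau)\to\Thom(\tau^{2r-1})$ compatible with the maps $\kappa$ and with the $\Phi$'s (via Lemma~\ref{lemma:induction on r geometric} and the constructions of \cite{HJS12}); because every bundle in sight is complex and the relevant Thom classes carry their canonical complex orientations — which are preserved by the homeomorphisms of Lemma~\ref{lemma:PtauplusEpsilon-prodP1s} and Proposition~\ref{prop:filtration quotient as Thom space} — the sign propagates from level $r-1$ to level $r$ unchanged. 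This orientation and sign bookkeeping in the inductive step is the main obstacle; once it is in place one concludes $\lambda_1=\lambda_2=1$, so $\bar{Q}$ is the $2\times2$ identity matrix.
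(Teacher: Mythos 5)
Your proposal is correct in substance and takes a route that is the cohomological dual of the one in the paper; both reduce to ordinary cohomology, but they then diverge in a meaningful way. You establish $\lambda_1,\lambda_2\in\{\pm1\}$ by computing $\Phi_{2r}^*$ on the integral generator of $H^{4r-2}(F_{2r};\ZZ)$ directly, via the divided power structure $\bar{L}^{m}=m!\,\gamma_m$ on $H^*(F_{2r};\ZZ)$ and the identity $\bar{s}_1^m = m!\,\bar{s}_m$ in $\QQ[\bar{L}_1,\dots,\bar{L}_{2r}]/(\bar{L}_j^2)$. The paper instead dualizes to homology and observes that $(\PP^1)^{2r}\to F_{2r}\hookrightarrow\Omega G$ is the $2r$-fold Pontrjagin multiplication $(\PP^1)^{2r}\to(\Omega S^3)^{2r}\to\Omega S^3$, which hits the generator $y^{2r-1}$ of the polynomial ring $H_*(\Omega S^3)=\ZZ[y]$ in degree $4r-2$; this is equivalent to your argument since $\ZZ[y]$ and $\Gamma[\bar{L}]$ are Hopf-dual, but yours is more self-contained in cohomology and avoids explicitly invoking the Pontrjagin ring structure. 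One difference worth noting: the paper first shows $\lambda_1=\lambda_2$ by the exterior-algebra relation $\mathsf{x}\mathsf{K}_1=-\mathsf{L}_1\mathsf{s}_{2r-1}=-\mathsf{s}_{2r}=\mathsf{K}_2$, so that $\det\bar{Q}=\lambda_1^2=1$ holds regardless of the overall sign — which is why they can afford to wave at the sign with the remark that ``finer analysis shows $\lambda_1=\lambda_2=1$.'' You compute $\lambda_1$ and $\lambda_2$ separately, so your determinant is only $\pm 1$ a priori, and pinning the sign is genuinely necessary for you (not just for the literal statement of the lemma but also for the downstream determinant). Your base case $r=1$ is computed correctly, and your inductive step via compatibility of Thom classes under the multiplication-induced map $\Thom(\tau^{2r-3})\wedge\Thom(\tau)\to\Thom(\tau^{2r-1})$ is a reasonable sketch — and, to be fair, the paper itself does not supply details for the sign either — but as written it leans on orientation-preservation properties of the identifications in \cite{HJS12} that you cannot verify without those constructions in hand; you might instead import the $\lambda_1=\lambda_2$ observation, which would let you bypass the inductive step entirely for the determinant argument, and then handle a single sign.
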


\begin{proof}
Let $\mathsf{K}_1$, $\mathsf{K}_2$, $\mathsf{x}$, $\mathsf{L}_j$ and
$\mathsf{s}_j$ be the ordinary cohomology
verions of $\bar{K}_1$, $\bar{K}_2$, $\bar{x}$, $\bar{L}_j$ and $\bar{s}_j$.
Using that $\mathsf{L}_1^2=0$ in the exterior
algebra $H^*\bigl((\PP^1)^{2r}\bigr)$ we get
$$\mathsf{x}\mathsf{K}_1=-\mathsf{L}_1\mathsf{s}_{2r-1}=-\mathsf{s}_{2r}
=\mathsf{K}_2,$$
and it follows that $\lambda_1=\lambda_2$.
 From Section 2 (\ref{exseq}), we know that
$\kappa^*(\mathsf{U})$ 
generates $H^*(F_{2r})$ in degree~$4r-2$.
(This is by the Thom isomorphism in ordinary cohomology.) 
Since ${\Phi_{2r}}^*\kappa^*(\mathsf{U})$
lies in
$\Ker i^*\cap\mbox{\{symmetric polynomials\}}$,
it is a
multiple of the ordinary cohomology version of the
generator~$\mathsf{K}_1$.
After dualizing, up to sign, the statement in the Lemma becomes equivalent
to the statement that $\kappa_*{\Phi_{2r}}_*$ is onto in 
homology in degree~$4r-2$.
Since $\kappa_*$ is an isomorphism in these degrees, we are asking whether
${\Phi_{2r}}_*$ is onto on~$H_{4r-2}(~)$.
Since $F_{2r}\rInto\Omega G$ is an isomorphism in these degrees, we may
instead consider the
composition $(\PP^1)^{2r}\rTo^{\Phi_{2r}} F_{2r}\rInto\Omega G$.
By construction, the composition
$(\PP^1)^{2r}\rTo^{\Phi_{2r}} F_{2r}\rInto\Omega G$
is the composition
$$(\PP^1)^{2r}\rTo^{i^{2r}} (\Omega G)^{2r}\rTo^{\rm mult}\Omega G$$
where $i:S^2\cong\PP^1\to\Omega G\cong\Omega S^3$ is the standard inclusion. 
Therefore $\Phi_{2r}^*$ can be computed from knowledge of $i_*$
and ${\rm mult}_*$.
The latter is the Pontrjagin multiplication in the Hopf
algebra $H_*(\Omega S^3)$, which is the polynomial algebra on the image, $y$, 
under~$i_*$ of a generator of~$H_2(S^2)$.
Since $y^{2r-1}$
is a group generator of this polynomial algebra in degree~$4r-2$
the map ${\Phi_{2r}}_*$ is surjective in degree~$4r-2$.
That is, $\lambda_1=\lambda_2=\pm1$.

The sign is not critical to us --- it does not even affect the
determinant --- so we omit the details, but finer analysis shows that $\lambda_1=\lambda_2=1$, as claimed. 
\end{proof}

We are now in a position to prove Proposition~\ref{proposition:main technical}. 

\begin{proof}[Proof of Proposition~\ref{proposition:main technical}]

We begin by computing some determinants. From
~\eqref{mdef} we have 
\begin{align*}
\det {\bf M}&=p(\bar{t})\bigl(g_1(4\bar{t})\bigr)^2
-2q(\bar{t})g_1(4\bar{t})g_2(4\bar{t})
+2q(\bar{t})g_1(4\bar{t})g_2(4\bar{t})
-4\bar{t}p(\bar{t})\bigl(g_2(4\bar{t})\bigr)^2\cr
&=p(\bar{t})\Bigl(\bigl(g_1(4\bar{t})\bigr)^2
-4\bar{t}\bigl(g_2(4\bar{t})\bigr)^2\Bigl).\cr
\end{align*}
From the definitions~\eqref{eq:def g1} and~\eqref{eq:def g2} of $g_1$ and $g_2$, it follows that 
$\bigl(g_1(4\bar{t})\bigr)^2-4\bar{t}\bigl(g_2(4\bar{t})\bigr)^2
=\bigl(p(4\bar{t}/4)\bigr)^{2(2r-1)} = (p(\bar{t})^{2(2r-1)}$ 
so we conclude 
\begin{equation}\label{eq:det M}
\det {\bf M}=p(\bar{t})\bigl(p(\bar{t})\bigr)^{2(2r-1)}
=\bigl(p(\bar{t})\bigr)^{4r-1}.
\end{equation}

For the matrix ${\bf N}$, the description~\eqref{ndef} straightforwardly yields 
\begin{equation}\label{eq:det N}
\det {\bf N} = \bigl(p(\bar{t})\bigr)^{2r-1}\bigl(p(\bar{t})\bigr)^{2r}
=\bigl(p(t)\bigr)^{4r-1}.
\end{equation}

We know from~\eqref{matrixeqn} that 
\[
\det {\bf N} \det \chT(Q) = \det \bar{Q} \det {\bf M}
\]
in the (torsion-free) module $H^{\pi}_T(\pt;\Q)$. The above
computations of $\det {\bf M}$, $\det {\bf N}$, and $\bar{Q}$, together with
Lemma~\ref{HimPhi}, immediately yields
\[
p(\bar{t})^{4r-1} \det \bigl( \chT(Q)\bigr) = p(\bar{t})^{4r-1}
\]
from which it follows that $\det \bigl( \chT(Q)\bigr )  = 1$. Since $\chT$ is an
injective homomorphism we conclude $\det Q = 1$, hence in
particular $Q$ is invertible. This in turn implies that the images of
$U$ and $xU$ under $\Phi_{2r}^* \circ \kappa^*$ form an $R(T)$-module basis
for $\Ker i^* \cap \Bigl(K_T\bigl( (\PP^1)^{2r} \bigr)
\Bigr)^{S_{2r}}$. The statements of the proposition now follow. 
\end{proof} 

Next we turn to the proof of Theorem~\ref{KTF2r}. We will use the
following statement which can be proven using the monomial basis for
symmetric polynomials (see e.g. the classic text by MacDonald
\cite{Mac98}).

\begin{lemma}\label{lemma:macdonald}
Let $R$ be a ring and let $p(x_1,\ldots,x_n)\in R[x_1,\ldots,x_n]$ be a
symmetric polynomial which is linear in the variable $x_j$ for all~$j$.
Then $p(x_1,\ldots,x_n)$ is an $R$-linear combination of the
elementary symmetric polynomials 
$$s_0(x_1,\ldots,x_n),\ldots, s_n(x_1,\ldots, x_n). $$
\qed
\end{lemma}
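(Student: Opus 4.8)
The plan is to invoke the standard structure theory of symmetric polynomials over an arbitrary commutative coefficient ring. Recall that the \emph{monomial symmetric polynomials} $m_\lambda(x_1,\dots,x_n)$, indexed by partitions $\lambda$ with at most $n$ parts, form an $R$-module basis for the ring $R[x_1,\dots,x_n]^{S_n}$ of symmetric polynomials: each $m_\lambda$ is the sum of the distinct monomials in the $S_n$-orbit of $x_1^{\lambda_1}\cdots x_n^{\lambda_n}$, so the $m_\lambda$ have integer ($0/1$) coefficients and are $R$-linearly independent because they involve pairwise disjoint sets of monomials. (This is coefficient-ring-independent combinatorics; see Macdonald \cite{Mac98}.) So I would begin by writing the given $p$ uniquely as $p=\sum_\lambda c_\lambda m_\lambda$ with $c_\lambda\in R$.

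Next I would use the linearity hypothesis. Since $p$ has degree at most $1$ in each variable $x_j$, every monomial occurring in $p$ is squarefree. If some partition $\lambda$ with $c_\lambda\neq 0$ had a part $\lambda_i\geq 2$, then $m_\lambda$ would contain a monomial in which some variable appears with exponent $\geq 2$; because distinct monomials are $R$-linearly independent in $R[x_1,\dots,x_n]$ and the monomials appearing in $m_\mu$ for $\mu\neq\lambda$ are disjoint from those in $m_\lambda$, no cancellation is possible, contradicting squarefreeness of $p$. Hence the only partitions that can appear are $\lambda=(1^k)$ with $k$ ones, for $0\leq k\leq n$.

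Finally I would observe the elementary identity $m_{(1^k)}(x_1,\dots,x_n)=s_k(x_1,\dots,x_n)$, which is immediate from the definitions since $s_k$ is precisely the sum of all squarefree degree-$k$ monomials in $x_1,\dots,x_n$. Substituting this into the expansion gives $p=\sum_{k=0}^n c_{(1^k)}\,s_k(x_1,\dots,x_n)$, exhibiting $p$ as an $R$-linear combination of $s_0,\dots,s_n$, as claimed. There is no real obstacle in this argument; the only point deserving a word of care is that the monomial-basis statement must be used over the general ring $R$ rather than over a field or $\Z$, which is exactly why it is convenient to cite a source where the argument is purely combinatorial and hence valid over any commutative ring.
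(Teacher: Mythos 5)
Your proof is correct and takes exactly the route the paper intends: the paper gives no proof beyond the remark that the lemma ``can be proven using the monomial basis for symmetric polynomials (see e.g.\ \ldots MacDonald),'' and your argument is precisely that fleshed out --- expand in the $m_\lambda$ basis, use linearity to rule out any partition with a part $\ge 2$, and identify $m_{(1^k)}=s_k$.
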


\begin{proof}[Proof of Theorem~\ref{KTF2r}]

Consider the base case $r=0$. In this case the claim of the theorem is
immediate since $F_0 = \pt$. Now suppose by induction that the claim
of the theorem holds for $r-1$. We wish to prove the claim holds for
$r$.

Due to the quadratic relation $L_j^2 = vL_j-1$ 
in Theorem~\ref{theorem:P1summary}, any symmetric polynomial in the
$L_j$ 
in $K^*_T\bigl((\PP^1)^{2r}\bigr)$ can be
expressed as a symmetric polynomial which is linear in each $L_j$. By
Lemma~\ref{lemma:macdonald} it also follows that it can be written as
an $R(T)$-linear combination of $s_0, \ldots, s_{2r}$.
It follows that
\begin{equation*}
\begin{split}
\bigl(K^*_T((\PP^1)^{2r})\bigr)^{S_{2r}} & = \mbox{the $K^*_T(\pt)$-subalgebra
of $K^*_T\bigl((\PP^1)^{2r}\bigr)$ generated
by $ s_1,\ldots,s_{2r}$} \\ 
&= \mbox{the $K^*_T(\pt)$-submodule
of $K^*_T\bigl((\PP^1)^{2r}\bigr)$ generated
by $s_0,\ldots,s_{2r}$}. \\ 
\end{split} 
\end{equation*}

Lemma~\ref{lemma:inclusion} shows $$\Phi_{2r}^*\bigl(K^*_T(F_{2r})\bigr)
\subseteq (K^*_T((\PP^1)^{2r}))^{S_{2r}},$$ and by Lemma~\ref{lemma:sufficient}, the
claim proven in Proposition~\ref{proposition:main technical} suffices
to show the containment in the other direction. Since $\Phi_{2r}^*$ is
injective by Proposition~\ref{proposition:injective in KG}, the claim of the
theorem now follows. 

\end{proof}

We also record the $G$-equivariant version. 

\begin{theorem}\label{KF2r}
Let $G=SU(2)$. Let $\Phi_{2r}:
(\PP^1)^{2r} \to F_{2r}$ be the map given in
Definition~\ref{definition:Phi 2r}. Then 
\begin{align*}
K^*_G(F_{2r})\cong\Phi_{2r}^*\bigl(K^*_G(F_{2r})\bigr)
&=\{\mbox{symmetric polynomials in~$K^*_G\bigl((\PP^1)^{2r}\bigr)$}\}\cr
&= \mbox{the $K^*_G(\pt)$-subalgebra
of $K^*_G\bigl((\PP^1)^{2r}\bigr)$ generated
by $s_1,\ldots,s_{2r}$}\cr
&= \mbox{the $K^*_G(\pt)$-submodule
of $K^*_G\bigl((\PP^1)^{2r}\bigr)$ generated
by $s_0,\ldots,s_{2r}$}.\cr
\end{align*}
\end{theorem}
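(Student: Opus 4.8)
The plan is to deduce Theorem~\ref{KF2r} from its $T$-equivariant counterpart Theorem~\ref{KTF2r} by passing to Weyl invariants. Two inputs make this work. First, \cite{HJS12} shows that the forgetful map $K^*_G(F_{2r}) \to K^*_T(F_{2r})$ is an isomorphism onto $K^*_T(F_{2r})^W$. Second, the presentations in Theorem~\ref{theorem:P1summary} show that the forgetful map $K^*_G((\PP^1)^{2r}) \to K^*_T((\PP^1)^{2r})$ is injective with image $\bigl(K^*_T((\PP^1)^{2r})\bigr)^W$: indeed both rings are free over $R(G)$, respectively $R(T)$, on the monomial basis $\{\prod_j L_j^{\epsilon_j} : \epsilon_j \in \{0,1\}\}$, each monomial is $W$-fixed because every $L_j$ is a $G$-bundle, and $R(T)^W = R(G)$.

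First I would record the commutative square comparing $\Phi_{2r}^*$ over $G$ with $\Phi_{2r}^*$ over $T$ through these forgetful maps; it commutes by naturality of the forgetful functor applied to the $G$-map $\Phi_{2r}$, and the same naturality shows that $\Phi_{2r}^*: K^*_T(F_{2r}) \to K^*_T((\PP^1)^{2r})$ is $W$-equivariant, with $W = N_G(T)/T$ acting as usual on $T$-equivariant $K$-theory of a $G$-space. Since $\Phi_{2r}^*$ is injective (Proposition~\ref{proposition:injective in KG}), $W$-equivariance gives $\Phi_{2r}^*\bigl(K^*_T(F_{2r})^W\bigr) = \Phi_{2r}^*\bigl(K^*_T(F_{2r})\bigr)^W$, and by Theorem~\ref{KTF2r} the latter is the set of $W$-invariants of the $R(T)$-submodule of $K^*_T((\PP^1)^{2r})$ spanned by $s_0, \ldots, s_{2r}$. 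A short chase of the square --- all four maps are injective, and the left-hand forgetful map is the inclusion $K^*_T(F_{2r})^W \hookrightarrow K^*_T(F_{2r})$ --- then identifies $\Phi_{2r}^*\bigl(K^*_G(F_{2r})\bigr)$, inside $K^*_G((\PP^1)^{2r}) \cong \bigl(K^*_T((\PP^1)^{2r})\bigr)^W$, with this same set of $W$-invariants.

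To finish, I would show that the $W$-invariants of the $R(T)$-span of $s_0, \ldots, s_{2r}$ equal the $R(G)$-span of $s_0, \ldots, s_{2r}$, which in turn equals the $S_{2r}$-invariants, equivalently the $R(G)$-subalgebra generated by $s_1, \ldots, s_{2r}$, exactly as in the $T$-case, using the relations $L_j^2 = vL_j - 1$ and Lemma~\ref{lemma:macdonald}. The key point is that $\{s_0, \ldots, s_{2r}\}$ is a free $R(T)$-basis of the submodule it spans; granting this, and since each $s_j$ is $W$-fixed, an element $\sum_j a_j s_j$ with $a_j \in R(T)$ is $W$-invariant iff each $a_j$ lies in $R(T)^W = R(G)$, which is the desired identification. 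This freeness is the one point requiring a genuine argument, and it follows either from Theorem~\ref{KTF2r} together with the rank computation for $K^*_T(F_{2r})$ in \cite{HJS12}, or directly from the leading-term formula $\chT(s_j) = p(\bar{t})^j\,\bar{s}_j + (\text{lower order in }\bar{s}_0, \ldots, \bar{s}_{j-1})$ obtained in Section~\ref{subsection:M and N}, since $\chT$ is injective, $p(\bar{t})$ is a non-zero-divisor, and the $\bar{s}_j$ are $\Q[\bar{t}]$-independent (established within the proof of Theorem~\ref{HTF2rQ}). Apart from this freeness input, the whole argument is formal manipulation with the Weyl group action, which is why we can afford to be brief.
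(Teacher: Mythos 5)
Your proposal is correct and follows essentially the same route as the paper: deduce the $G$-equivariant statement from Theorem~\ref{KTF2r} by passing to Weyl invariants, using $K^*_G(F_{2r})\cong\bigl(K^*_T(F_{2r})\bigr)^W$ from \cite{HJS12} and $K^*_G\bigl((\PP^1)^{2r}\bigr)\cong\bigl(K^*_T\bigl((\PP^1)^{2r}\bigr)\bigr)^W$ from Section~\ref{subsection:BottP}. The extra point you flag explicitly --- that $\{s_0,\dots,s_{2r}\}$ is a free $R(T)$-module basis of the submodule it spans, so that $W$-invariance of an $R(T)$-combination forces the coefficients into $R(T)^W=R(G)$ --- is exactly the step the paper leaves implicit (and spells out only in the cohomology analogue, Theorem~\ref{HFG2rQ}), and your justification of it via the monomial basis $\{\prod_j L_j^{\epsilon_j}\}$ of Theorem~\ref{theorem:P1summary} is sound.
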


\begin{proof}
Since $\Phi_{2r}^*$ is an algebra injection, the theorem is 
equivalent to the statement that
$\Phi^*_{2r}\bigl(K^*_G(F_{2r})\bigr)$ is the subalgebra
of $K^*_G\bigl((\PP^1)^n\bigr)$ generated by $s_0, s_1,\ldots,s_{2r}$.
We know that 
$K^*_G(F_{2r})\cong\bigl(K^*_T(F_{2r})\bigr)^W$ from \cite{HJS12}. 
In Section~\ref{subsection:BottP} we showed that
$$K^*_G\bigl((\PP^1)^n\bigr)\cong \Bigl(K^*_T\bigl((\PP^1)^n\bigr)\Bigr)^W.$$
Therefore,  by taking Weyl-invariants, we conclude that
$\Phi^*_{2r}\bigl(K^*_G(F_{2r})\bigr)$ is the submodule
of $K^*_G\bigl((\PP^1)^{2r}\bigr)$ generated by $s_0,\ldots,s_{2r}$. The
other statements follow.
\end{proof}

\section{Proof of the main theorem}\label{section:proof of main}

The following is now an immediate consequence of the previous results.

\begin{theorem}\label{maintheorem}
Let $G=SU(2)$ and let $\Omega G$ be the space of (continuous) based
loops in $G$, equipped with the natural $G$-action by pointwise
conjugation. Then 
$$K^*_G(\Omega G)
=\varprojlim_r\, 
\Bigl(K^*_G\bigl((\PP^1)^{2r}\bigr)\Bigr)^{S_{2r}}
=\varprojlim_r\, 
\{\mbox{symmetric polynomials in~$K^*_G\bigl((\PP^1)^{2r}\bigr)$}\}
$$
where 
$K^*_G\bigl((\PP^1)^{2r}\bigr)\cong 
R(G)[L_1, \dots, L_{2r}]/I,$
and $I$ is the ideal generated by $\{L_j^2-vL_j+1\}_{j=1}^n$.
Here $R(G)$ is the representation ring of $G$,
$v$ is the standard representation of $G = SU(2)$ on~${\CC}^2 $ and $L_j$ is
the pullback of either the canonical line bundle over the
$j$th factor of~$(\PP^1)^{2r}$ or its inverse, depending on~$j$ (see
Definition \ref{definition:L_j and barL_j}). 
The maps in this inverse system are given by
$$i^* (s_j) = \begin{cases}
s'_0 &\mbox{if $j=0$};\cr
s'_1 +v s'_{0} &\mbox{if $j=1$};\cr
 s'_j +v s'_{j-1} + s'_{j-2}
& \mbox{if $1 < j \le 2r-2$};\cr
  v s'_{2r-2} + s'_{2r-3}
& \mbox{if $ j = 2r-1$};\cr
       s'_{2r-2}
& \mbox{if $    j =2r$},\cr
\end{cases}
$$
where $s_j$ and $s'_j$ are respectively
the $j$th elementary symmetric polynomials in
$\{L_1,\ldots, L_{2r}\}$ and 
$\{L'_1,\ldots, L'_{2r-1}\}$.
\end{theorem}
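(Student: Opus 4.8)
The plan is to assemble Theorem~\ref{maintheorem} directly from the finite-level computation in Theorem~\ref{KF2r} together with the inverse-limit identification of $K^*_G(\Omega G)$ established in \cite{HJS12}. The first step is to recall from \cite{HJS12} that, for the $G$-invariant filtration $F_0 \subseteq F_2 \subseteq \cdots$ of $\Omega_{\poly} G$ (which is $G$-homotopy equivalent to $\Omega G$ by the remark following Proposition~\ref{prop:filtration quotient as Thom space} and \cite[Theorems 5.3--4]{HJS12}), the natural maps induce an isomorphism
\[
K^*_G(\Omega G) \cong \varprojlim_r K^*_G(F_{2r}),
\]
where the structure maps of the inverse system are the restriction homomorphisms $i^*\colon K^*_G(F_{2r}) \to K^*_G(F_{2r-2})$ induced by the inclusions $F_{2r-2} \hookrightarrow F_{2r}$. (One should note the relevant $\varprojlim^1$ term vanishes, since each $K^*_G(F_{2r})$ is a finitely generated free $R(G)$-module and the system is Mittag-Leffler — indeed the maps are surjective by the short exact sequence in diagram~\eqref{d22} — so the inverse limit computes $K^*_G(\Omega G)$ on the nose.)

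Next I would invoke Theorem~\ref{KF2r}, which identifies $K^*_G(F_{2r})$ via the injection $\Phi_{2r}^*$ with the symmetric subring $\bigl(K^*_G((\PP^1)^{2r})\bigr)^{S_{2r}}$, the latter being the $R(G)$-submodule generated by $s_0,\ldots,s_{2r}$, where $K^*_G((\PP^1)^{2r}) \cong R(G)[L_1,\ldots,L_{2r}]/I$ with $I = \langle L_j^2 - vL_j + 1\rangle$ by Theorem~\ref{theorem:P1summary}. The key compatibility to check is that the square
\begin{diagram}
K^*_G(F_{2r}) & \rTo^{\Phi_{2r}^*} & \bigl(K^*_G((\PP^1)^{2r})\bigr)^{S_{2r}} \cr
\dTo^{i^*} & & \dTo_{i^*} \cr
K^*_G(F_{2r-2}) & \rTo^{\Phi_{2(r-1)}^*} & \bigl(K^*_G((\PP^1)^{2r-2})\bigr)^{S_{2r-2}} \cr
\end{diagram}
commutes, where the right vertical $i^*$ is the map induced by the inclusion $(\PP^1)^{2r-2} \hookrightarrow (\PP^1)^{2r}$, $(x_1,\ldots,x_{2r-2}) \mapsto (x_1,\ldots,x_{2r-2},[T],[T])$. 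This is precisely the commutativity of the diagram in Lemma~\ref{lemma:induction on r geometric}, passed to $K$-theory; the only subtlety is that that diagram is not $G$-equivariant (Remark~\ref{remark:not G diagram}), but the vertical maps do exist in $K^*_G$ for the spaces in the bottom square because the bottom inclusion $(\PP^1)^{2r-2}\hookrightarrow(\PP^1)^{2r}$ sends the last two coordinates to a $G$-fixed point $([*],[*])$ — wait, it sends them to $([T],[T])$, which is not $G$-fixed, so strictly one argues in $K^*_T$ and then passes to Weyl invariants as in the proof of Theorem~\ref{KF2r}, using $K^*_G(-) \cong K^*_T(-)^W$ for all the spaces involved. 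Granting this, the inverse system $\{K^*_G(F_{2r}), i^*\}$ is isomorphic to the inverse system $\{(K^*_G((\PP^1)^{2r}))^{S_{2r}}, i^*\}$.

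Finally I would translate the structure map $i^*$ into the stated formula on elementary symmetric polynomials. The formula for $i^*(L_j)$ — namely $L_j \mapsto L_j'$ for $j \le 2r-2$, $L_{2r-1}\mapsto b^{-1}$, $L_{2r}\mapsto b$ at the level of $K^*_T$ — was derived in~\eqref{eq:iota star of L_j}, and substituting into the expansion
\begin{align*}
s_j(L_1,\ldots,L_{2r}) &= s_{j-2}(L_1,\ldots,L_{2r-2})L_{2r-1}L_{2r} + s_{j-1}(L_1,\ldots,L_{2r-2})L_{2r-1} \cr
&\quad + s_{j-1}(L_1,\ldots,L_{2r-2})L_{2r} + s_j(L_1,\ldots,L_{2r-2})
\end{align*}
and using $b \cdot b^{-1} = 1$ and $b + b^{-1} = v$ yields exactly~\eqref{eq:iota star of s_j}, which is the displayed case distinction in the theorem; since the resulting expression is Weyl-invariant and lies in $R(G)$, it descends to the $G$-equivariant setting. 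Taking $\varprojlim_r$ of the isomorphic systems then gives the two displayed descriptions of $K^*_G(\Omega G)$, completing the proof. I expect the main obstacle to be the bookkeeping in the last step — verifying that the $i^*(s_j)$ formula is correct in all five boundary cases (especially the $j=2r-1$ and $j=2r$ edge cases, where terms drop out) — together with making the "pass to $K^*_T$, then take $W$-invariants" argument for the commuting square fully rigorous given that Lemma~\ref{lemma:induction on r geometric} lives outside the $G$-equivariant category.
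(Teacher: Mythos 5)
Your proposal is correct and follows essentially the same route as the paper, which presents Theorem~\ref{maintheorem} as an immediate consequence of Theorem~\ref{KF2r}, the inverse-limit identification $K^*_G(\Omega G) \cong \varprojlim_r K^*_G(F_{2r})$ from \cite{HJS12}, and the structure-map formula~\eqref{eq:iota star of s_j}. The extra care you take over the non-$G$-equivariance of the fiber inclusion (arguing in $K^*_T$ and passing to Weyl invariants, noting that the resulting formula only involves $v = b + b^{-1}$) and the Mittag--Leffler condition are exactly the details the paper leaves implicit.
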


\def\cprime{$'$}

\end{document}